\pgfplotsset{compat=1.15}
\newif\ifMAKEPICS
\DeclareExpandableDocumentCommand{\convertlen}{ O{cm} m }
{
	\dim_to_decimal_in_unit:nn { #2 } { 1 #1 } cm
}
\newtheorem{Algorithm}{Algorithm}
\newtheorem{Lemma}{Lemma} 
\newtheorem{Theorem}{Theorem} 
\newtheorem{Remark}{Remark}
\newtheorem{Assumption}{Assumption}
\newtheorem{Definition}{Definition}
\newtheorem{Proposition}{Proposition}
\newtheorem{Corollary}{Corollary}
\begin{document}
\title{A Posteriori Single- and Multi-Goal Error Control and Adaptivity
for Partial Differential Equations
}

\author[1,4]{{B. Endtmayer}}
\author[2,5]{{U. Langer}}
\author[3]{{T. Richter}}
\author[2]{{A. Schafelner}}
\author[1,4]{{T. Wick}}

\affil[1]{Leibniz University Hannover, Institute of Applied Mathematics,
            Germany}
\affil[2]{Johannes Kepler University Linz, Institute of Numerical Mathematics,
                Austria}
\affil[3]{Otto von Guericke University Magdeburg, Institute of Analysis and Numerics,
            Germany}
\affil[4]{Leibniz University Hannover, Cluster of Excellence PhoenixD,
			Germany}
\affil[5]{Austrian Academy of Sciences, RICAM, 
                Linz, Austria}
	
\date{}

\maketitle

\begin{abstract}
This work reviews goal-oriented a posteriori error control, adaptivity and solver control 
for finite element approximations to boundary and initial-boundary value problems for 
stationary and non-stationary partial differential equations,
respectively. In particular,
coupled field problems with different physics may require simultaneously the accurate evaluation of several quantities of interest, which is achieved 
with multi-goal oriented error control. Sensitivity measures are obtained by solving an adjoint problem. Error localization is achieved with the help of 
a partition-of-unity. We also review and extend theoretical results for 
efficiency and reliability by employing a saturation assumption. The resulting 
adaptive algorithms allow to balance discretization and non-linear iteration errors, and are demonstrated for four applications: 
Poisson's problem,
non-linear elliptic boundary value problems,
stationary incompressible Navier-Stokes equations,
and regularized parabolic $p$-Laplace initial-boundary value problems.
Therein, different finite element discretizations in two different software libraries are utilized,
which are partially accompanied with open-source implementations on GitHub.
\\
\textbf{Keywords:}  
Goal-oriented error control; 
multi-goal error control; 
adjoint problems; 
dual-weighted residual method; 
partial differential equations; 
adaptive finite element methods
\end{abstract}

%
\section{Introduction}
\label{Sec:Introduction}
This work is devoted to a goal-oriented a posteriori error control of single and multiple 
quantities of interest (QoI) and Adaptive Finite Element Methods (AFEM)
for solving stationary and non-stationary, linear and non-linear partial differential equations (PDEs)
and systems of PDEs. AFEM were certainly inspired by 
the pioneer work \cite{ELRWS:BabuskaRheinboldt:1978SINUM} of Babu\v{s}ka and Rheinboldt. Further important studies on a posteriori error controlled 
adaptive finite element methods are \cite{Doerfler:1996a,CaVer99,MR1770058,MR2050077,MR2324418} to name a few.
The first comprehensive monograph \cite{Verfuerth:1996a} by Verf\"urth was another AFEM milestone. 
We refer to the survey papers
\cite{ErikssonEstepHansboJohnson1995},
\cite{BeRa01},
\cite{ChamoinLegoll2023},
and to the monographs 
\cite{AinsworthOden:2000},
\cite{ELRWS:BabuskaStrouboulis:2001Book},
\cite{BaRa03},
\cite{ELRWS:Han:2004Book},
\cite{NeittaanmaekiRepin:2004a},
\cite{ELRWS:Repin:2008Book},
\cite{ErEstHaJoh09},
\cite{ReSau20}
for an overview of a posteriori error estimates and adaptive finite element techniques.

The governing discretization in this work is the finite element method 
\cite{ELRWS:BrennerScott:2008_Book,Braess,Ciarlet:2002:FEM:581834,Hu00_book,CaOd84,ErnGuer2021all_three}, but isogeometric 
analysis could be used in a similar fashion \cite{HughesCottrellBazilevs:2005a,cottrellHughesIGA2009,IGA}. We consider stationary settings as well as time-dependent cases, modeled as space-time 
problems and solved by space-time discretization methods;~see, e.g., \cite{ELRWS:Johnson:2009Book,ELRWS:HughesHulbert:1988CMAME,Thomee1997,Steinbach:2015a,NoSauWie17,SteinbachYang:2019a,LaStein19,TezTa19,LaNoSauWie22} 
and the references therein.

In the following, we review the important steps regarding the development 
of goal-oriented techniques. 
Starting with the Acta Numerica paper \cite{ErikssonEstepHansboJohnson1995} 
{by Eriksson, Estep, Hansbo and Johnson}
as a survey on adaptive methods, Becker and Rannacher proposed an automated procedure for self-controlled 
adaptivity with the dual-weighted residual method (DWR) 
in their Acta Numerica paper \cite{BeRa01}. This was followed by the 
book \cite{BaRa03}. Further studies of the early developments include 
\cite{NoSchmiSiebVeeser2006,GilPie2004,PruOdWeBaBo03,GilesSuli2002,BraackErn02,Giles2001,GilPie2000,OdPru99,RaSu97,BeRa96}. An important step towards time-dependent problems 
in space-time formulations with full space-time adaptivity was the work by 
Schmich and Vexler \cite{SchVe08}.
These previously mentioned studies concentrated on single goal-oriented error estimation. 
In the year $2003$, Hartmann and Houston proposed 
goal-oriented a posteriori error estimation for multiple quantities 
of interest \cite{HaHou03}, followed by Hartmann's paper \cite{Ha08} and shortly later 
\cite{PARDO20101953}.
We started ourselves with multigoal-oriented error estimation with \cite{EnWi17,EnLaWi18}.

Conceptional developments on goal-oriented error estimation include 
a safe-guarded method established in \cite{NochettoVeeserVerani2009}
and guaranteed bounds derived in \cite{AinRan2012}.
Abstract analyses and reformulations 
were presented in \cite{FeiPraeZee16,KerPruChaLaf2017}.
First convergence rates of goal-oriented error estimation were obtained in \cite{MoSte09},
weighted marking \cite{Becker:2011:WMG:2340478.2340490},
bounding techniques for goal-oriented error estimates for elliptic problems \cite{LadPleCha2013},
goal oriented flux reconstruction \cite{MozPru2015},
goal-oriented error estimation for the fractional-step-theta scheme \cite{MeiRi14},
a partition-of-unity localization including effectivity estimates for single goal functionals 
\cite{RiWi15_dwr},
partition-of-unity localization for multiple goal functionals \cite{EnWi17},
space-time partition-of-unity localization \cite{endtmayer2023goal,ThiWi24},
equilibrated flux \cite{BrPillSb2009}, 
linearization errors \cite{granzow2023linearization}.
Theoretical work showing optimality of adaptive algorithms 
was summarized in four axioms of adaptivity in \cite{MR3170325}
and dates back to residual-based a posteriori error estimates 
in \cite{Doerfler:1996a}. First rigorous convergence results for 
goal-oriented estimators go back to \cite{MoSte09,Becker:2011:WMG:2340478.2340490}, then \cite{FeiPraeZee16,HoPoZhu2015,HolPo2016}, and recent findings 
are \cite{MR4235813,becker2022rate,becker2023goal}.
We also mention our own prior work on such theoretical advancements 
that include effective estimates with common upper bounds of the dual-weighted 
residual estimator and the error indicators \cite{RiWi15_dwr}.
Efficiency and reliability estimates for the dual-weighted residual method
using a saturation assumption were shown in \cite{EnLaWi20}, and 
smart algorithms switching between solving high-order adjoint 
problems or using interpolations were derived in \cite{endtmayer2021reliability}.

A key step in the development of the dual-weighted residual 
method was the introduction of a partition-of-unity (PU) localization
by Richter and Wick \cite{RiWi15_dwr}; a prototype open-source implementation 
based on deal.II \cite{BangerthHartmannKanschat2007,dealII90,deal2020} 
can be found on GitHub\footnote{\url{https://github.com/tommeswick/PU-DWR-Poisson}}. 
For the general 
idea of the PU-FEM, we refer the reader to \cite{BabMel96,BabMel97}.
The PU-DWR method opened the way for addressing 
goal-oriented error control in practical applications of 
time-dependent, non-linear, coupled PDEs and multiphysics applications in a much more convenient way.
Along with the PU-DWR method, 
an indicator index was introduced, which measures the quality of error indicators.
The PU-DWR method was extended to multigoal-oriented error control 
in \cite{EnWi17}. 
The extension of PU-DWR to space-time goal-oriented error estimation on fully
unstructured simplicial decompositions of the space-time cylinder was done 
in \cite{endtmayer2023goal} and a $cGdG$ (continuous Galerkin in space, discontinuous 
Galerkin in time) discretization was done for parabolic problems in \cite{ThiWi24}, including 
open-source developments in Zenodo \cite{thiele_2024_10641119,thiele_2024_10641104} 
and GitHub\footnote{\url{https://github.com/jpthiele/pu-dwr-diffusion}},\footnote{\url{https://github.com/jpthiele/pu-dwr-combustion}}, 
and finally, space-time PU-DWR for the incompressible Navier-Stokes equations in \cite{RoThiKoeWi23}, again 
with codes on GitHub\footnote{\url{https://github.com/mathmerizing/dwr-instatfluid}}.

Single- and multigoal-oriented error control and adaptivity 
has found numerous applications.
These include Poisson's problem \cite{BeckerRannacher1995,BeRa96,OdPru99,EnWi17},
stationary linear elasticity \cite{RaSu97},
space-time adaptivity for parabolic equations \cite{SchVe08},
space-time elasticity and the wave equation \cite{Rade09,BaGeRa10},
multi-rate discretizations of coupled parabolic/hyperbolic problems \cite{SoszynskaRichter2021},
$p$-Laplace problems \cite{EnLaWi18,endtmayer2021reliability,endtmayer2023goal, RanVi2013},
elasto-plasticity \cite{RannacheSuttmaeier1998,RaSu99,RaSu00} and visco-plasticity \cite{MeRi2020},
incompressible flow \cite{becker2002optimal,BraaRich2006a,Besier2009,BeRa12,endtmayer2021hierarchical,EnLaWi20,endtmayer2021reliability,RoThiKoeWi23},
transport proplems \cite{KuzminKorotov2010},
transport problems with coupled flow \cite{bause2021flexible,bruchhauser2022goal,bruchhauser2023cost},
Boussinesq equations and coupled flow with temperature \cite{BeEnLaWi2021a,BeuDeEndtMoWi24,endtmayerPAMM2022},
reactive flows \cite{B59,Ri05,B3}, 
uncertain inputs \cite{bespalov2019goal},
Maxwell's equations \cite{INGELSTROM20032597},
elliptic eigenvalue problems \cite{Heuveline2001,cockburn2022adjoint}, 
modeling errors \cite{OdPr02,BraackErn02,Rade19},
applications to polygonal meshes \cite{WeiWi18},
conforming and nonconforming approximations with inexact solvers \cite{MaVohYou20},
anisotropic mesh refinement \cite{Richter10},
heterogeneous multiscale problems \cite{chung2016goal,MaiRa16,MaiRa18,LautschRichter2020,Dominguez2023},
fluid-structure interaction \cite{GraeBa06,Ri11,Du06,Du07,DuRaRi09,Wi11_phd,Wi2012,FickBrummelenZee2010,ZeeBrummelenAkkermanBorst2011,Ri17_fsi,FaiWi18,ahuja2022multigoal},
phase-field slits and fracture \cite{Wi16_dwr_pff,Wi20,Wi21_CMAM},
sea ice simulations \cite{MeRi2020},
optimal control \cite{BeKaRa00,BeBaMeRa07,MeiVe07,Me08,HiHo08a,VeWo08,HiHo10,RannancherVexler2010,HiHiKa18,EnLaNeiWoWi2020},
obstacle/contact problems and variational inequalities \cite{BlumSuttmeier99,BlumSu00,blum2003posteriori,SchroeRade2011,rademacher2015dual,Su08,Wi20},
finite cell methods \cite{StRaSchroe19,DiStolfo2022},
balancing discretization and iteration errors \cite{MeiRaVih109,RaWeWo10,RanVi2013,EnLaWi18,dolejvsi2021goal,dolejvsi2023goal},
financial mathematics \cite{GoRaWo15},
goal-oriented model order reduction \cite{Meyer2003,fischer2024adaptive,fischer2024dwr,KerChaLaPr19,ChaLe21}, 
neural network enhanced dual-weighted residual technologies \cite{BREVIS2021186,minakowski2021error,Roth2022},
adaptive multiscale predictive modeling \cite{oden_2018}, and 
open-source software developments for 
goal-oriented error estimation 
\cite{DOpElib}\footnote{\url{https://github.com/winnifried/dopelib}}, 
\cite{KoeBruBau2019a}\footnote{\url{https://github.com/dtm-project/dwr-diffusion}}, 
\cite{ThiWi24,Thi24_phd}\footnote{\url{https://github.com/jpthiele/pu-dwr-diffusion}},\footnote{\url{https://github.com/jpthiele/pu-dwr-combustion}}.

In this work, we undertake an extensive review on single- and multigoal-oriented a posteriori 
error estimation and adaptivity. Our results include classical proofs for 
educational purposes. But we also add new findings: first, we establish 
efficiency and reliability estimates 
with only one saturation assumption in which the 
strengthened condition from \cite{EnLaWi20} 
is no longer necessary.
Second, we provide details on non-standard discretizations such 
as non-consistent and non-conformal methods. Third, 
the code of the basic PU-DWR method \cite{RiWi15_dwr} is published open-source 
on GitHub. Fourth, in extension of \cite{endtmayer2023goal}, the space-time 
PU-DWR method is investigated for multiple goals with singularities.
Certain results are illustrated with the elliptic model problem. Our main theoretical results 
include estimates and algorithms for balancing discretization and non-linear iteration errors.
Our algorithms are illustrated with newly designed numerical tests, not published in the 
literature elsewhere, that have both research and educational character in order 
to explain the mechanisms of goal-oriented error control and mesh adaptivity.
These results include stationary, non-linear situations, incompressible flow, and 
a space-time $p$-Laplace problem.

The outline of this work is as follows. 
In Section~\ref{sec_notation_abstract_setting}, the notation and abstract setting are introduced. 
Then, in Section~\ref{Sec:single-goal}, single goal-oriented error control is discussed. 
Section~\ref{Sec:Nonstandard} focuses 
on error estimation for non-standard discretizations.
Next, in Section~\ref{Sec:Multigoal}, multigoal-oriented error estimators are explained. 
In Section~\ref{sec_applications}, three applications are considered, and computationally analyzed. 
The work is concluded in Section~\ref{Sec:Conclusions}, 
and some current research directions are outlined.

%
%
\section{Notation, Abstract Setting, Finite Element Discretization}
\label{sec_notation_abstract_setting}
Throughout this work, let $d$ be the space dimension.
We denote by $Q:=\Omega\times I \subset \mathbb{R}^{d+1}$ the space-time domain (cylinder), where $\Omega \subset \mathbb{R}^{d}$
is the bounded and Lipschitz spatial domain with the boundary $\Gamma = \partial \Omega$, 
and $I:=(0,T)$ denotes the temporal domain (time interval). 
Here, $T>0$ is the end time value.
Furthermore, we use the standard notations for Lebesgue, Sobolev, and Bochner spaces like 
$L_p(\Omega)$, 
$W_p^k(\Omega),\mathring{W}_p^k(\Omega), H^k(\Omega)=W_2^k(\Omega), H^k_0(\Omega)=\mathring{W}_2^k(\Omega)$,
$L_p(0,T;\mathring{W}_p^1(\Omega))$ etc.; see, e.g., 
\cite{Braess,ELRWS:BrennerScott:2008_Book,Zeidler:1990a}.

\subsection{Abstract Setting}\label{subsec: Abstract Setting}
Let $U$ and $V$ be reflexive Banach spaces
with their dual spaces $U^*$ and $V^*$, respectively.
We consider the abstract operator equation:
Find $u \in U$ such that
\begin{equation}\label{eq: General Modelproblem}
	\mathcal{A}(u)=0 \qquad \text{in }V^*,
\end{equation}
where $\mathcal{A}:U \mapsto V^*$ represents a non-linear partial differential operator. 

Let us assume $U_h$ and $V_h$ are conforming discrete spaces, i.e.\ we have the properties $U_h \subseteq U$ and $V_h \subseteq V$ and $\operatorname{dim}(U_h) < \infty$ and $\operatorname{dim}(V_h)< \infty$. 
With this, we can perform a Galerkin-Petrov discretization of 
the operator equation \eqref{eq: General Modelproblem} yielding the 
discrete problem: Find $u_h \in U_h$ such that
\begin{equation}\label{eq: discrete primal}
	\mathcal{A}(u_h)(v_h)=0 \qquad \forall v_h \in V_h.
\end{equation}
Once a basis is chosen, this leads to a non-linear system of finite  equations.
Afterwards, this system of non-linear equations can be solved with some non-linear solver, e.g.\ 
Picard's iteration or Newton's method \cite{Deuflhard2011}.

\subsection{Finite Element Discretization}
One possible discretization technique is the Finite Element Method (FEM) \cite{ELRWS:BrennerScott:2008_Book,Braess,Ciarlet:2002:FEM:581834,Hu00_book,CaOd84,ErnGuer2021all_three}.
In this section, we assume that $\Omega$ is a polygonal Lipschitz domain.
As an example for the conforming discrete subspaces from Section~\ref{subsec: Abstract Setting}, we provide two possible finite element discretizations.

\subsubsection{Finite Elements on Simplices $P_k$}
We can decompose our domain into {shape-regular} simplicial elements $K \in \mathcal{T}_h$, where 
$\bigcup_{K \in \mathcal{T}_h}\overline{K}=\overline{\Omega}$ and for all $K,K' \in \mathcal{T}_h:$  $K \cap K' = \emptyset$ if and only if $K\not=K'$. 
Let $\mathbb{P}_k(\hat{K})$ be the space of polynomials of  total degree $k$ on the reference domain $\hat{K}$ and 
\begin{equation*}
	{
	P_k:=\{v_h \in \mathcal{C}(\Omega): v_h|_K = v_h(\varphi_K(\cdot)) \in \mathbb{P}_k(\hat{K})\quad \forall K \in \mathcal{T}_h \}
	}
\end{equation*}
the finite element space of continuous finite elements of degree $k$ on  $\mathcal{T}_h$,
where $\varphi_K: \hat{K} \rightarrow K$ is a regular mapping of the reference element $\hat{K}$ onto $K$,
e.g., a affine-linear or isoparametric mapping.
For more information, we refer to \cite{ELRWS:BrennerScott:2008_Book,Braess,ErnGuer2021all_three}.

\subsubsection{Finite Elements on Hypercubes $Q_k$}
\label{sec_FEM_Qk}
Another possible way is to decompose our domain into hypercubal elements $K \in \mathcal{T}_h$, where 
$\bigcup_{K \in \mathcal{T}_h}\overline{K}=\overline{\Omega}$ and for all $K,K' \in \mathcal{T}_h:$  $K \cap K' = \emptyset$ if and only if $K\not=K'$. 
Let $\mathbb{Q}_k(\hat{K})$ be the tensor product space of polynomials of degree $k$ on the reference domain $\hat{K}$ and 
	$$Q_k:=\{v_h \in \mathcal{C}(\Omega): v_h|_K = v_h(\varphi_K(\cdot)) \in \mathbb{Q}_k(\hat{K})\quad \forall K \in \mathcal{T}_h \}$$
the finite element space of continuous finite elements of degree $k$ on  $\mathcal{T}_h$, 
where $\varphi_K: \hat{K} \rightarrow K$ is a regular mapping of the reference element $\hat{K}$ onto $K$,
e.g., a multi-linear 
or isoparametric mapping.
We refer to \cite{ELRWS:BrennerScott:2008_Book,Braess,ErnGuer2021all_three} for more information.

To facilitate adaptive mesh
refinement and to avoid connecting elements, we use the concept of 
hanging nodes. Elements are allowed to have nodes that lie on the midpoints
of the faces or edges of neighboring cells. In our implementations, at most, one hanging node 
is allowed on each face or edge. In three dimensions, this concept is 
generalized to subplanes and faces because we must deal with
two types of lower manifolds. To enforce global
continuity (i.e., global conformity), the degrees of freedom 
located on the interface between different refinement levels have to satisfy
additional constraints. They are determined by interpolation of neighboring 
degrees of freedom. Therefore, hanging nodes do not carry any degrees of freedom.
For more details on this, we refer to \cite{CaOd84}.

\section{Single-Goal Oriented Error Control}\label{Sec:single-goal}
In this section, we first provide some background 
information and we explain the need for goal-oriented error estimation. 
Moreover, we also introduce and explain adjoint-based error estimation employing the so-called 
dual-weighted residual (DWR) method.

\subsection{Motivation and Preliminaries}
First, we address the purpose to construct error estimators 
and adaptive schemes. Error estimators allow to determine 
approximately the error between approximations and the (unknown) true solution 
of a given problem statement. One distinguishes:
\begin{itemize} 
\item a priori estimates (estimated before the actual approximate 
solution is known);
\item a posteriori error estimates after the approximate 
solution has been computed.
\end{itemize}
Based on these error estimations, the error may be further localized in order 
to `refine' algorithms. Mostly, mesh refinement in space and/or time is of interest,
but also model errors or iteration errors can be controlled and balanced
\cite{AinsworthOden:2000,ReSau20,BeRa01,BaRa03,ErikssonEstepHansboJohnson1995}.
These localized errors can be used to adaptively steer the algorithm, by enhancing 
the accuracy of the approximate solution, while keeping the computational cost reasonable.

In anticipation of practical realization and optimality of 
adaptive procedures, 
the basic algorithm reads:
\begin{Algorithm}[AFEM - adaptive finite elements]\index{AFEM! Basic algorithm}
\index{Adaptive finite elements}
\index{Finite elements! Adaptivity}
The basic algorithm for AFEM reads:
\begin{enumerate}
\item \textbf{Solve} the differential equation on the current mesh ${\cal T}$;
\item \textbf{Estimate} the error via a posteriori error estimation to obtain $\eta$;
\item \textbf{Mark} the elements by localizing the error estimator;
\item \textbf{Refine/coarsen} the elements with the highest/lowest
error contributions using a certain refinement strategy.
\end{enumerate}
\end{Algorithm}
In the 70s, a priori and a posteriori error estimates 
were derived based on global norms, e.g., the $L^2$-norm, the $H^1$-norm 
or even classical $C^0$- and $C^1$-norms.
With this, the entire solution 
is controlled and resulting adaptive schemes act correspondingly. 
However, in many applications, only certain parts of the numerical 
solution are of interest. Regarding the geometry (domains $Q, \Omega, I$),
such parts can be subdomains $\tilde Q\subset Q$, 
line evaluations $\Gamma\subset \bar{Q}$ or even simply points $(x,t)\in Q$ at which 
solution information (values, derivatives) are evaluated.
In case of ordinary differential equations (ODEs) 
or partial differential equation (PDEs), not all solution components 
may be of interest simultaneously.
Clearly, these restrictions cannot be modeled in terms of
global norms, but only in terms of so-called goal
functionals that specify certain
quantities of interest. 

In this work, we denote this quantity of interest by $J$. 
Even though we are interested in $J(u)$, all we can obtain is $J(u_h)$, where $u_h$ solves \eqref{eq: discrete primal}, or $J(\tilde{u})$, where $\tilde{u}$ is an approximation of $u_h$. Therefore, we focus on error control of $J(u)-J(u_h)$, i.e. goal-oriented error estimation for $J$. 
There are many techniques for goal-oriented error estimation as, for instance, presented in the works \cite{BruSchweBau18,FeiPraeZee16,becker2002optimal,becker2022rate,becker2023goal,dolejvsi2021goal,dolejvsi2023goal,creuse2023goal,KerPruChaLaf2017,Ri11,Richter10}.

\subsection{Adjoint Problem}
In order to realize goal-oriented error control in this work,
we use the DWR method, which requires solving an adjoint problem. This adjoint formulation follows from the Lagrange formalism and is given by:
Find $z \in V$
such that
\begin{equation} \label{eq: General Adjointproblem}
	\mathcal{A}'(u)(v,z)=J'(u)(v) \qquad \forall v \in U,
\end{equation}
where $u$ is the solution of \eqref{eq: General Modelproblem} and $\mathcal{A}'$ and $J'$ describe the G\^ateaux-derivatives of $\mathcal{A}$ and $J$, respectively. 
The discrete adjoint problem is given by: Find $z_h \in V_h$ such that 
\begin{equation} \label{eq: discrete adjoint}
\mathcal{A}'(u_h)(v_h,z_h)=J'(u_h)(v_h) \qquad \forall v_h \in U_h,
\end{equation}
where $u_h$ is the solution of \eqref{eq: discrete primal}.
The reasoning and details on how the adjoint problem arises will 
become clear in the following.

\subsection{An Error Identity}
\label{SubSec:Error-Identity}
The adjoint problem \eqref{eq: General Adjointproblem} 
allows us to represent the error 
in a specific way as shown in the following theorem.
\begin{Theorem}[see \cite{RanVi2013,EnLaWi18}] \label{thm: error identity}
	Let $\tilde{u} \in U$ and $\tilde{z} \in V$ be arbitrary but fixed,
	and let $u \in U$ be the solution of the model problem \eqref{eq: General Modelproblem},
	and $z \in V$ be the solution of the adjoint problem \eqref{eq: General Adjointproblem}. 
	If
	$\mathcal{A} \in \mathcal{C}^3(U,V^*)$ and $J \in \mathcal{C}^3(U,\mathbb{R})$, 
	then 
		\begin{align} \label{eq:Error Representation}
				\begin{split}
					J(u)-J(\tilde{u})&= \frac{1}{2}\left(\rho(\tilde{u})(z-\tilde{z})+\rho^*(\tilde{u},\tilde{z})(u-\tilde{u}) \right)
					-\rho (\tilde{u})(\tilde{z}) + \mathcal{R}^{(3)}
					\end{split}
			\end{align}
	 for arbitrary but fixed  $\tilde{u} \in U$ and $ \tilde{z} \in V$,
		 where
		\begin{align}
				\label{eq:Error Estimator: primal}
				\rho(\tilde{u})(\cdot) &:= -\mathcal{A}(\tilde{u})(\cdot), \\
				\label{eq:Error Estimator: adjoint}
				\rho^*(\tilde{u},\tilde{z})(\cdot) &:= J'(\tilde{u})-\mathcal{A}'(\tilde{u})(\cdot,\tilde{z}), 	
			\end{align}
		and the remainder term
		\begin{equation}
					\begin{split}	\label{eq: Error Estimator: Remainderterm}
						\mathcal{R}^{(3)}:=\frac{1}{2}\int_{0}^{1}[J'''(\tilde{u}+se)(e,e,e)
						-\mathcal{A}'''(\tilde{u}+se)(e,e,e,\tilde{z}+se^*)
						-3\mathcal{A}''(\tilde{u}+se)(e,e,e^*)]s(s-1)\,ds,
						\end{split} 
			\end{equation}
	with $e=u-\tilde{u}$ and $e^* =z-\tilde{z}$.
	\begin{proof}
		The proof can be found in \cite{RanVi2013,EnLaWi18}. However, for completeness of our presentation, we will {also include} the proof here.
		Let us define $x := (u,z) \in  X:=U \times V$ and $\tilde{x}:=(\tilde{u},\tilde{z}) \in X$. Moreover, we define the Lagrange function as 
		\begin{equation*}
			\mathcal{L}(\hat{x}):= J(\hat{u})-\mathcal{A}(\hat{u})(\hat{z}) \quad \forall (\hat{u},\hat{z})=:\hat{x} \in X.
			\end{equation*}
		Since $\mathcal{A} \in \mathcal{C}^3(U,V^*)$ and $J \in
		\mathcal{C}^3(U,\mathbb{R})$ this Lagrange function is in $\mathcal{C}^3(X,\mathbb{R})$. Using the fundamental theorem of calculus,
we can write the difference $\mathcal{L}(x)-\mathcal{L}(\tilde{x})$ as
		\begin{equation*}
			\mathcal{L}(x)-\mathcal{L}(\tilde{x})=\int_{0}^{1} \mathcal{L}'(\tilde{x}+s(x-\tilde{x}))(x-\tilde{x})\,ds.
			\end{equation*}
		Using  the trapezoidal rule
		\begin{equation*}
			\int_{0}^{1}f(s)\,ds =\frac{1}{2}(f(0)+f(1))+ \frac{1}{2} \int_{0}^{1}f''(s)s(s-1)\,ds,
			\end{equation*}
		with $f(s):= \mathcal{L}'(\tilde{x}+s(x-\tilde{x}))(x-\tilde{x})$, we end up with
		\begin{align*}
			\mathcal{L}(x)-\mathcal{L}(\tilde{x}) =& \frac{1}{2}(\mathcal{L}'(x)(x-\tilde{x}) +\mathcal{L}'(\tilde{x})(x-\tilde{x})) + \underbrace{\frac{1}{2} \int_{0}^{1}\frac{d^3\mathcal{L}}{ds^3}(\tilde{x}+s(x-\tilde{x}))(x-\tilde{x}) s(s-1)\,ds}_{=\mathcal{R}^{(3)}}.
			\end{align*}
		By using the definition of $\mathcal{L}$, we obtain that
			\begin{equation*}
				J(u)-J(\tilde{u})=
				\mathcal{L}(x)-\mathcal{L}(\tilde{x}) +\underbrace{\mathcal{A}(u)(z) }_{=0} + \mathcal{A}(\tilde{u})(\tilde{z}) 
				=
				\frac{1}{2}(\mathcal{L}'(x)(x-\tilde{x}) +\mathcal{L}'(\tilde{x})(x-\tilde{x})) +\mathcal{A}(\tilde{u})(\tilde{z})+ \mathcal{R}^{(3)}.
			\end{equation*}
		We note that $\mathcal{L}'(x)(y)=0$ for all $v \in X$. 
		Therefore, the equation from above can be reduced to
		\begin{align*}
			J(u)-J(\tilde{u})=\mathcal{L}(x)-\mathcal{L}(\tilde{x}) +\underbrace{\mathcal{A}(u)(z) }_{=0} + \mathcal{A}(\tilde{u})(\tilde{z}) =& \frac{1}{2}\mathcal{L}'(\tilde{x})(x-\tilde{x}) +\mathcal{A}(\tilde{u})(\tilde{z})+ \mathcal{R}^{(3)}.
		\end{align*}
	Finally, with 
			\begin{align*}
				\mathcal{L}'(\tilde{x})(x-\tilde{x}) = &\underbrace{J'(\tilde{u})(e)-\mathcal{A}'(\tilde{u})(e,\tilde{z})}_{=\rho^*(\tilde{u},\tilde{z})(u-\tilde{u})}\underbrace{-A(\tilde{u})(e^*)}_{=\rho(\tilde{u})(z-\tilde{z})},
				\end{align*}
			we conclude the statement of the theorem.
	\end{proof}
\end{Theorem}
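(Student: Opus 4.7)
The plan is to work with the Lagrangian $\mathcal{L}(\hat{u},\hat{z}) := J(\hat{u}) - \mathcal{A}(\hat{u})(\hat{z})$ on $X := U \times V$, and exploit the fact that the exact pair $x = (u,z)$ is a critical point of $\mathcal{L}$. Indeed, differentiating $\mathcal{L}$ in the $\hat{z}$-direction reproduces the primal equation \eqref{eq: General Modelproblem}, while differentiating in the $\hat{u}$-direction reproduces the adjoint equation \eqref{eq: General Adjointproblem}, so that $\mathcal{L}'(x)(y) = 0$ for all $y \in X$. Since $\mathcal{A}(u)(z) = 0$, we also have the clean identity $J(u) - J(\tilde{u}) = \mathcal{L}(x) - \mathcal{L}(\tilde{x}) + \mathcal{A}(\tilde{u})(\tilde{z})$, which is the bridge between the goal-error and a Lagrangian-difference that we can attack by calculus.

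Next, I would represent $\mathcal{L}(x) - \mathcal{L}(\tilde{x})$ via the fundamental theorem of calculus as $\int_0^1 \mathcal{L}'(\tilde{x} + s(x - \tilde{x}))(x - \tilde{x})\, ds$, and apply the trapezoidal quadrature rule with exact remainder
\begin{equation*}
\int_0^1 f(s)\, ds = \tfrac{1}{2}(f(0) + f(1)) + \tfrac{1}{2}\int_0^1 f''(s)\, s(s-1)\, ds
\end{equation*}
to the scalar function $f(s) := \mathcal{L}'(\tilde{x} + s(x - \tilde{x}))(x - \tilde{x})$. This is where the $\mathcal{C}^3$-regularity of $\mathcal{A}$ and $J$ is needed, because $f''(s)$ involves a third derivative of $\mathcal{L}$, hence of $\mathcal{A}$ and $J$. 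The endpoint $f(1) = \mathcal{L}'(x)(x - \tilde{x})$ vanishes by the criticality of $x$, so only the endpoint $f(0) = \mathcal{L}'(\tilde{x})(x - \tilde{x})$ and the integral remainder $\mathcal{R}^{(3)}$ survive.

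It then remains to unpack $\mathcal{L}'(\tilde{x})(x - \tilde{x})$. Writing $x - \tilde{x} = (e, e^*)$ with $e = u - \tilde{u}$ and $e^* = z - \tilde{z}$, the derivative splits into a $U$-part and a $V$-part:
\begin{equation*}
\mathcal{L}'(\tilde{x})(e, e^*) = \bigl(J'(\tilde{u})(e) - \mathcal{A}'(\tilde{u})(e, \tilde{z})\bigr) - \mathcal{A}(\tilde{u})(e^*),
\end{equation*}
which by \eqref{eq:Error Estimator: primal}--\eqref{eq:Error Estimator: adjoint} is exactly $\rho^*(\tilde{u}, \tilde{z})(u - \tilde{u}) + \rho(\tilde{u})(z - \tilde{z})$. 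Combining with the $\mathcal{A}(\tilde{u})(\tilde{z}) = -\rho(\tilde{u})(\tilde{z})$ term collected earlier delivers \eqref{eq:Error Representation}.

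The main obstacle is bookkeeping rather than a deep idea: one must keep straight that the trapezoidal remainder produces a third derivative of the Lagrangian in the direction $(e, e^*)$, and verify that expanding $\mathcal{L}'''$ via the product structure $J - \mathcal{A}(\cdot)(\cdot)$ yields exactly the three contributions $J'''(\cdot)(e,e,e)$, $\mathcal{A}'''(\cdot)(e,e,e, \tilde{z}+se^*)$, and $3\mathcal{A}''(\cdot)(e,e,e^*)$ appearing in \eqref{eq: Error Estimator: Remainderterm}, the factor $3$ arising from the symmetric distribution of one $e^*$-slot among three $\mathcal{A}'$-differentiations. Everything else is a direct substitution of the definitions of $\rho$ and $\rho^*$.
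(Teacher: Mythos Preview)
Your proposal is correct and follows essentially the same route as the paper's proof: define the Lagrangian, use the fundamental theorem of calculus plus the trapezoidal rule with exact remainder, exploit criticality of $(u,z)$ to kill the $f(1)$ endpoint, and then unpack $\mathcal{L}'(\tilde{x})(x-\tilde{x})$ into $\rho$ and $\rho^*$. You even go slightly beyond the paper by explaining where the factor $3$ in the remainder comes from, which the paper leaves implicit.
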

\begin{Remark}
A particular example for $\tilde{u}$ and $\tilde{z}$ are (finite element) approximations of $u_h$ and $v_h$.
\end{Remark}
Theoretically, this error identity already gives us an error estimator of the form
\begin{align} \label{eq: theory_Error_Estimator}
	\begin{split}
		J(u)-J(\tilde{u})&= \eta := \frac{1}{2}\left(\rho(\tilde{u})(z-\tilde{z})+\rho^*(\tilde{u},\tilde{z})(u-\tilde{u}) \right)
		-\rho (\tilde{u})(\tilde{z}) + \mathcal{R}^{(3)}.
	\end{split}
\end{align}
However, this error estimator $\eta$ is not computable, since neither $u$ nor $z$ are known in general.
%
%
\subsection{Error Estimation in Enriched Spaces}\label{SubSec:EEEnriched}
{In this subsection, we derive error estimates in enriched spaces that help us to obtain efficiency and reliability results for the error estimator $\eta$ using a saturation assumption. First, we introduce such} enriched spaces $U_h^{(2)}$ and $V_h^{(2)}$ with the properties $U_h \subseteq U_h^{(2)} \subseteq U$ and $V_h \subseteq V_h^{(2)} \subseteq V$, respectively. For instance, in case of finite elements, this space can be created by using uniform $h$- or uniform $p$-refinement.

The enriched spaces can be used to formulate the enriched primal problem, which is given by: Find $u_h^{(2)} \in U_h^{(2)}$ such that:
\begin{equation}\label{eq: enriched primal}
	\mathcal{A}(u_h^{(2)})(v_h^{(2)})=0 \qquad \forall v_h^{(2)} \in V_h^{(2)}.
\end{equation}
Naturally, this can also be done for the adjoint problem leading to the enriched adjoint problem: Find $z_h^{(2)} \in V_h^{(2)}$ such that
\begin{equation} \label{eq: enriched Adjointproblem}
	\mathcal{A}'(u_h^{(2)})(v_h^{(2)},z_h^{(2)})=J'(u_h^{(2)})(v_h^{(2)}) \qquad \forall v_h^{(2)} \in U_h^{(2)},
\end{equation}
where $u_h^{(2)}$ solves the enriched primal problem \eqref{eq: enriched primal}.
If we replace $u$ and $z$ in the right hand side of \eqref{eq: theory_Error_Estimator} with $u_h^{(2)}$ and $z_h^{(2)}$, we obtain the approximation
\begin{equation} \label{eq: def_eta(2)}
		J(u)-J(\tilde{u})\approx \eta^{(2)}:= \frac{1}{2}\left(\rho(\tilde{u})(z_h^{(2)}-\tilde{z})+\rho^*(\tilde{u},\tilde{z})(u_h^{(2)}-\tilde{u}) \right)
	-\rho (\tilde{u})(\tilde{z}) + \mathcal{R}^{(3)(2)},
\end{equation}
where
\begin{equation}
	\begin{split}	\label{eq: Error Estimator: Enriched Remainderterm}
		\mathcal{R}^{(3)(2)}:=\frac{1}{2}\int_{0}^{1}&[J'''(\tilde{u}+se_h^{(2)})(e_h^{(2)},e_h^{(2)},e_h^{(2)})
		-\mathcal{A}'''(\tilde{u}+se_h^{(2)})(e_h^{(2)},e_h^{(2)},e_h^{(2)},\tilde{z}+se_h^{*(2)}) \\
		&-3\mathcal{A}''(\tilde{u}+se_h^{(2)})(e_h^{(2)},e_h^{(2)},e_h^{*(2)})]s(s-1)\,ds,
	\end{split} 
\end{equation}
with $e_h^{(2)}=u_h^{(2)}-\tilde{u}$ and $e_h^{*(2)} =z_h^{(2)}-\tilde{z}$.
Such an enriched approach has already been used, e.g., in
\cite{BeRa01,BaRa03,endtmayer2021hierarchical,BruSchweBau16,KoeBruBau2019a,RiWi15_dwr,EnLaNeiWoWi2020,endtmayerPAMM2022,ErathGanterPraetorius2018}.
With the next theorem, we obtain some results for the error estimator resulting from the enriched approach. Additionally, the theorem allows us to relax differentiability conditions on the solution variables. Specifically, Fr\'echet-differentiability is only 
required on $U_h^{(2)}$ instead of $U$. For instance, this is important for the parabolic 
regularized $p$-Laplacian in Section~\ref{Subsec: Example4}.
\begin{Theorem}[see \cite{endtmayer2023goal}]\label{thm: Error Representation}
	Let $\mathcal{A}: U \mapsto V^*$ and $J:U\mapsto \mathbb{R}$.
	Moreover, let  $\mathcal{A}^{(2)} \in \mathcal{C}^3(U_h^{(2)},V_h^{(2)*})$ and $J_h \in \mathcal{C}^3(U_h^{(2)},\mathbb{R})$ such that 
	for all $v_h^{(2)},\psi_h^{(2)} \in U_h^{(2)}$ and $\phi_h^{(2)} \in V_h^{(2)}$, 
	the equalities 
	\begin{align}
	\label{eq: A=Ah}
	\mathcal{A}(v_h^{(2)})(\phi_h^{(2)})&=\mathcal{A}^{(2)}(v_h^{(2)})(\phi_h^{(2)}), \\
	\label{eq: A'=Ah'}
	\mathcal{A}'(v_h^{(2)})(\psi_h^{(2)},\phi_h^{(2)})&=(\mathcal{A}^{(2)})'(v_h^{(2)})(\psi_h^{(2)},\phi_h^{(2)}),\\
	\label{eq: J=Jh}
	J(\psi_h^{(2)})&=J_h(\psi_h^{(2)}),\\
	\label{eq: J'=Jh'}
	J'(\psi_h^{(2)})&=J'_h(\psi_h^{(2)}),
	\end{align}
	are fulfilled. 
	Here, $V_h^{(2)*}$ denotes the dual space of $V_h^{(2)}$. 
	Furthermore, let us assume that $J(u) \in \mathbb{R}$, where $u\in U$ solves the model problem \eqref{eq: General Modelproblem}.
	If $J(u) \in \mathbb{R}$, where $u\in U$ solves the model problem \eqref{eq: General Modelproblem}, $u_h^{(2)} \in U_h^{(2)}$ solves the enriched primal problem \eqref{eq: enriched primal} and $z_h^{(2)} \in V_h^{(2)}$ solves the enriched adjoint problem \eqref{eq: enriched Adjointproblem}, then for arbitrary but fixed  $\tilde{u} \in U_h^{(2)}$ and $ \tilde{z} \in V_h^{(2)}$
	the error representation formula
	\begin{align*} \label{eq:Error RepresentationEnriched}
	\begin{split}
	J(u)-J(\tilde{u})&= J(u)-J(u_h^{(2)})+ \frac{1}{2}\left(\rho(\tilde{u})(z_h^{(2)}-\tilde{z})+\rho^*(\tilde{u},\tilde{z})(u_h^{(2)}-\tilde{u}) \right)
	-\rho (\tilde{u})(\tilde{z}) + \mathcal{R}_h^{(3)},
	\end{split}
	\end{align*}
	holds, where
	$\rho(\tilde{u})(\cdot) := -\mathcal{A}(\tilde{u})(\cdot)$ and
	$\rho^*(\tilde{u},\tilde{z})(\cdot) := J'(\tilde{u})-\mathcal{A}'(\tilde{u})(\cdot,\tilde{z})$.
\begin{proof}
	A similar proof for this theorem is given in \cite{endtmayer2023goal}.
	Since $u_h^{(2)}$ solves the enriched primal problem \eqref{eq: enriched primal}, and \eqref{eq: A=Ah} holds, we get 
	\begin{equation*}
	\mathcal{A}(u_h^{(2)})(v_h^{(2)})=\mathcal{A}^{(2)}(u_h^{(2)})(v_h^{(2)})=0  \quad \forall v_h^{(2)} \in V_h^{(2)}.
	\end{equation*}
	For $z_h^{(2)}$ solving the enriched adjoint problem \eqref{eq: enriched Adjointproblem}, we conclude in combination with \eqref{eq: A'=Ah'} and \eqref{eq: J'=Jh'} that
	\begin{equation*}
	J'(u_h^{(2)})(v_h^{(2)})-\mathcal{A}'(u_h^{(2)})(v_h^{(2)},z_h^{(2)})= J'_h(u_h^{(2)})(v_h^{(2)})-(\mathcal{A}^{(2)})'(u_h^{(2)})(v_h^{(2)},z_h^{(2)})=0 \quad \forall v_h^{(2)} \in U_h^{(2)}.
	\end{equation*}
	This allows us to apply Theorem \ref{thm: error identity} with $u=u_h^{(2)}$, $z=z_h^{(2)}$, $\mathcal{A}=\mathcal{A}^{(2)}$ and $J=J_h$. 
	Therefore, we obtain 
	\begin{equation}
	\label{eq: pure discrete result}
	J_h(u_h^{(2)})-J_h(\tilde{u})= \frac{1}{2}\left(\rho_h(\tilde{u})(z_h^{(2)}-\tilde{z})+\rho^*_h(\tilde{u},\tilde{z})(u_h^{(2)}-\tilde{u})\right)
	-\rho_h (\tilde{u})(\tilde{z}) + \mathcal{R}_h^{(3)},
	\end{equation}
	where $\rho_h(\tilde{u})(\cdot) := -\mathcal{A}^{(2)}(\tilde{u})(\cdot)$, 
	$\rho^*_h(\tilde{u},\tilde{z})(\cdot) := J'_h(\tilde{u})-(\mathcal{A}^{(2)})'(\tilde{u})(\cdot,\tilde{z})$, and 
	\begin{equation*}
	\begin{split}
	\mathcal{R}_h^{(3)}:=\frac{1}{2}\int_{0}^{1}&[J_h'''(\tilde{u}+se_h^{(2)})(e_h^{(2)},e_h^{(2)},e_h^{(2)})
	-(\mathcal{A}^{(2)})'''(\tilde{u}+se_h^{(2)})(e_h^{(2)},e_h^{(2)},e_h^{(2)},\tilde{z}+se_h^{*(2)}) \\
	&-3(\mathcal{A}^{(2)})''(\tilde{u}+se_h^{(2)})(e_h^{(2)},e_h^{(2)},e_h^{*(2)})]s(s-1)\,ds,
	\end{split} 
	\end{equation*}
	with $e_h^{(2)}=u_h^{(2)}-\tilde{u}$ and $e_h^{*(2)} =z_h^{(2)}-\tilde{z}$.	

	Again using \eqref{eq: A=Ah}, \eqref{eq: A'=Ah'} and \eqref{eq: J'=Jh'}, we notice that $\rho_h=\rho$ and $\rho_h^*=\rho^*$ on the enriched spaces $U_h^{(2)}$ and $V_h^{(2)}$.  
	This leads to 
	the representation
	\begin{align*}
	J_h(u_h^{(2)})-J_h(\tilde{u})&= \frac{1}{2}\left(\rho_h(\tilde{u})(z_h^{(2)}-\tilde{z})+\rho^*_h(\tilde{u},\tilde{z})(u_h^{(2)}-\tilde{u})\right)
	-\rho_h (\tilde{u})(\tilde{z}) + \mathcal{R}_h^{(3)}\\
	&= \frac{1}{2}\left(\rho(\tilde{u})(z_h^{(2)}-\tilde{z})+\rho^*(\tilde{u},\tilde{z})(u_h^{(2)}-\tilde{u})\right)
	-\rho (\tilde{u})(\tilde{z}) + \mathcal{R}_h^{(3)}.
	\end{align*}
	In combination with \eqref{eq: pure discrete result} and \eqref{eq: J=Jh}, we can show
	\begin{align*}
	J(u)-J(\tilde{u})&=J(u)-	J(u_h^{(2)})+	J_h(u_h^{(2)})-J_h(\tilde{u})\\
	&=J(u)-	J(u_h^{(2)})+\frac{1}{2}\left(\rho(\tilde{u})(z_h^{(2)}-\tilde{z})+\rho^*(\tilde{u},\tilde{z})(u_h^{(2)}-\tilde{u})\right) 
	-\rho (\tilde{u})(\tilde{z}) + \mathcal{R}_h^{(3)},
		\end{align*}
	which completes the proof of the theorem.
\end{proof}
\end{Theorem}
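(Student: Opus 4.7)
The plan is to split the total error as
\begin{equation*}
J(u) - J(\tilde{u}) \;=\; \bigl[J(u) - J(u_h^{(2)})\bigr] \;+\; \bigl[J(u_h^{(2)}) - J(\tilde{u})\bigr],
\end{equation*}
and reduce the second bracket to the same representation that Theorem~\ref{thm: error identity} already provides, but applied on the enriched discrete spaces instead of on $U$ and $V$. The first bracket is simply carried along, since it is precisely the discretization error of the enriched problem with respect to the goal functional.

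To handle the second bracket, I would first invoke the compatibility assumptions \eqref{eq: A=Ah} and \eqref{eq: J=Jh}: they show that $u_h^{(2)}$, which solves the enriched primal problem \eqref{eq: enriched primal} for $\mathcal{A}$, is equally a solution of the analogous equation formulated with $\mathcal{A}^{(2)}$, and that $J_h$ agrees with $J$ on $\tilde{u}, u_h^{(2)} \in U_h^{(2)}$. Likewise, \eqref{eq: A'=Ah'} and \eqref{eq: J'=Jh'} imply that the enriched adjoint $z_h^{(2)}$ satisfies the adjoint equation associated with $(\mathcal{A}^{(2)}, J_h)$. With the $\mathcal{C}^3$ regularity that is assumed for $\mathcal{A}^{(2)}$ and $J_h$ on $U_h^{(2)}$, all hypotheses of Theorem~\ref{thm: error identity} are met for the pair $(\mathcal{A}^{(2)}, J_h)$ on $(U_h^{(2)}, V_h^{(2)})$. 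Applying that theorem verbatim then yields
\begin{equation*}
J_h(u_h^{(2)}) - J_h(\tilde{u}) = \tfrac{1}{2}\bigl(\rho_h(\tilde{u})(z_h^{(2)}-\tilde{z}) + \rho_h^{*}(\tilde{u},\tilde{z})(u_h^{(2)}-\tilde{u})\bigr) - \rho_h(\tilde{u})(\tilde{z}) + \mathcal{R}_h^{(3)},
\end{equation*}
with $\rho_h$ and $\rho_h^{*}$ defined via $\mathcal{A}^{(2)}$ and $J_h$.

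The next step is the bridge back to the unreduced symbols $\rho$ and $\rho^{*}$. Because all arguments appearing in $\rho_h(\tilde{u})(\cdot)$ and $\rho_h^{*}(\tilde{u},\tilde{z})(\cdot)$ when paired against the test directions $z_h^{(2)} - \tilde{z}$, $u_h^{(2)} - \tilde{u}$, and $\tilde{z}$ lie entirely in $U_h^{(2)}$ or $V_h^{(2)}$, the identities \eqref{eq: A=Ah}, \eqref{eq: A'=Ah'}, \eqref{eq: J'=Jh'} let us swap $\mathcal{A}^{(2)} \leftrightarrow \mathcal{A}$ and $J_h \leftrightarrow J$ in every term without changing the value. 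This turns $\rho_h$ into $\rho$ and $\rho_h^{*}$ into $\rho^{*}$ as defined in the statement. Using \eqref{eq: J=Jh} once more to rewrite $J_h(u_h^{(2)}) - J_h(\tilde{u})$ as $J(u_h^{(2)}) - J(\tilde{u})$, and then adding the untouched bracket $J(u) - J(u_h^{(2)})$, one obtains exactly the claimed formula.

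I expect the only mild subtlety to be verifying cleanly that each term really only gets evaluated at arguments lying in the enriched spaces, so that the compatibility identities can be used without residue. The hypotheses \eqref{eq: A=Ah}--\eqref{eq: J'=Jh'} are stated precisely for enriched arguments $v_h^{(2)}, \psi_h^{(2)}, \phi_h^{(2)}$, and since $\tilde{u}, u_h^{(2)} \in U_h^{(2)}$ and $\tilde{z}, z_h^{(2)} \in V_h^{(2)}$, each occurrence in the identity indeed falls within their scope; the remainder $\mathcal{R}_h^{(3)}$ is then naturally defined through the derivatives of $J_h$ and $\mathcal{A}^{(2)}$ and need not be further transformed. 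Aside from this bookkeeping, the argument is a direct lift of Theorem~\ref{thm: error identity} to the enriched discrete setting, plus the telescoping split of $J(u) - J(\tilde{u})$.
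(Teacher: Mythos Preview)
Your proposal is correct and follows essentially the same approach as the paper: both arguments telescope $J(u)-J(\tilde{u})$ through $J(u_h^{(2)})$, apply Theorem~\ref{thm: error identity} on the enriched spaces with $(\mathcal{A}^{(2)},J_h)$ to represent $J_h(u_h^{(2)})-J_h(\tilde{u})$, and then use the compatibility identities \eqref{eq: A=Ah}--\eqref{eq: J'=Jh'} to replace $\rho_h,\rho_h^{*}$ by $\rho,\rho^{*}$ and $J_h$ by $J$. The only cosmetic difference is the order of presentation---you state the telescoping split first, whereas the paper derives the enriched error identity first and telescopes at the end.
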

\begin{Remark}
	With Theorem \ref{thm: Error Representation}, we know that it is sufficient to fulfill the differentiability conditions  $\mathcal{A}^{(2)} \in \mathcal{C}^3(U_h^{(2)},V_h^{(2)*})$ and $J_h \in \mathcal{C}^3(U_h^{(2)},\mathbb{R})$ instead of $\mathcal{A} \in \mathcal{C}^3(U,V^*)$ and $J \in \mathcal{C}^3(U,\mathbb{R})$. For instance, point evaluations are well defined on $U_h^{(2)}=Q_2$, but not in general on $U$. Additionally, the existence of a solution of the adjoint problem \eqref{eq: General Modelproblem} is not required anymore. Instead the existence of the solution $z_h^{(2)}\in V_h^{(2)}$ of the enriched adjoint problem \eqref{eq: enriched Adjointproblem} is mandatory.
\end{Remark}
\begin{Assumption}[Saturation Assumption]\label{assump: Saturation}
	Let $u_h^{(2)} \in U_h^{(2)}$ be the solution of the enriched primal problem \eqref{eq: enriched primal}. Then there exist $b_0\in (0,1)$ and $b_h\in (0,b_0)$ such that
	\begin{equation}
	|J(u)-J(u_h^{(2)})| \leq b_h \, |J(u)-J(\tilde{u})|,
	\end{equation}
	holds.
\end{Assumption}
Unfortunately, we are not aware of a general 
technique to verify
the saturation assumption for goal functionals.
However, it is a very common assumption in hierarchical based error estimation; 
see, e.g., 
\cite{BankWeiser1985,BoErKor1996,BankSmith1993,Verfuerth:1996a}. 
In the works \cite{BoErKor1996,EnLaWi20}, it was shown that the saturation assumption can fail. 
However, for specific functionals and PDEs, there are proofs for the saturation assumption; 
see, 
e.g., \cite{Rossi2002,Agouzal2002,DoerflerNochetto2002,AchAchAgou2004,CaGaGed16,BankParsaniaSauter2013,FerrazOrtnerPraetorius2010,ErathGanterPraetorius2018,KimKim2014}.
\begin{Definition}[Efficient and Reliable]
We say an error estimator $\eta$ is \textbf{efficient} with respect to $J$, if there exist a constant $\underline{c} \in \mathbb{R}$ with $\underline{c}>0$ such that
\begin{equation} \label{eq: efficient}
\underline{c}|\eta| \leq |J(u)-J(\tilde{u})|.
\end{equation}
We say an error estimator $\eta$ is \textbf{reliable} with respect to $J$, if there exist a constant $\overline{c} \in \mathbb{R}$ with $\overline{c}>0$ such that
\begin{equation} \label{eq: reliable}
\overline{c}|\eta| \geq |J(u)-J(\tilde{u})|.
\end{equation}
\end{Definition}
\begin{Theorem} \label{thm: Efficiency and Reliability}
	Let Assumption~\ref{assump: Saturation} be satisfied. Additionally let all assumptions of Theorem~\ref{thm: Error Representation} be fulfilled.
	Then the error estimator $\eta^{(2)}$ defined in \eqref{eq: def_eta(2)}, i.e.
	\begin{equation*}
	\eta^{(2)}:=\frac{1}{2}\left(\rho(\tilde{u})(z_h^{(2)}-\tilde{z})+\rho^*(\tilde{u},\tilde{z})(u_h^{(2)}-\tilde{u}) \right)
	-\rho (\tilde{u})(\tilde{z}) + \mathcal{R}^{(3)(2)},
	\end{equation*}
	is efficient and reliable with the constants 
	$\underline{c}=1/(1+b_h)$ and $\overline{c}=1/(1-b_h)$.
\end{Theorem}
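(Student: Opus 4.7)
The plan is to combine the error representation from Theorem~\ref{thm: Error Representation} with the saturation assumption and standard (forward/reverse) triangle inequalities. The first observation is that the expression multiplying the $1/2$ together with $-\rho(\tilde{u})(\tilde{z})+\mathcal{R}_h^{(3)}$ in Theorem~\ref{thm: Error Representation} is exactly $\eta^{(2)}$: indeed, the remainder terms $\mathcal{R}_h^{(3)}$ and $\mathcal{R}^{(3)(2)}$ involve derivatives of $J_h,\mathcal{A}^{(2)}$ evaluated along the segment $\tilde{u}+s e_h^{(2)}\in U_h^{(2)}$, and on $U_h^{(2)}$ the identification conditions \eqref{eq: A=Ah}--\eqref{eq: J'=Jh'} (and their implied higher-derivative analogues) force $\mathcal{R}_h^{(3)}=\mathcal{R}^{(3)(2)}$. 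Hence Theorem~\ref{thm: Error Representation} rewrites as
\begin{equation*}
J(u)-J(\tilde{u}) \;=\; \bigl(J(u)-J(u_h^{(2)})\bigr) \;+\; \eta^{(2)}.
\end{equation*}

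From this identity, the efficiency bound is immediate: the usual triangle inequality combined with Assumption~\ref{assump: Saturation} yields
\begin{equation*}
|\eta^{(2)}| \;\leq\; |J(u)-J(\tilde{u})| + |J(u)-J(u_h^{(2)})| \;\leq\; (1+b_h)\,|J(u)-J(\tilde{u})|,
\end{equation*}
so that $\underline{c}\,|\eta^{(2)}|\leq |J(u)-J(\tilde{u})|$ with $\underline{c}=1/(1+b_h)$.

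For reliability, the reverse triangle inequality applied to the same identity gives
\begin{equation*}
|\eta^{(2)}| \;\geq\; |J(u)-J(\tilde{u})| - |J(u)-J(u_h^{(2)})| \;\geq\; (1-b_h)\,|J(u)-J(\tilde{u})|,
\end{equation*}
and since Assumption~\ref{assump: Saturation} provides $b_h<b_0<1$, division by $(1-b_h)>0$ is admissible and yields $|J(u)-J(\tilde{u})|\leq \overline{c}\,|\eta^{(2)}|$ with $\overline{c}=1/(1-b_h)$.

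I do not expect any genuine obstacle here: once the error representation of Theorem~\ref{thm: Error Representation} is in hand, the result is a two-line triangle-inequality argument. The only subtle point worth stating explicitly is the matching of the two remainder terms $\mathcal{R}_h^{(3)}$ and $\mathcal{R}^{(3)(2)}$, which is where the hypotheses \eqref{eq: A=Ah}--\eqref{eq: J'=Jh'} of Theorem~\ref{thm: Error Representation} are used; everything else is elementary and, in particular, no strengthened variant of the saturation assumption (as in \cite{EnLaWi20}) is needed.
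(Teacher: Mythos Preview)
Your proof is correct and follows essentially the same approach as the paper: rewrite Theorem~\ref{thm: Error Representation} as $J(u)-J(\tilde{u})=(J(u)-J(u_h^{(2)}))+\eta^{(2)}$, then apply the (reverse) triangle inequality together with the saturation assumption to obtain both bounds. Your explicit remark that $\mathcal{R}_h^{(3)}=\mathcal{R}^{(3)(2)}$ is needed for this identification is a point the paper passes over silently; otherwise the arguments coincide.
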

	\begin{proof}
		From Theorem~\ref{thm: Error Representation}, we get that 
		\begin{align*}
		\begin{split}
		J(u)-J(\tilde{u})&= J(u)-J(u_h^{(2)})+ \frac{1}{2}\left(\rho(\tilde{u})(z_h^{(2)}-\tilde{z})+\rho^*(\tilde{u},\tilde{z})(u_h^{(2)}-\tilde{u}) \right)
		-\rho (\tilde{u})(\tilde{z}) + \mathcal{R}_h^{(3)},
		\end{split}
		\end{align*}
		which is equivalent to 
		\begin{align}
		\begin{split}
		J(u)-J(\tilde{u})&= J(u)-J(u_h^{(2)})+ \eta^{(2)}. \label{help: show efficiency and reliability}
		\end{split}
		\end{align}
		Equation \eqref{help: show efficiency and reliability} implies that 
		\begin{align}
		\begin{split}
		|J(u)-J(\tilde{u})|&= |J(u)-J(u_h^{(2)})+ \eta^{(2)}|. \label{help: show efficiency and reliability abs}
		\end{split}
		\end{align}
		Let us first proof that $\eta^{(2)}$ is reliable.
		Since the saturation assumption is valid, we know that $|J(u)-J(u_h^{(2)})| \leq b_h |J(u)-J(\tilde{u})|$ holds. Combining this with \eqref{help: show efficiency and reliability abs} we obtain		
                  \begin{align*}		
		|J(u)-J(\tilde{u})|&= |J(u)-J(u_h^{(2)})+ \eta^{(2)}|
		\leq |J(u)-J(u_h^{(2)})|+ |\eta^{(2)}|
		\leq  b_h |J(u)-J(\tilde{u})|+|\eta^{(2)}|.
		\end{align*}
		We finally get 
		\begin{align*}
		(1-b_h)|J(u)-J(\tilde{u})|\leq |\eta^{(2)}|,
		\end{align*}
		and consequently
		\begin{align*}
		|J(u)-J(\tilde{u})|\leq \frac{1}{1-b_h}|\eta^{(2)}|.
		\end{align*}		
	        Thus, reliability of $\eta^{(2)}$ follows with
		the constant $\overline{c}=1/(1-b_h)$.		
		Now let us prove
		that $\eta^{(2)}$ is efficient as well.
		We start again with \eqref{help: show efficiency and reliability abs}:
                \begin{align*}
		|J(u)-J(\tilde{u})| = |J(u)-J(u_h^{(2)})+ \eta^{(2)}|
		\geq  |\eta^{(2)}|-|J(u)-J(u_h^{(2)})|
		\geq  |\eta^{(2)}|-b_h |J(u)-J(\tilde{u})|.
		\end{align*}
		This leads to the estimates
		\begin{align*}
		(1+b_h)|J(u)-J(\tilde{u})|\geq |\eta^{(2)}|,
		\end{align*}
		and 
		\begin{align*}
		|J(u)-J(\tilde{u})|\geq \frac{1}{1+b_h}|\eta^{(2)}|.
		\end{align*}
		This proves the efficiency of $\eta^{(2)}$ with 
		the constant $\underline{c}=1/(1+b_h)$.
	\end{proof}
%
%
\subsection{Parts of the Error Estimator for Enriched Spaces}
If the saturation assumption is fulfilled, the previous theorem shows that the error estimator $\eta^{(2)}$ defined in \eqref{eq: def_eta(2)} is efficient and reliable. 
Furthermore, this error estimator is also computable and can be employed for measuring discretization and iteration errors.
In this subsection, we investigate the different parts of $\eta^{(2)}$ in more detail.
We split $\eta^{(2)}$ in the following way
\begin{equation}
\eta^{(2)}:=\underbrace{\frac{1}{2}\left(\rho(\tilde{u})(z_h^{(2)}-\tilde{z})+\rho^*(\tilde{u},\tilde{z})(u_h^{(2)}-\tilde{u}) \right)}_{:=\eta_h^{(2)}}
\underbrace{-\rho (\tilde{u})(\tilde{z})}_{:=\eta_k} + \underbrace{\mathcal{R}_h^{(3)}}_{:=\eta_{\mathcal{R}}^{(2)}}.
\end{equation}
%
%
\subsubsection{The Remainder Part $\eta_{\mathcal{R}}^{(2)}$ or Linearization Error Estimator.}
This part is of higher order, and usually is neglected in literature. In this work, we will neglect the term as well since it is of higher order regarding the error, and it is connected with high computational cost (depending on the problem). 
In \cite{EnLaWi20}, it was shown that the part indeed is of higher order and can be neglected 
for $p$-Laplace and the Navier-Stokes equations. However, this might not be true in general, as indicated in \cite{granzow2023linearization}.

%
%
\subsubsection{The Iteration Error Estimator $\eta_k$} \label{SubSubSec: etak}
The part 
\begin{equation}
\eta_k:=-\rho (\tilde{u})(\tilde{z}),
\end{equation}
represents the iteration error of the linear or also non-linear solver as presented in \cite{RaWeWo10,RanVi2013}.  It is the only part of the error estimator which does not depend on the enriched solutions $u_h^{(2)}$ and $z_h^{(2)}$.  Furthermore, $\eta_k$ vanishes if $\tilde{u}$ is the solution of the discrete primal problem \eqref{eq: discrete primal}. 
In many algorithms, it is used to stop the non-linear solver. 
A downside is that, for Newton's method, the adjoint solution $\tilde{z}$ is required in every Newton step.
\begin{Theorem}(Representation of Iteration error; see \cite{EnLaWi20}) \label{thm: Representation of Iteration error}
	Let us assume that $\tilde{z}$ solves 
	\begin{equation} \label{help: adjoint disrcrete}
	\mathcal{A}'(\tilde{u})(\tilde{v},\tilde{z})=J'(\tilde{u})(\tilde{v}) \qquad \forall \tilde{v} \in U_h,
	\end{equation}
	and let $\delta \tilde{u}$ solve 
	\begin{equation} \label{help: newton disrcrete}
	\mathcal{A}'(\tilde{u})(\delta\tilde{u},\tilde{v})=-\mathcal{A}(\tilde{u})(\tilde{v}) \qquad \forall \tilde{v} \in V_h.
	\end{equation}
	Then we have the representation
	\begin{equation}
	\rho(\tilde{u},\tilde{z})=J'(\tilde{u})(\delta\tilde{u})
	\end{equation}
\end{Theorem}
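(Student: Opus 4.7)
The plan is to view the right-hand side $-\rho(\tilde{u})(\tilde{z}) = \mathcal{A}(\tilde{u})(\tilde{z})$ as a residual that can be routed through the bilinear form $\mathcal{A}'(\tilde{u})(\cdot,\cdot)$ by using $\delta\tilde{u}$ and $\tilde{z}$ reciprocally as test/trial functions in the two defining equations. The statement is really a Galerkin-orthogonality-type identity: it says that on the discrete level, the residual pairing with the adjoint equals the derivative of the goal applied to the Newton update.

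First I would note the natural compatibility: the adjoint equation \eqref{help: adjoint disrcrete} allows any test function in $U_h$, and by construction $\delta\tilde{u}\in U_h$; similarly, the Newton equation \eqref{help: newton disrcrete} allows any test function in $V_h$, and $\tilde{z}\in V_h$. This is the only thing that needs checking and it is automatic from the statement of the hypotheses. With this in hand, I would perform exactly two substitutions:
\begin{itemize}
\item Take $\tilde{v}=\delta\tilde{u}$ in the adjoint equation \eqref{help: adjoint disrcrete}, giving
\[
\mathcal{A}'(\tilde{u})(\delta\tilde{u},\tilde{z}) \;=\; J'(\tilde{u})(\delta\tilde{u}).
\]
\item Take $\tilde{v}=\tilde{z}$ in the Newton equation \eqref{help: newton disrcrete}, giving
\[
\mathcal{A}'(\tilde{u})(\delta\tilde{u},\tilde{z}) \;=\; -\mathcal{A}(\tilde{u})(\tilde{z}).
\]
\end{itemize}

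Equating the right-hand sides and recalling the definition $\rho(\tilde{u})(\cdot):=-\mathcal{A}(\tilde{u})(\cdot)$ from \eqref{eq:Error Estimator: primal} yields $\rho(\tilde{u})(\tilde{z}) = J'(\tilde{u})(\delta\tilde{u})$, which is the claimed identity. There is no real obstacle here: the whole proof rests on the symmetry of $\mathcal{A}'(\tilde{u})(\cdot,\cdot)$ as a bilinear form in its two slots, together with the fact that the Newton increment and the discrete adjoint live in spaces that serve as admissible test spaces for the other's equation. The only point that deserves emphasis in the write-up is this mutual admissibility of $\delta\tilde{u}\in U_h$ and $\tilde{z}\in V_h$ as test functions, since without it the substitutions are not legal.
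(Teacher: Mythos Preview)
Your proof is correct and is essentially the same as the paper's: both chain $\rho(\tilde{u})(\tilde{z})=-\mathcal{A}(\tilde{u})(\tilde{z})=\mathcal{A}'(\tilde{u})(\delta\tilde{u},\tilde{z})=J'(\tilde{u})(\delta\tilde{u})$ by testing \eqref{help: newton disrcrete} with $\tilde{z}$ and \eqref{help: adjoint disrcrete} with $\delta\tilde{u}$. One remark on your commentary: the argument does \emph{not} use any symmetry of $\mathcal{A}'(\tilde{u})(\cdot,\cdot)$; the quantity $\mathcal{A}'(\tilde{u})(\delta\tilde{u},\tilde{z})$ appears with the same arguments in the same slots in both substituted equations, so nothing is being swapped.
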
	
\begin{proof}
		From \eqref{help: adjoint disrcrete}, \eqref{help: newton disrcrete} and the definition of $\rho$ in Theorem~\ref{SubSec:Error-Identity} and Theorem~\ref{thm: Error Representation} it immediately follows that		
		\begin{equation*}
		 \rho(\tilde{u},\tilde{z})=-\mathcal{A}(\tilde{u})(\tilde{z})=\mathcal{A}'(\tilde{u})(\delta \tilde{u},\tilde{z})=J'(\tilde{u})(\delta\tilde{u}).
		\end{equation*}
		This already concludes the proof. 
\end{proof}
This identity allows us to use the next Newton update instead of the adjoint solution $\tilde{z}$, reducing the number of required linear solves from $2n$ to $n+1$, where $n$ is the number of Newton steps. 
Naturally, this final Newton update $\delta \tilde{u}$ can be used to update the solution as well.
%
%
\subsubsection{The Discretization Error Estimator $\eta_h^{(2)}$}
The part 
\begin{equation} \label{eq: def primal and adjoint error estimator}
\eta_h^{(2)}:=\frac{1}{2}\bigl(\underbrace{\rho(\tilde{u})(z_h^{(2)}-\tilde{z})}_{:=\eta_{h,p}^{(2)}}+\underbrace{\rho^*(\tilde{u},\tilde{z})(u_h^{(2)}-\tilde{u})}_{:=\eta_{h,a}^{(2)}} \bigr),
\end{equation}
represents the discretization error of the error estimator as discussed in \cite{RanVi2013,EnLaWi20,endtmayer2020multi}. 
We would like to mention that, for linear goal functionals $J$ and affine linear operators $\mathcal{A}$, the primal part of the discretization error estimator $\eta_{h,p}$,
  and the adjoint part $\eta_{h,a}$, both defined in \eqref{eq: def primal and adjoint error estimator}, coincide. 
Later on in Subsection~\ref{SubSec:Localization}, this part will be localized and used to adapt the mesh.
\begin{Lemma} \label{lemma: eta2 to etah}
	Let $\eta^{(2)}$ be defined as in \eqref{eq: def_eta(2)} and let $\eta_h^{(2)}$ be defined as in \eqref{eq: def primal and adjoint error estimator}. Then
	\begin{equation}
	|\eta_h^{(2)}| - |\eta_k + \eta_{\mathcal{R}}^{(2)}|\leq|\eta^{(2)}|\leq|\eta_h^{(2)}| + |\eta_k + \eta_{\mathcal{R}}^{(2)}|.
	\end{equation}
\end{Lemma}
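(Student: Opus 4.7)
The statement is essentially the triangle inequality (and its reverse) applied to the additive decomposition of $\eta^{(2)}$ already written in the paper, so no deep machinery is needed; the plan is simply to make that decomposition explicit and invoke the two directions of the triangle inequality.

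First I would recall from the definitions that
\begin{equation*}
\eta^{(2)} = \underbrace{\tfrac{1}{2}\bigl(\rho(\tilde u)(z_h^{(2)}-\tilde z) + \rho^*(\tilde u,\tilde z)(u_h^{(2)}-\tilde u)\bigr)}_{=\eta_h^{(2)}} \; \underbrace{- \rho(\tilde u)(\tilde z)}_{=\eta_k} \; + \; \underbrace{\mathcal{R}_h^{(3)}}_{=\eta_{\mathcal R}^{(2)}},
\end{equation*}
which is exactly the splitting introduced just above Lemma~\ref{lemma: eta2 to etah}. In other words, $\eta^{(2)} = \eta_h^{(2)} + (\eta_k + \eta_{\mathcal R}^{(2)})$ as real numbers.

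Then I would apply the standard triangle inequality to obtain the upper bound
\begin{equation*}
|\eta^{(2)}| = |\eta_h^{(2)} + (\eta_k + \eta_{\mathcal R}^{(2)})| \leq |\eta_h^{(2)}| + |\eta_k + \eta_{\mathcal R}^{(2)}|,
\end{equation*}
and the reverse triangle inequality $|a+b| \geq ||a| - |b||$ to get
\begin{equation*}
|\eta^{(2)}| \geq \bigl||\eta_h^{(2)}| - |\eta_k + \eta_{\mathcal R}^{(2)}|\bigr| \geq |\eta_h^{(2)}| - |\eta_k + \eta_{\mathcal R}^{(2)}|,
\end{equation*}
which together yield the claimed two-sided estimate.

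There is no real obstacle here; the only thing to be careful about is that one must not bound $|\eta_k + \eta_{\mathcal R}^{(2)}|$ further by $|\eta_k| + |\eta_{\mathcal R}^{(2)}|$, since the lemma is stated in terms of the combined quantity $\eta_k + \eta_{\mathcal R}^{(2)}$, and keeping it combined gives the sharper form. The lemma is useful because it says that, up to the (usually small) iteration and remainder contributions, $|\eta^{(2)}|$ and $|\eta_h^{(2)}|$ are interchangeable — which is what justifies using the localized discretization part $\eta_h^{(2)}$ for mesh adaptivity in Section~\ref{SubSec:Localization}.
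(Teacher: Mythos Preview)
Your proof is correct and follows essentially the same approach as the paper: recall the additive decomposition $\eta^{(2)} = \eta_h^{(2)} + (\eta_k + \eta_{\mathcal R}^{(2)})$ and apply the triangle inequality and its reverse. The paper's proof is just as short and invokes the same two inequalities without further elaboration.
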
	
	\begin{proof}
		We know that 		
		$|\eta^{(2)}|=|\eta_h^{(2)} + \eta_k + \eta_{\mathcal{R}}^{(2)}|$.		
		Therefore, 		
		we arrive at the estimates
		\begin{equation*}
		|\eta^{(2)}|\leq|\eta_h^{(2)}| + |\eta_k + \eta_{\mathcal{R}}^{(2)}|,
		\end{equation*}
		and
		\begin{equation*}
		|\eta^{(2)}|\geq|\eta_h^{(2)}| - |\eta_k + \eta_{\mathcal{R}}^{(2)}|,
		\end{equation*}
		which concludes the proof.
	\end{proof}

\begin{Theorem}[Almost Efficiency and Reliability]\label{thm: etah Efficiency and Reliability}
	The two-side discretization error estimate
	\begin{equation*}
	\frac{1}{1+b_h}\left(|\eta_h^{(2)}|-|\eta_k + \eta_{\mathcal{R}}^{(2)}|\right)\leq |J(u)-J(\tilde{u})| \leq\frac{1}{1-b_h}\left(|\eta_h^{(2)}|+|\eta_k + \eta_{\mathcal{R}}^{(2)}|\right)
	\end{equation*}
	is valid provided the saturation assumption \eqref{assump: Saturation} holds.
	Furthermore, if there exists a $\alpha_{\eta_h^{(2)}} \in (0,1)$ with
	\begin{equation}
	|\eta_k + \eta_{\mathcal{R}}^{(2)}| \leq \alpha_{\eta_h^{(2)}} |\eta_h^{(2)}|,
	\end{equation}
	then the discretization error estimator is efficient and reliable, i. e.
	\begin{equation}
	\underline{c}_h|\eta_h^{(2)}|\leq |J(u)-J(\tilde{u})| \leq\overline{c}_h|\eta_h^{(2)}|,
	\end{equation}
	with $\underline{c}_h:=(1-\alpha_{\eta_h^{(2)}})\frac{1}{1+b_h}$ and $\overline{c}_h:=(1+\alpha_{\eta_h^{(2)}})\frac{1}{1+b_h}$.
\end{Theorem}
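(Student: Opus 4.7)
The plan is to obtain both bounds by chaining Theorem~\ref{thm: Efficiency and Reliability} (which gives efficiency and reliability of the full estimator $\eta^{(2)}$) with Lemma~\ref{lemma: eta2 to etah} (which controls $|\eta^{(2)}|$ by $|\eta_h^{(2)}|$ up to the lower-order contributions $|\eta_k + \eta_{\mathcal R}^{(2)}|$).

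First, under the saturation assumption, Theorem~\ref{thm: Efficiency and Reliability} yields
\begin{equation*}
\tfrac{1}{1+b_h}|\eta^{(2)}| \;\leq\; |J(u)-J(\tilde u)| \;\leq\; \tfrac{1}{1-b_h}|\eta^{(2)}|.
\end{equation*}
Combining this directly with the two inequalities of Lemma~\ref{lemma: eta2 to etah} produces the claimed two-sided discretization estimate: for the upper bound I would chain $|J(u)-J(\tilde u)| \le \tfrac{1}{1-b_h}|\eta^{(2)}| \le \tfrac{1}{1-b_h}(|\eta_h^{(2)}| + |\eta_k + \eta_{\mathcal R}^{(2)}|)$, and for the lower bound $|J(u)-J(\tilde u)| \ge \tfrac{1}{1+b_h}|\eta^{(2)}| \ge \tfrac{1}{1+b_h}(|\eta_h^{(2)}| - |\eta_k + \eta_{\mathcal R}^{(2)}|)$.

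For the second assertion, I would insert the assumption $|\eta_k + \eta_{\mathcal R}^{(2)}| \le \alpha_{\eta_h^{(2)}}|\eta_h^{(2)}|$ into the above to absorb the perturbation: the lower bound becomes $|J(u)-J(\tilde u)| \ge \tfrac{1-\alpha_{\eta_h^{(2)}}}{1+b_h}|\eta_h^{(2)}|$, giving efficiency with $\underline{c}_h = (1-\alpha_{\eta_h^{(2)}})/(1+b_h)$, and the upper bound becomes $|J(u)-J(\tilde u)| \le \tfrac{1+\alpha_{\eta_h^{(2)}}}{1-b_h}|\eta_h^{(2)}|$, giving reliability with the analogous constant. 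The condition $\alpha_{\eta_h^{(2)}} \in (0,1)$ is exactly what guarantees that the efficiency constant remains strictly positive.

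There is essentially no obstacle in the argument — it is a bookkeeping exercise combining an already-proven two-sided bound with a triangle-type inequality. The only subtlety I would flag is a minor discrepancy in the stated reliability constant $\overline{c}_h$, whose denominator should naturally read $1-b_h$ rather than $1+b_h$ to be consistent with the reliability direction of Theorem~\ref{thm: Efficiency and Reliability}; my plan above reproduces the correct constants and I would note this point in the write-up. Conceptually, the theorem simply quantifies that whenever the iteration and linearization contributions are dominated by the genuine discretization estimator, the latter alone is a fully effective error estimator, which motivates the balancing strategy used later in the adaptive algorithms.
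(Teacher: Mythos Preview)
Your proposal is correct and follows essentially the same route as the paper: combine the two-sided bound from Theorem~\ref{thm: Efficiency and Reliability} with Lemma~\ref{lemma: eta2 to etah}, then absorb $|\eta_k+\eta_{\mathcal R}^{(2)}|$ via the assumed bound by $\alpha_{\eta_h^{(2)}}|\eta_h^{(2)}|$. Your remark about the denominator of $\overline{c}_h$ is also well taken---the paper's own proof indeed arrives at $(1+\alpha_{\eta_h^{(2)}})\frac{1}{1-b_h}$, so the $1+b_h$ in the statement of $\overline{c}_h$ is a typo.
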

	\begin{proof}
		From Theorem~\ref{thm: Efficiency and Reliability}, we know
		that $$\frac{1}{1+b_h}|\eta^{(2)}|\leq |J(u)-J(\tilde{u})|\leq \frac{1}{1-b_h}|\eta^{(2)}|,$$
		and, from Lemma~\ref{lemma: eta2 to etah},
		we know that 
		\begin{equation*}
		|\eta_h^{(2)}| - |\eta_k + \eta_{\mathcal{R}}^{(2)}|\leq|\eta^{(2)}|\leq|\eta_h^{(2)}| + |\eta_k + \eta_{\mathcal{R}}^{(2)}|.
		\end{equation*}
		Combining these two results leads to 
		\begin{align}
		\frac{1}{1+b_h}\left(|\eta_h^{(2)}|-|\eta_k + \eta_{\mathcal{R}}^{(2)}|\right)&\leq\frac{1}{1+b_h}|\eta^{(2)}|\nonumber\\
		 &\leq |J(u)-J(\tilde{u})| \label{eq: thm: inequality in proof}\\
		 &\leq \frac{1}{1-b_h}|\eta^{(2)}|\leq\frac{1}{1-b_h}\left(|\eta_h^{(2)}|+|\eta_k + \eta_{\mathcal{R}}^{(2)}|\right).\nonumber
		\end{align}
		This is the first statement of the theorem.
		
		From 
		\begin{equation}
		|\eta_k + \eta_{\mathcal{R}}^{(2)}| \leq \alpha_{\eta_h^{(2)}} |\eta_h^{(2)}|,
		\end{equation}
		we can deduce that
		\begin{equation*}
		|\eta_h^{(2)}|-|\eta_k + \eta_{\mathcal{R}}^{(2)}| \geq (1-\alpha_{\eta_h^{(2)}})|\eta_h^{(2)}|,
		\end{equation*}
		and 
		\begin{equation*}
		|\eta_h^{(2)}|+|\eta_k + \eta_{\mathcal{R}}^{(2)}| \leq (1+\alpha_{\eta_h^{(2)}})|\eta_h^{(2)}|.
		\end{equation*}
		Combining this with \eqref{eq: thm: inequality in proof}
		we obtain
		\begin{equation*}
		(1-\alpha_{\eta_h^{(2)}})\frac{1}{1+b_h}|\eta_h^{(2)}|\leq |J(u)-J(\tilde{u})| \leq(1+\alpha_{\eta_h^{(2)}})\frac{1}{1-b_h}|\eta_h^{(2)}|,
		\end{equation*}
		which provides us the second statement.
	\end{proof}
This shows that, for the discretization error estimator, we also get efficiency and reliability up to a higher-order term and a term which can be controlled by the accuracy of the non-linear solver. 
Additionally, if these two terms can be bounded by the inequality 
\begin{equation*}
|\eta_k + \eta_{\mathcal{R}}^{(2)}| \leq \alpha_{\eta_h^{(2)}} |\eta_h^{(2)}|,
\end{equation*}
then $|\eta_h^{(2)}|$ is an efficient and reliable error estimator as well.
%
%
\subsection{Effectivity Indices for Enriched Spaces}
\label{sec_Ieff_enriched_spaces}
In this subsection, we introduce and investigate effectivity 
indices. These were introduced in \cite{BaRei78} in order to measure
how well the error estimator approximates the true error. 
Ideally, the effectivity index approaches $1$ asymptotically under mesh 
refinement. However, due to cancellations with contributions with 
different signs, a (stronger) quality measure for mesh refinement 
utilizing the triangle inequality resulting into the indicator 
index was introduced in \cite{RiWi15_dwr}. As in both quality measures, the 
true error enters, i.e., the unknown solution $u$, either academic 
examples with known $u$ are taken, or numerical solutions obtained on highly 
refined meshes (in case the available computational power allows us to do so). 
Clearly, for complicated non-linear, coupled problems and multiphysics 
problems, the goal must be to have a previously tested error estimator, which 
is reliable and efficient, which is then applied to such complicated situations without the need to again measure effectivity and indicator indices.

\begin{Theorem}[Bounds on the effectivity index] \label{thm: Bounds: Ieff}
	We assume that $|J(u)-J(\tilde{u})|\not=0$.
	If
	the assumptions of Theorem~\ref{thm: Efficiency and Reliability} are fulfilled, 
	then, for the effectivity index $I_{\mathrm{eff},+}$ defined by
	\begin{equation} \label{def: Ieff+}
	I_{\mathrm{eff},+}:= \frac{\eta^{(2)}}{J(u)-J(\tilde{u})},
	\end{equation}
	we have the bounds
	\begin{equation} \label{eq: Ieff+ bounds}
	1-b_h\leq |I_{\mathrm{eff},+}| \leq 1+b_h.
	\end{equation}
	If the assumptions of Theorem~\ref{thm: etah Efficiency and Reliability} are fulfilled, 
	then, for the effectivity index $I_{\mathrm{eff}}$ defined by
	\begin{equation} \label{def: Ieff}
	I_{\mathrm{eff}}:= \frac{\eta_h^{(2)}}{J(u)-J(\tilde{u})},
	\end{equation}
	we have the bounds
	\begin{equation} \label{eq: Ieff bounds}
	\frac{1-b_h}{1+\alpha_{\eta_h^{(2)}}}\leq |I_{\mathrm{eff}}| \leq \frac{1+b_h}{1-\alpha_{\eta_h^{(2)}}}.
	\end{equation}
\end{Theorem}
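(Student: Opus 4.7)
The plan is to treat both inequalities as direct rearrangements of the efficiency/reliability estimates already proved in Theorem~\ref{thm: Efficiency and Reliability} and Theorem~\ref{thm: etah Efficiency and Reliability}. Since we are assuming $|J(u)-J(\tilde{u})|\neq 0$, division by the error is legitimate and the whole argument reduces to algebraic manipulation of the two-sided bounds; the main obstacle is nothing more than careful bookkeeping of the constants.

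For the bound \eqref{eq: Ieff+ bounds} on $I_{\mathrm{eff},+}$, the strategy is the following. Theorem~\ref{thm: Efficiency and Reliability} gives
$$\frac{1}{1+b_h}|\eta^{(2)}| \;\le\; |J(u)-J(\tilde{u})| \;\le\; \frac{1}{1-b_h}|\eta^{(2)}|.$$
I would multiply through to obtain the equivalent chain $(1-b_h)|J(u)-J(\tilde{u})| \le |\eta^{(2)}| \le (1+b_h)|J(u)-J(\tilde{u})|$. Dividing by $|J(u)-J(\tilde{u})|>0$ and recognising the quotient $|\eta^{(2)}|/|J(u)-J(\tilde{u})| = |I_{\mathrm{eff},+}|$ then yields the claimed two-sided bound. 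This is an if-and-only-if reformulation, so no additional ingredient is required beyond Theorem~\ref{thm: Efficiency and Reliability}.

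For the bound \eqref{eq: Ieff bounds} on $I_{\mathrm{eff}}$, I would proceed in exactly the same way, but starting from the second statement of Theorem~\ref{thm: etah Efficiency and Reliability}, namely
$$\frac{1-\alpha_{\eta_h^{(2)}}}{1+b_h}\,|\eta_h^{(2)}| \;\le\; |J(u)-J(\tilde{u})| \;\le\; \frac{1+\alpha_{\eta_h^{(2)}}}{1-b_h}\,|\eta_h^{(2)}|,$$
which is precisely the chain displayed in the proof of that theorem (under the additional hypothesis $|\eta_k+\eta_{\mathcal{R}}^{(2)}|\le \alpha_{\eta_h^{(2)}}|\eta_h^{(2)}|$). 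Inverting each side and multiplying by $|\eta_h^{(2)}|$ gives
$$\frac{1-b_h}{1+\alpha_{\eta_h^{(2)}}}\,|J(u)-J(\tilde{u})| \;\le\; |\eta_h^{(2)}| \;\le\; \frac{1+b_h}{1-\alpha_{\eta_h^{(2)}}}\,|J(u)-J(\tilde{u})|,$$
and one final division by $|J(u)-J(\tilde{u})|$ delivers the announced bound on $|I_{\mathrm{eff}}|$.

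The only delicate point is consistency with the constants $\underline{c}_h$ and $\overline{c}_h$ as they appear in the statement of Theorem~\ref{thm: etah Efficiency and Reliability}: since the relevant upper bound for $|J(u)-J(\tilde{u})|$ that actually enters the proof of that theorem carries a factor $1/(1-b_h)$ (as visible in the displayed chain of inequalities in its proof), I would take care to use that sharper form rather than the possibly mistyped version in the theorem statement. Apart from this small bookkeeping issue, the argument is essentially a one-line consequence of the previously established efficiency and reliability estimates.
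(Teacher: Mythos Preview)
Your proposal is correct and follows essentially the same approach as the paper: both parts are obtained by dividing the two-sided efficiency/reliability bounds from Theorem~\ref{thm: Efficiency and Reliability} and Theorem~\ref{thm: etah Efficiency and Reliability} by $|J(u)-J(\tilde{u})|$. Your observation about the constant $\overline{c}_h$ is also apt; the paper's proof indeed uses the factor $1/(1-b_h)$ as you indicate.
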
	
	\begin{proof}
		Here, we follow the ideas in \cite{EnLaWi20,endtmayer2020multi}.
		Theorem~\ref{thm: Efficiency and Reliability} provides the result
		\begin{align*}
		\frac{1}{1+b_h}|\eta^{(2)}|\leq|J(u)-J(\tilde{u})|\leq \frac{1}{1-b_h}|\eta^{(2)}|.
		\end{align*}
		Now we can divide the inequality from above by  $|J(u)-J(\tilde{u})|$, which leads to
		\begin{align*}
		\frac{1}{1+b_h}\left|\frac{\eta^{(2)}}{J(u)-J(\tilde{u})}\right|\leq 1 \leq \frac{1}{1-b_h}\left|\frac{\eta^{(2)}}{J(u)-J(\tilde{u})}\right|.
		\end{align*}
		From this, we can easily see the estimates
		\begin{equation*}
		1-b_h\leq |I_{\mathrm{eff},+}| \leq 1+b_h.
		\end{equation*}
		The second statement follows from Theorem~\ref{thm: etah Efficiency and Reliability}, i.e.
		\begin{equation*}
		(1-\alpha_{\eta_h^{(2)}})\frac{1}{1+b_h}|\eta_h^{(2)}|\leq |J(u)-J(\tilde{u})| \leq(1+\alpha_{\eta_h^{(2)}})\frac{1}{1-b_h}|\eta_h^{(2)}|,
		\end{equation*}
		where, by the same argument as above, we get
				\begin{align*}
		\frac{1-\alpha_{\eta_h^{(2)}}}{1+b_h}\left|\frac{\eta^{(2)}}{J(u)-J(\tilde{u})}\right|\leq 1 \leq \frac{1+\alpha_{\eta_h^{(2)}}}{1-b_h}\left|\frac{\eta^{(2)}}{J(u)-J(\tilde{u})}\right|.
		\end{align*}
		This is equivalent to 
		\begin{equation*} 
		\frac{1-b_h}{1+\alpha_{\eta_h^{(2)}}}\leq |I_{\mathrm{eff}}| \leq \frac{1+b_h}{1-\alpha_{\eta_h^{(2)}}}.
		\end{equation*}
	\end{proof}
Additionally to the effectivity indices above, we define the primal effectivity indices $I_{\mathrm{eff},p}$ and adjoint effectivity indices $I_{\mathrm{eff},a}$ as
		\begin{equation}
		I_{\mathrm{eff},p}:= \frac{\eta_{h,p}^{(2)}}{J(u)-J(\tilde{u})} \qquad \text{and} \qquad I_{\mathrm{eff},a}:= \frac{\eta_{h,a}^{(2)}}{J(u)-J(\tilde{u})}.
		\end{equation}

%
%
\subsection{Error Estimation using Interpolation Techniques}
\label{SubSec:Interpolation}
In Subsection~\ref{SubSec:EEEnriched}, we replaced $u$ and $z$ in \eqref{eq: theory_Error_Estimator} by solutions on the enriched space. In this section we will investigate replacing $u$ and $z$ by some arbitrary interpolations 
{$I_{h,u}: U_h \mapsto  U_h^{(2)}$}
and  
{$I_{h,z}: V_h \mapsto  V_h^{(2)}$} 
of $\tilde{u}$ and $\tilde{z}$, respectively. One such interpolation is presented in the work \cite{BeRa01,RanVi2013}. For instance, for $Q_1$ or $P_1$ elements, the nodes coincide with the nodes of $Q_2$ and $P_2$ finite elements on a coarser mesh, see Figure~\ref{fig: Interpol Q1} and Figure~\ref{fig: Interpol P1} for visualization, respectively.
\begin{figure} [H]
	\centering
	\begin{minipage}{0.4\linewidth}
		\definecolor{zzttqq}{rgb}{0.6,0.2,0}
\definecolor{xdxdff}{rgb}{0.49019607843137253,0.49019607843137253,1}
\definecolor{ududff}{rgb}{0.30196078431372547,0.30196078431372547,1}
\definecolor{uuuuuu}{rgb}{0.26666666666666666,0.26666666666666666,0.26666666666666666}
\begin{tikzpicture}[line cap=round,line join=round,>=triangle 45,x=2cm,y=2cm]
\clip(-0.1,-0.1) rectangle (2.1,2.1);
\fill[line width=2pt,color=zzttqq,fill=zzttqq,fill opacity=0.10000000149011612] (0,0) -- (2,0) -- (2,2) -- (0,2) -- cycle;
\fill[line width=2pt,color=zzttqq,fill=zzttqq,fill opacity=0.0] (0,1) -- (1,1) -- (1,2) -- (0,2) -- cycle;
\fill[line width=2pt,color=zzttqq,fill=zzttqq,fill opacity=0.0] (1,1) -- (2,1) -- (2,2) -- (1,2) -- cycle;
\fill[line width=2pt,color=zzttqq,fill=zzttqq,fill opacity=0.0] (0,0) -- (1,0) -- (1,1) -- (0,1) -- cycle;
\fill[line width=2pt,color=zzttqq,fill=zzttqq,fill opacity=0.0] (1,0) -- (2,0) -- (2,1) -- (1,1) -- cycle;
\draw [line width=2pt] (0,2)-- (2,2);
\draw [line width=2pt] (2,2)-- (2,0);
\draw [line width=2pt] (2,0)-- (0,0);
\draw [line width=2pt] (0,0)-- (0,2);
\draw [line width=2pt] (0,1)-- (2,1);
\draw [line width=2pt] (1,2)-- (1,0);
\draw [line width=2pt,color=uuuuuu] (0,0)-- (2,0);
\draw [line width=2pt,color=uuuuuu] (2,0)-- (2,2);
\draw [line width=2pt,color=uuuuuu] (2,2)-- (0,2);
\draw [line width=2pt,color=uuuuuu] (0,2)-- (0,0);
\draw [line width=2pt,color=uuuuuu] (0,1)-- (1,1);
\draw [line width=2pt,color=uuuuuu] (1,1)-- (1,2);
\draw [line width=2pt,color=uuuuuu] (1,2)-- (0,2);
\draw [line width=2pt,color=uuuuuu] (0,2)-- (0,1);
\draw [line width=2pt,color=uuuuuu] (1,1)-- (2,1);
\draw [line width=2pt,color=uuuuuu] (2,1)-- (2,2);
\draw [line width=2pt,color=uuuuuu] (2,2)-- (1,2);
\draw [line width=2pt,color=uuuuuu] (1,2)-- (1,1);
\draw [line width=2pt,color=uuuuuu] (0,0)-- (1,0);
\draw [line width=2pt,color=uuuuuu] (1,0)-- (1,1);
\draw [line width=2pt,color=uuuuuu] (1,1)-- (0,1);
\draw [line width=2pt,color=uuuuuu] (0,1)-- (0,0);
\draw [line width=2pt,color=uuuuuu] (1,0)-- (2,0);
\draw [line width=2pt,color=uuuuuu] (2,0)-- (2,1);
\draw [line width=2pt,color=uuuuuu] (2,1)-- (1,1);
\draw [line width=2pt,color=uuuuuu] (1,1)-- (1,0);
\begin{scriptsize}
\draw [fill=xdxdff] (0,0) circle (2.5pt);
\draw [fill=xdxdff] (2,2) circle (2.5pt);
\draw [fill=xdxdff] (0,2) circle (2.5pt);
\draw [fill=xdxdff] (2,0) circle (2.5pt);
\draw [fill=xdxdff] (1,1) circle (2.5pt);
\draw [fill=xdxdff] (0,1) circle (2.5pt);
\draw [fill=xdxdff] (2,1) circle (2.5pt);
\draw [fill=xdxdff] (1,2) circle (2.5pt);
\draw [fill=xdxdff] (1,0) circle (2.5pt);
\draw [fill=xdxdff] (2,2) circle (2.5pt);
\draw [fill=xdxdff] (0,2) circle (2.5pt);
\draw [fill=xdxdff] (1,2) circle (2.5pt);
\draw[color=uuuuuu] (1.5,1.5) node {$Q_1$};
\draw[color=uuuuuu] (0.5,1.5) node {$Q_1$};
\draw[color=uuuuuu] (1.5,0.5) node {$Q_1$};
\draw[color=uuuuuu] (0.5,0.5) node {$Q_1$};
\draw [fill=xdxdff] (0,2) circle (2.5pt);
\draw [fill=xdxdff] (2,2) circle (2.5pt);
\draw [fill=xdxdff] (1,2) circle (2.5pt);
\draw [fill=xdxdff] (1,1) circle (2.5pt);
\draw [fill=xdxdff] (0,1) circle (2.5pt);
\draw [fill=xdxdff] (2,1) circle (2.5pt);
\draw [fill=xdxdff] (1,1) circle (2.5pt);
\end{scriptsize}
\end{tikzpicture} 
	\end{minipage} 	{\centering\huge $\rightarrow$} \hfill
	\begin{minipage}{0.4\linewidth}
		\definecolor{zzttqq}{rgb}{0.6,0.2,0}
\definecolor{xdxdff}{rgb}{0.49019607843137253,0.49019607843137253,1}
\definecolor{ududff}{rgb}{0.30196078431372547,0.30196078431372547,1}
\definecolor{uuuuuu}{rgb}{0.26666666666666666,0.26666666666666666,0.26666666666666666}
\begin{tikzpicture}[line cap=round,line join=round,>=triangle 45,x=2cm,y=2cm]
\clip(-0.1,-0.1) rectangle (2.1,2.1);
\fill[line width=2pt,color=green,fill=green,fill opacity=0.1] (0,0) -- (2,0) -- (2,2) -- (0,2) -- cycle;
\draw [line width=2pt] (0,2)-- (2,2);
\draw [line width=2pt] (2,2)-- (2,0);
\draw [line width=2pt] (2,0)-- (0,0);
\draw [line width=2pt] (0,0)-- (0,2);
\begin{scriptsize}
\draw [fill=xdxdff] (0,0) circle (2.5pt);
\draw [fill=xdxdff] (2,2) circle (2.5pt);
\draw [fill=xdxdff] (0,2) circle (2.5pt);
\draw [fill=xdxdff] (2,0) circle (2.5pt);
\draw [fill=xdxdff] (1,1) circle (2.5pt);
\draw [fill=xdxdff] (0,1) circle (2.5pt);
\draw [fill=xdxdff] (2,1) circle (2.5pt);
\draw [fill=xdxdff] (1,2) circle (2.5pt);
\draw [fill=xdxdff] (1,0) circle (2.5pt);
\draw [fill=xdxdff] (2,2) circle (2.5pt);
\draw [fill=xdxdff] (0,2) circle (2.5pt);
\draw [fill=xdxdff] (1,2) circle (2.5pt);
\draw[color=uuuuuu] (1.25,1.25) node {$Q_2$};
\draw [fill=xdxdff] (0,2) circle (2.5pt);
\draw [fill=xdxdff] (2,2) circle (2.5pt);
\draw [fill=xdxdff] (1,2) circle (2.5pt);
\draw [fill=xdxdff] (1,1) circle (2.5pt);
\draw [fill=xdxdff] (0,1) circle (2.5pt);
\draw [fill=xdxdff] (2,1) circle (2.5pt);
\draw [fill=xdxdff] (1,1) circle (2.5pt);
\end{scriptsize}
\end{tikzpicture}	
	\end{minipage}
\caption{Visualization of how the degrees of freedom are interpolated on elements 
of higher order with a coarser grid for $Q_1$ finite elements. \label{fig: Interpol Q1}}
\end{figure}
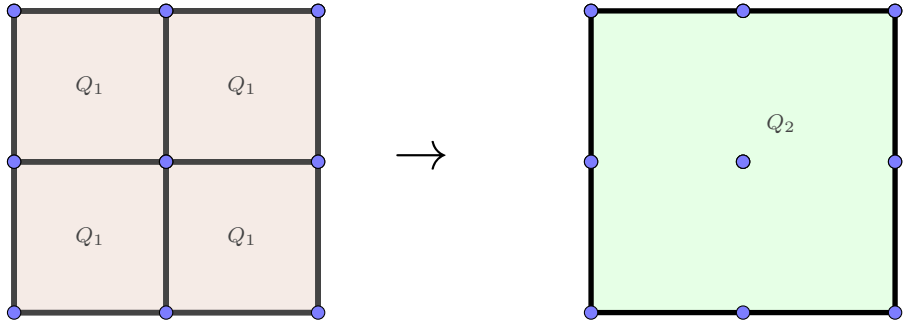

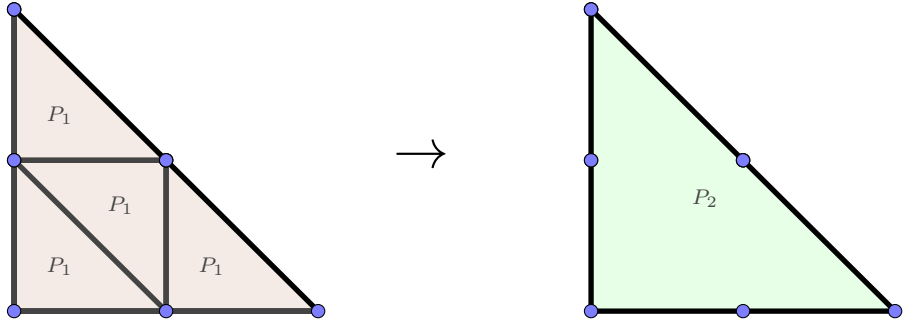
\begin{figure} [H]
	\centering
	\begin{minipage}{0.4\linewidth}
		\definecolor{zzttqq}{rgb}{0.6,0.2,0}
\definecolor{xdxdff}{rgb}{0.49019607843137253,0.49019607843137253,1}
\definecolor{ududff}{rgb}{0.30196078431372547,0.30196078431372547,1}
\definecolor{uuuuuu}{rgb}{0.26666666666666666,0.26666666666666666,0.26666666666666666}
\begin{tikzpicture}[line cap=round,line join=round,>=triangle 45,x=2cm,y=2cm]
\clip(-0.1,-0.1) rectangle (2.1,2.1);
\fill[line width=2pt,color=zzttqq,fill=zzttqq,fill opacity=0.1] (0,0) -- (2,0)  -- (0,2) -- cycle;
\draw [line width=2pt] (2,0)-- (0,0);
\draw [line width=2pt] (0,0)-- (0,2);
\draw [line width=2pt] (2,0)-- (0,2);
\fill[line width=2pt,color=zzttqq,fill=zzttqq,fill opacity=0.0] (0,0) -- (1,0) -- (1,1) -- (0,1) -- cycle;
\draw [line width=2pt,color=uuuuuu] (0,0)-- (2,0);
\draw [line width=2pt,color=uuuuuu] (0,2)-- (0,0);
\draw [line width=2pt,color=uuuuuu] (0,1)-- (1,1);
\draw [line width=2pt,color=uuuuuu] (0,2)-- (0,1);
\draw [line width=2pt,color=uuuuuu] (0,0)-- (1,0);
\draw [line width=2pt,color=uuuuuu] (1,0)-- (1,1);
\draw [line width=2pt,color=uuuuuu] (1,1)-- (0,1);
\draw [line width=2pt,color=uuuuuu] (0,1)-- (0,0);
\draw [line width=2pt,color=uuuuuu] (1,0)-- (2,0);
\draw [line width=2pt,color=uuuuuu] (1,0)-- (0,1);
\draw [line width=2pt,color=uuuuuu] (1,1)-- (1,0);
\begin{scriptsize}
\draw [fill=xdxdff] (0,0) circle (2.5pt);
\draw [fill=xdxdff] (0,2) circle (2.5pt);
\draw [fill=xdxdff] (2,0) circle (2.5pt);
\draw [fill=xdxdff] (1,1) circle (2.5pt);
\draw [fill=xdxdff] (0,1) circle (2.5pt);
\draw [fill=xdxdff] (1,0) circle (2.5pt);
\draw [fill=xdxdff] (0,2) circle (2.5pt);
\draw[color=uuuuuu] (0.7,0.7) node {$P_1$};
\draw[color=uuuuuu] (0.3,0.3) node {$P_1$};
\draw[color=uuuuuu] (0.3,1.3) node {$P_1$};
\draw[color=uuuuuu] (1.3,0.3) node {$P_1$};
\draw [fill=xdxdff] (0,2) circle (2.5pt);
\draw [fill=xdxdff] (1,1) circle (2.5pt);
\draw [fill=xdxdff] (0,1) circle (2.5pt);
\draw [fill=xdxdff] (1,1) circle (2.5pt);
\end{scriptsize}
\end{tikzpicture} 
	\end{minipage} 	{\centering\huge $\rightarrow$} \hfill
	\begin{minipage}{0.4\linewidth}
		\definecolor{zzttqq}{rgb}{0.6,0.2,0}
\definecolor{xdxdff}{rgb}{0.49019607843137253,0.49019607843137253,1}
\definecolor{ududff}{rgb}{0.30196078431372547,0.30196078431372547,1}
\definecolor{uuuuuu}{rgb}{0.26666666666666666,0.26666666666666666,0.26666666666666666}
\begin{tikzpicture}[line cap=round,line join=round,>=triangle 45,x=2cm,y=2cm]
\clip(-0.1,-0.1) rectangle (2.1,2.1);
\fill[line width=2pt,color=green,fill=green,fill opacity=0.1] (0,0) -- (2,0)  -- (0,2) -- cycle;
\draw [line width=2pt] (2,0)-- (0,0);
\draw [line width=2pt] (0,0)-- (0,2);
\draw [line width=2pt] (2,0)-- (0,2);
\begin{scriptsize}
\draw [fill=xdxdff] (0,0) circle (2.5pt);
\draw [fill=xdxdff] (0,2) circle (2.5pt);
\draw [fill=xdxdff] (2,0) circle (2.5pt);
\draw [fill=xdxdff] (1,1) circle (2.5pt);
\draw [fill=xdxdff] (0,1) circle (2.5pt);
\draw [fill=xdxdff] (1,0) circle (2.5pt);
\draw [fill=xdxdff] (0,2) circle (2.5pt);
\draw[color=uuuuuu] (0.75,0.75) node {$P_2$};
\draw [fill=xdxdff] (0,2) circle (2.5pt);
\draw [fill=xdxdff] (1,1) circle (2.5pt);
\draw [fill=xdxdff] (0,1) circle (2.5pt);
\draw [fill=xdxdff] (1,1) circle (2.5pt);
\end{scriptsize}
\end{tikzpicture}	
	\end{minipage}
	\caption{Visualization of how the degrees of freedom are interpolated on elements of higher order with a coarser grid for $P_1$ finite elements. \label{fig: Interpol P1}}
\end{figure}
Now, let $I_{h,u}\tilde{u} \in U_h^{(2)}$ be an arbitrary interpolation, approximating ${u}_h^{(2)} \in U_h^{(2)}$ which solves \eqref{eq: enriched primal}. Furthermore let, $I_{h,z}\tilde{z} \in V_h^{(2)}$ be an interpolation, approximating  ${z}_h^{(2)} \in V_h^{(2)}$, which solves \eqref{eq: enriched Adjointproblem}.
\begin{Theorem}\label{thm: discrete Error Representation Interpolation}
	Let us assume the assumptions of Theorem~\ref{thm: Error Representation} and let   $\tilde{u} \in U_h$ and $ \tilde{z} \in V_h$ be arbitrary but fixed. 
	Then for $I_{h,u}\tilde{u} \in U_h^{(2)}$ and $I_{h,z}\tilde{z} \in V_h^{(2)}$ 
	holds
	\begin{align} \label{eq: discrete error representation interpolation }
	\begin{split}
	J(I_{h,u}\tilde{u})-J(\tilde{u})&= \frac{1}{2}\left(\rho(\tilde{u})(I_{h,z}\tilde{z}-\tilde{z})+\rho^*(\tilde{u},\tilde{z})(I_{h,u}\tilde{u}-\tilde{u})\right) 
	-\rho (\tilde{u})(\tilde{z}) \\ &-\rho(I_{h,u}\tilde{u})(\frac{I_{h,z}\tilde{z}+ \tilde{z}}{2})+\frac{1}{2}\rho^*(I_{h,u}\tilde{u},I_{h,z}\tilde{z})(I_{h,u}\tilde{u}-\tilde{u})+ \tilde{\mathcal{R}}^{(3)(2)}.
	\end{split}
	\end{align}
	The term $\tilde{\mathcal{R}}^{(3)(2)}$ is given by
	\begin{equation*}
	\begin{split}	\label{Error Estimator: Remainderterm1}
	\tilde{\mathcal{R}}^{(3)(2)}:=\frac{1}{2}\int_{0}^{1}[&J_h'''(\tilde{u}+se_{I_{h,u}})(e_{I_{h,u}},e_{I_{h,u}},e_{I_{h,u}}) \\
	-&(\mathcal{A}^{(2)})'''(\tilde{u}+se_{I_{h,u}})(e_{I_{h,u}},e_{I_{h,u}},e_{I_{h,u}},\tilde{z}+se_{I_{h,z}}^*)\\
	-&3(\mathcal{A}^{(2)})''(\tilde{u}+se_{I_{h,u}})(e_{I_{h,u}},e_{I_{h,u}},e_{I_{h,z}}^*)]s(s-1)\,ds,
	\end{split} 
	\end{equation*}
	with 
	\begin{equation*}
		e_{I_{h,u}}=I_{h,u}\tilde{u}-\tilde{u} \qquad \text{and} \qquad e_{I_{h,z}}^* =I_{h,z}\tilde{z}-\tilde{z}.
	\end{equation*}

	\begin{proof}
		The proof is similar to the proof of \cite{RanVi2013,endtmayer2023goal} and Theorem~\ref{thm: Error Representation} and Theorem~\ref{thm: error identity}.
		First, we define $x_I := (I_{h,u}\tilde{u},I_{h,z}\tilde{z}) \in  X_h^{(2)}:=U_h^{(2)} \times V_h^{(2)}$ and $\tilde{x}:=(\tilde{u},\tilde{z}) \in X_h^{(2)}$.
		Since  $\mathcal{A}^{(2)} \in \mathcal{C}^3(U_h^{(2)},V_h^{(2)})$ and the $J \in
		\mathcal{C}^3(U_h^{(2)},\mathbb{R})$, we can define a discrete Lagrange functional
		\begin{equation*}
		\mathcal{L}_h(\hat{x}):= J_h(\hat{u})-\mathcal{A}^{(2)}(\hat{u})(\hat{z}) \quad \forall (\hat{u},\hat{z})=:\hat{x} \in X_h^{(2)},
		\end{equation*}
		which
		belongs to
		$\mathcal{C}^3(X_h^{(2)},\mathbb{R})$. Following the same steps as in Theorem~\ref{thm: error identity}, we get
		\begin{equation*}
		\mathcal{L}_h(x_I)-\mathcal{L}(\tilde{x})=\int_{0}^{1} \mathcal{L}'(\tilde{x}+s(x_I-\tilde{x}))(x_I-\tilde{x})\,ds.
		\end{equation*}
		Using  the trapezoidal rule 
		\begin{equation*}
		\int_{0}^{1}f(s)\,ds =\frac{1}{2}(f(0)+f(1))+ \frac{1}{2} \int_{0}^{1}f''(s)s(s-1)\,ds,
		\end{equation*}
		with $f(s):= \mathcal{L}_h'(\tilde{x}+s(x_I-\tilde{x}))(x_I-\tilde{x})$, cf. \cite{RanVi2013}, we obtain
		\begin{align*}
		\mathcal{L}_h(x_I)-\mathcal{L}(\tilde{x_I}) =& \frac{1}{2}(\mathcal{L}_h'(x_I)(x_I-\tilde{x}) +\mathcal{L}'(\tilde{x})(x_I-\tilde{x})) + \mathcal{R}^{(3)}.
		\end{align*}
		Furthermore, it follows that
		\begin{align*}
		J_h(I_{h,u}\tilde{u})-J_h(\tilde{u})=&\mathcal{L}_h(x_I)-\mathcal{L}(\tilde{x}) +{\mathcal{A}^{(2)}(I_{h,u}\tilde{u})(I_{h,z}\tilde{z}) }- \mathcal{A}^{(2)}(\tilde{u})(\tilde{z}) \\
		=& \frac{1}{2}\left(\mathcal{L}_h'(x_I)(x_I-\tilde{x}) +\mathcal{L}_h'(\tilde{x})(x_I-\tilde{x})\right) \\ 
		+&\underbrace{\mathcal{A}^{(2)}(I_{h,u}\tilde{u})(I_{h,z}\tilde{z})}_{=-\rho_h(I_{h,u}\tilde{u})(I_{h,z}\tilde{z})} \underbrace{-\mathcal{A}^{(2)}(\tilde{u})(\tilde{z})}_{=\rho_h(\tilde{u})(\tilde{z})}+ \tilde{\mathcal{R}}^{(3)(2)}.
		\end{align*}
		Investigating the part $\left(\mathcal{L}_h'(x_I)(x_I-\tilde{x}) +\mathcal{L}_h'(\tilde{x})(x_I-\tilde{x})\right)$, we observe
		\begin{align*}
		\mathcal{L}_h'(x_I)(x_I-\tilde{x}) +\mathcal{L}_h'(\tilde{x})(x_I-\tilde{x}) = & \underbrace{J_h'(I_{h,u}\tilde{u})(I_{h,u}\tilde{u}-\tilde{u})-(\mathcal{A}^{(2)})'(I_{h,u}\tilde{u})(I_{h,u}\tilde{u}-\tilde{u},I_{h,z}\tilde{z})}_{=\rho_h^*(I_{h,u}\tilde{u},I_{h,z}\tilde{z})(I_{h,u}\tilde{u}-\tilde{u})} \\
		&\underbrace{-{\mathcal{A}^{(2)}(I_{h,u}\tilde{u})(I_{h,z}\tilde{z}-\tilde{z})}}_{=\rho_h(I_{h,u}\tilde{u})(I_{h,z}\tilde{z}-\tilde{z})}\\ &+\underbrace{J_h'(\tilde{u})(I_{h,u}\tilde{u}-\tilde{u})-(\mathcal{A}^{(2)})'(\tilde{u})(I_{h,u}\tilde{u}-\tilde{u},\tilde{z})}_{=\rho_h^*(\tilde{u},\tilde{z})(I_{h,u}\tilde{u}-\tilde{u})} \\ &\underbrace{-\mathcal{A}^{(2)}(\tilde{u})(I_{h,z}\tilde{z}-\tilde{z})}_{=\rho_h(\tilde{u})(I_{h,z}\tilde{z}-\tilde{z})}.
		\end{align*}
		To sum up, we get
		\begin{align*}
		J_h(I_{h,u}\tilde{u})-J_h(\tilde{u}) &=\frac{1}{2}\left(\rho_h(\tilde{u})(I_{h,z}\tilde{z}-\tilde{z})+\rho_h^*(\tilde{u},\tilde{z})(I_{h,u}\tilde{u}-\tilde{u})\right)\\
		&+\frac{1}{2}\left(\rho_h(I_{h,u}\tilde{u})(I_{h,z}\tilde{z}-\tilde{z})+\rho_h^*((I_{h,u}\tilde{u},I_{h,z}\tilde{z})(I_{h,u}\tilde{u}-\tilde{u})\right)\\
		&-\rho_h(I_{h,u}\tilde{u})(I_{h,z}\tilde{z})+\rho_h(\tilde{u})(\tilde{z})+ \tilde{\mathcal{R}}^{(3)(2)}.
		\end{align*}
		After simplifications, we get 
		\begin{align*}
		J_h(I_{h,u}\tilde{u})-J_h(\tilde{u}) &=\frac{1}{2}\left(\rho_h(\tilde{u})(I_{h,z}\tilde{z}-\tilde{z})+\rho_h^*(\tilde{u},\tilde{z})(I_{h,u}\tilde{u}-\tilde{u})\right)\\
		&-\frac{1}{2}\rho_h(I_{h,u}\tilde{u})(I_{h,z}\tilde{z}+\tilde{z})+\frac{1}{2}\rho_h^*((I_{h,u}\tilde{u},I_{h,z}\tilde{z})(I_{h,u}\tilde{u}-\tilde{u})\\
		&+\rho_h(\tilde{u})(\tilde{z})+ \tilde{\mathcal{R}}^{(3)(2)}.
		\end{align*}
		On the discrete spaces $U_h^{(2)}$ and $V_h^{(2)}$, we have $\rho_h=\rho$ and $\rho_h^*=\rho^*$, hence we get
		\begin{align*}
		\begin{split}
		J(I_{h,u}\tilde{u})-J(\tilde{u})&= \frac{1}{2}\left(\rho(\tilde{u})(I_{h,z}\tilde{z}-\tilde{z})+\rho^*(\tilde{u},\tilde{z})(I_{h,u}\tilde{u}-\tilde{u})\right) 
		-\rho (\tilde{u})(\tilde{z}) \\ &-\rho(I_{h,u}\tilde{u})(\frac{I_{h,z}\tilde{z}+ \tilde{z}}{2})+\frac{1}{2}\rho^*(I_{h,u}\tilde{u},I_{h,z}\tilde{z})(I_{h,u}\tilde{u}-\tilde{u})+ \tilde{\mathcal{R}}^{(3)(2)}.
		\end{split}
		\end{align*}
		This concludes the proof.
	\end{proof}
\end{Theorem}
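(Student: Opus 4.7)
The plan is to mirror the Lagrangian argument of Theorem~\ref{thm: error identity}, but to carry it out entirely in the enriched discrete spaces so that we only need the regularity hypotheses $\mathcal{A}^{(2)} \in \mathcal{C}^3(U_h^{(2)}, V_h^{(2)*})$ and $J_h \in \mathcal{C}^3(U_h^{(2)}, \mathbb{R})$. I would introduce $X_h^{(2)} := U_h^{(2)} \times V_h^{(2)}$, the pair $x_I := (I_{h,u}\tilde{u}, I_{h,z}\tilde{z})$, and $\tilde{x} := (\tilde{u}, \tilde{z})$, and define the discrete Lagrangian $\mathcal{L}_h(\hat{u},\hat{z}) := J_h(\hat{u}) - \mathcal{A}^{(2)}(\hat{u})(\hat{z})$, which is $\mathcal{C}^3$ on $X_h^{(2)}$.

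Next I would write $\mathcal{L}_h(x_I) - \mathcal{L}_h(\tilde{x}) = \int_0^1 \mathcal{L}_h'(\tilde{x} + s(x_I - \tilde{x}))(x_I - \tilde{x})\,ds$ by the fundamental theorem of calculus, and then apply the trapezoidal rule with its integral remainder, exactly as in the proof of Theorem~\ref{thm: error identity}, to obtain
\[
\mathcal{L}_h(x_I) - \mathcal{L}_h(\tilde{x}) = \tfrac{1}{2}\bigl(\mathcal{L}_h'(x_I)(x_I - \tilde{x}) + \mathcal{L}_h'(\tilde{x})(x_I - \tilde{x})\bigr) + \tilde{\mathcal{R}}^{(3)(2)},
\]
with $\tilde{\mathcal{R}}^{(3)(2)}$ arising from the second derivative of $s \mapsto \mathcal{L}_h'(\tilde{x} + se_I)(e_I)$ and yielding the stated triple-derivative remainder after expansion. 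The crucial difference from Theorem~\ref{thm: error identity} is that $x_I$ is an \emph{interpolation}, not a Galerkin solution, so the term $\mathcal{L}_h'(x_I)(x_I - \tilde{x})$ no longer vanishes; this is precisely what will produce the two extra contributions $-\rho(I_{h,u}\tilde{u})(\frac{I_{h,z}\tilde{z}+\tilde{z}}{2})$ and $\frac{1}{2}\rho^*(I_{h,u}\tilde{u},I_{h,z}\tilde{z})(I_{h,u}\tilde{u}-\tilde{u})$ that are absent in the classical identity.

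To make these terms explicit, I would rewrite $J_h(I_{h,u}\tilde{u}) - J_h(\tilde{u}) = \mathcal{L}_h(x_I) - \mathcal{L}_h(\tilde{x}) + \mathcal{A}^{(2)}(I_{h,u}\tilde{u})(I_{h,z}\tilde{z}) - \mathcal{A}^{(2)}(\tilde{u})(\tilde{z})$, and expand $\mathcal{L}_h'(x_I)$ and $\mathcal{L}_h'(\tilde{x})$ componentwise. Each $\partial_u$-component gives a $\rho_h^*$ term and each $\partial_z$-component gives a $\rho_h$ term, so grouping collects the four residual contributions: two evaluated at $\tilde{x}$ (the standard primal and adjoint residual terms) and two evaluated at $x_I$ (the interpolation-induced terms). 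The two bare Lagrangian contributions $\mathcal{A}^{(2)}(I_{h,u}\tilde{u})(I_{h,z}\tilde{z})$ and $-\mathcal{A}^{(2)}(\tilde{u})(\tilde{z})$ combine with the half-terms from $\mathcal{L}_h'(x_I)$ to give precisely $-\rho(I_{h,u}\tilde{u})(\tfrac{I_{h,z}\tilde{z}+\tilde{z}}{2})$ after simplification; this bookkeeping step is the main obstacle, as it is easy to drop a sign or a factor of $\tfrac{1}{2}$.

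Finally, I would invoke the compatibility hypotheses \eqref{eq: A=Ah}--\eqref{eq: J'=Jh'} to replace $J_h$ by $J$, $\mathcal{A}^{(2)}$ by $\mathcal{A}$, and $\rho_h,\rho_h^*$ by $\rho,\rho^*$ on $U_h^{(2)} \times V_h^{(2)}$, yielding the asserted identity \eqref{eq: discrete error representation interpolation }. No saturation or Galerkin orthogonality is needed here, since the statement is a pure interpolation identity; the theorem is the analogue of Theorem~\ref{thm: Error Representation} with $(u_h^{(2)}, z_h^{(2)})$ replaced by $(I_{h,u}\tilde{u}, I_{h,z}\tilde{z})$, at the price of two additional residual terms that quantify the failure of the interpolants to solve the enriched problems.
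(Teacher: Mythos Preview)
Your proposal is correct and follows essentially the same approach as the paper: define the discrete Lagrangian on $X_h^{(2)}$, apply the fundamental theorem of calculus and the trapezoidal rule, observe that $\mathcal{L}_h'(x_I)(x_I-\tilde{x})$ does not vanish because $x_I$ is an interpolant rather than a Galerkin solution, expand the four residual contributions, combine with the bare $\mathcal{A}^{(2)}$-terms to produce $-\rho(I_{h,u}\tilde{u})(\tfrac{I_{h,z}\tilde{z}+\tilde{z}}{2})$, and finally invoke the compatibility relations \eqref{eq: A=Ah}--\eqref{eq: J'=Jh'} to pass from $J_h,\rho_h,\rho_h^*$ to $J,\rho,\rho^*$. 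Your identification of the bookkeeping step as the only delicate point matches the paper's presentation.
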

With this identity, we can define the error estimator for interpolation $\eta_{I}$ as
\begin{equation}\label{eq: def etaI}
	\begin{split}
		\eta_{I}&:= \frac{1}{2}\left(\rho(\tilde{u})(I_{h,z}\tilde{z}-\tilde{z})+\rho^*(\tilde{u},\tilde{z})(I_{h,u}\tilde{u}-\tilde{u})\right) 
		-\rho (\tilde{u})(\tilde{z}) \\ &-\rho(I_{h,u}\tilde{u})(\frac{I_{h,z}\tilde{z}+ \tilde{z}}{2})+\frac{1}{2}\rho^*(I_{h,u}\tilde{u},I_{h,z}\tilde{z})(I_{h,u}\tilde{u}-\tilde{u})+ \tilde{\mathcal{R}}^{(3)(2)}.
	\end{split}
\end{equation}
For further results, we require again a saturation assumption for the interpolation.
\begin{Assumption}[Saturation assumption for interpolations] \label{Ass: Interpolation}
	Let $I_{h,u}$ be an interpolation. Then the saturation assumption is fulfilled if 
	there exists a $b_0 \in (0,1)$ and a $b_h^I\in (0,b_0)$ with 
	\begin{equation} \label{eq: saturation interpol.}
		|J(u)-J(I_{h,u}\tilde{u})| \leq b_h^I |J(\tilde{u})-J(u)|.
	\end{equation} 
\end{Assumption}

\begin{Theorem} \label{thm: Interpolation Efficiency and Reliability}
	Let Assumption~\ref{Ass: Interpolation} be satisfied. Additionally let all assumptions of Theorem~\ref{thm: discrete Error Representation Interpolation} be fulfilled.
	Then the error estimator $\eta_{I}$ defined in \eqref{eq: def etaI}, i.e.
\begin{equation*}
	\begin{split}
		\eta_{I}&:= \frac{1}{2}\left(\rho(\tilde{u})(I_{h,z}\tilde{z}-\tilde{z})+\rho^*(\tilde{u},\tilde{z})(I_{h,u}\tilde{u}-\tilde{u})\right) 
		-\rho (\tilde{u})(\tilde{z}) \\ &-\rho(I_{h,u}\tilde{u})(\frac{I_{h,z}\tilde{z}+ \tilde{z}}{2})+\frac{1}{2}\rho^*(I_{h,u}\tilde{u},I_{h,z}\tilde{z})(I_{h,u}\tilde{u}-\tilde{u})+ \tilde{\mathcal{R}}^{(3)(2)}.
	\end{split}
\end{equation*}
	is efficient and reliable with the constants $\underline{c}=\frac{1}{1+b_h^I}$ and $\overline{c}=\frac{1}{1-b_h^I}$.
	\begin{proof}
		The proof follows the same steps as in Theorem~\ref{thm: Efficiency and Reliability}.
	\end{proof}
\end{Theorem}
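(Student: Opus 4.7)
The plan is to mirror the proof of Theorem on Efficiency and Reliability for the enriched estimator $\eta^{(2)}$, only now replacing $u_h^{(2)}$ by the interpolant $I_{h,u}\tilde{u}$ and replacing Assumption on Saturation by Assumption on Interpolation Saturation. The key structural input is that the error identity from Theorem on discrete Error Representation Interpolation can be rewritten in the clean form
\begin{equation*}
J(I_{h,u}\tilde{u}) - J(\tilde{u}) = \eta_I,
\end{equation*}
since, by construction, $\eta_I$ is exactly the right-hand side of \eqref{eq: discrete error representation interpolation }. This is the direct analogue of the identity $J(u_h^{(2)}) - J(\tilde{u}) = \eta^{(2)} - \mathcal{R}_h^{(3)} + \mathcal{R}_h^{(3)}$ used implicitly in Theorem on Efficiency and Reliability.

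Next, I would split the true error via the interpolant:
\begin{equation*}
J(u) - J(\tilde{u}) = \bigl(J(u) - J(I_{h,u}\tilde{u})\bigr) + \bigl(J(I_{h,u}\tilde{u}) - J(\tilde{u})\bigr) = \bigl(J(u) - J(I_{h,u}\tilde{u})\bigr) + \eta_I,
\end{equation*}
take absolute values, and apply the triangle inequality in both directions. For reliability, the upward triangle inequality combined with \eqref{eq: saturation interpol.} gives
\begin{equation*}
|J(u)-J(\tilde{u})| \le |J(u)-J(I_{h,u}\tilde{u})| + |\eta_I| \le b_h^I\,|J(u)-J(\tilde{u})| + |\eta_I|,
\end{equation*}
which, after rearranging, yields $|J(u)-J(\tilde{u})| \le (1-b_h^I)^{-1}|\eta_I|$, i.e.\ $\overline{c} = 1/(1-b_h^I)$. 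For efficiency, the reverse triangle inequality produces
\begin{equation*}
|J(u)-J(\tilde{u})| \ge |\eta_I| - |J(u)-J(I_{h,u}\tilde{u})| \ge |\eta_I| - b_h^I\,|J(u)-J(\tilde{u})|,
\end{equation*}
and rearranging gives $|J(u)-J(\tilde{u})| \ge (1+b_h^I)^{-1}|\eta_I|$, i.e.\ $\underline{c} = 1/(1+b_h^I)$.

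There is no real obstacle beyond bookkeeping: the only nontrivial ingredient is the clean error identity $J(I_{h,u}\tilde{u}) - J(\tilde{u}) = \eta_I$ provided by Theorem on discrete Error Representation Interpolation (note that $\tilde{\mathcal{R}}^{(3)(2)}$ is absorbed into the definition of $\eta_I$), after which the argument is purely a triangle inequality plus the saturation bound $b_h^I < b_0 < 1$ that guarantees $1 - b_h^I > 0$ so that the division in the reliability bound is legitimate. Since both steps are identical in form to those in the proof of Theorem on Efficiency and Reliability, the proof is essentially a one-line reduction to that earlier argument with the substitutions $u_h^{(2)} \leftrightarrow I_{h,u}\tilde{u}$, $\eta^{(2)} \leftrightarrow \eta_I$, and $b_h \leftrightarrow b_h^I$, which is exactly what the author's remark ``the proof follows the same steps as in Theorem~\ref{thm: Efficiency and Reliability}'' indicates.
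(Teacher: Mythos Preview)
Your proposal is correct and follows exactly the approach the paper intends: you identify $J(I_{h,u}\tilde{u}) - J(\tilde{u}) = \eta_I$ from Theorem~\ref{thm: discrete Error Representation Interpolation}, split the true error through $J(I_{h,u}\tilde{u})$, and then apply the saturation bound \eqref{eq: saturation interpol.} with the triangle inequality in both directions, precisely mirroring the proof of Theorem~\ref{thm: Efficiency and Reliability} under the substitutions $u_h^{(2)} \mapsto I_{h,u}\tilde{u}$, $\eta^{(2)} \mapsto \eta_I$, $b_h \mapsto b_h^I$.
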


\subsection{The Parts of the Error Estimator for Interpolations}
\label{subsec: Parts of EE for Interpolation}

If Assumption~\ref{Ass: Interpolation} is fulfilled, Theorem \ref{thm: Interpolation Efficiency and Reliability} shows us that the error estimator $\eta_I$ defined in \eqref{eq: def_eta(2)} is efficient and reliable as in the case of enriched spaces. 
We split $\eta_I$ into  the following parts
\begin{equation} \label{def: etaI Parts}
	\begin{split}
		\eta_{I}&:= \underbrace{\frac{1}{2}\left(\rho(\tilde{u})(I_{h,z}\tilde{z}-\tilde{z})+\rho^*(\tilde{u},\tilde{z})(I_{h,u}\tilde{u}-\tilde{u})\right) }_{:=\eta_{I,h}}
		\underbrace{-\rho (\tilde{u})(\tilde{z})}_{:=\eta_k} \\ &\underbrace{-\rho(I_{h,u}\tilde{u})(\frac{I_{h,z}\tilde{z}+ \tilde{z}}{2})}_{:=\eta_{I_u}}+\underbrace{\frac{1}{2}\rho^*(I_{h,u}\tilde{u},I_{h,z}\tilde{z})(I_{h,u}\tilde{u}-\tilde{u})}_{:=\eta_{I_z}}+ \underbrace{\tilde{\mathcal{R}}^{(3)(2)}}_{:=\eta_{\mathcal{R}_I}}.
	\end{split}
\end{equation}
Additionally, we mention that if $I_{h,u}\tilde{u}=u_h^{(2)}$ solving \eqref{eq: enriched primal} and if $I_{h,z}\tilde{z}=z_h^{(2)}$ solving \eqref{eq: enriched Adjointproblem}, then $\eta_I=\eta^{(2)}$.

\subsubsection{The Remainder Part $\eta_{\mathcal{R}_I}$ or linearization error estimator for interpolation.}
As for the enriched chase, $\eta_{\mathcal{R}_I}$ is of higher order, and usually is neglected in literature. However, we would like to mention, that here, higher-order means with respect to $I_{h,z}\tilde{z}-\tilde{z}$ and  $I_{h,u}\tilde{u}-\tilde{u}$, respectively.
\subsubsection{The Iteration Error Estimator $\eta_k$ for Interpolation}
The iteration error estimator for interpolation and enriched solutions is identical. Therefore we again use the same symbol. As already described in Section~\ref{SubSubSec: etak}, this error estimator can be used to stop the linear or non-linear solver.
\subsubsection{The Primal Interpolation Error Estimator $\eta_{I_u}$}
This part resembles the error which 
is introduced by
the interpolation of the primal variable $u$ in the enriched space. If $I_{h,u}\tilde{u}=u_h^{(2)}$, where $u_h^{(2)}$ solves \eqref{eq: enriched primal}, then $\eta_{I_u}=0$. This part can be used to decide whether $I_{h,u}\tilde{u}$ or $u_h^{(2)}$ should be used to construct the error estimator.
\subsubsection{The Adjoint Interpolation Error Estimator $\eta_{I_z}$}
The adjoint interpolation error estimator $\eta_{I_z}$ represents the error variable $z$ in the enriched. Again, if $I_{h,z}\tilde{z}=z_h^{(2)}$, where $z_h^{(2)}$ solves \eqref{eq: enriched Adjointproblem}, then $\eta_{I_z}=0$. As for the primal interpolation error estimator, we can decide whether $I_{h,z}\tilde{z}$ or $z_h^{(2)}$ is used during the computation.
\subsubsection{The Iteration Error Estimator $\eta_{I,h}$ for Interpolation}
The part 
\begin{equation} \label{eq: def primal and adjoint error estimator Interpolation}
	\eta_{I,h}:=\frac{1}{2}\left(\rho(\tilde{u})(I_{h,z}\tilde{z}-\tilde{z})+\rho^*(\tilde{u},\tilde{z})(I_{h,u}\tilde{u}-\tilde{u})\right),
\end{equation}
represents the discretization error of the error estimator as discussed in \cite{RanVi2013,endtmayer2021reliability}.
 Here,
 the part $\rho(\tilde{u})(I_{h,z}\tilde{z}-\tilde{z})$ and $\rho^*(\tilde{u},\tilde{z})(I_{h,u}\tilde{u}-\tilde{u})$ do not necessarily coincide, even if the given problem is affine linear. As in the enriched case, this part of the error estimator can be localized, which is shown in Section~\ref{SubSec:Localization}. Furthermore, this error estimator can be used to adapt the finite dimensional subspace $U_h$ or, in case of finite elements, the mesh $\mathcal{T}_h$.

\begin{Theorem}[Efficiency and Reliability Result for $\eta_{I,h}$]\label{thm: etah Efficiency and Reliability Interpolation}
	For the discretization error estimator it holds
	\begin{equation*}
		\frac{1}{1+b_h^I}\left(|\eta_{I,h}|-|\eta_k +\eta_{\mathcal{R}_I}+\eta_{I_u}+\eta_{I_z}|\right)\leq |J(u)-J(\tilde{u})| \leq\frac{1}{1-b_h^I}\left(|\eta_{I,h}|+|\eta_k +\eta_{\mathcal{R}_I}+\eta_{I_u}+\eta_{I_z}|\right).
	\end{equation*}
	Furthermore, if there exists a $\alpha_{\eta_{I,h}} \in (0,1)$ such that
	\begin{equation}
		|\eta_k +\eta_{\mathcal{R}_I}+\eta_{I_u}+\eta_{I_z}| \leq \alpha_{\eta_{I,h}} |\eta_{I,h}|,
	\end{equation}
	then the discretization error estimator is efficient and reliable, i.e.
	\begin{equation}
		\underline{c}_h|\eta_{I,h}|\leq |J(u)-J(\tilde{u})| \leq\overline{c}_h|\eta_{I,h}|,
	\end{equation}
	with $\underline{c}_h:=(1-\alpha_{\eta_{I,h}})\frac{1}{1+b_h^I}$ and $\overline{c}_h:=(1+\alpha_{\eta_{I,h}})\frac{1}{1-b_h^I}$.
	\begin{proof}
		The proof follows from the same arguments as the proof of Theorem~\ref{thm: etah Efficiency and Reliability}.
	\end{proof}
\end{Theorem}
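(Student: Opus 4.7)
The plan is to mirror, step for step, the proof of Theorem~\ref{thm: etah Efficiency and Reliability}, with the interpolation-based estimator $\eta_I$ playing the role previously played by $\eta^{(2)}$ and with $\eta_{I,h}$ playing the role of $\eta_h^{(2)}$. Concretely, I would start from the decomposition \eqref{def: etaI Parts}, which states
\begin{equation*}
\eta_I = \eta_{I,h} + \eta_k + \eta_{I_u} + \eta_{I_z} + \eta_{\mathcal{R}_I}.
\end{equation*}
Applying the triangle inequality in both directions to $|\eta_I| = |\eta_{I,h} + (\eta_k + \eta_{\mathcal{R}_I} + \eta_{I_u} + \eta_{I_z})|$ gives an analogue of Lemma~\ref{lemma: eta2 to etah}:
\begin{equation*}
|\eta_{I,h}| - |\eta_k + \eta_{\mathcal{R}_I} + \eta_{I_u} + \eta_{I_z}| \,\leq\, |\eta_I| \,\leq\, |\eta_{I,h}| + |\eta_k + \eta_{\mathcal{R}_I} + \eta_{I_u} + \eta_{I_z}|.
\end{equation*}

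Next, I would invoke Theorem~\ref{thm: Interpolation Efficiency and Reliability}, whose hypotheses are inherited, to obtain the sandwich
\begin{equation*}
\frac{1}{1+b_h^I}|\eta_I| \,\leq\, |J(u)-J(\tilde{u})| \,\leq\, \frac{1}{1-b_h^I}|\eta_I|.
\end{equation*}
Chaining the two previous displays yields the first claim of the theorem directly. For the second claim, I would substitute the additional hypothesis $|\eta_k + \eta_{\mathcal{R}_I} + \eta_{I_u} + \eta_{I_z}| \leq \alpha_{\eta_{I,h}}|\eta_{I,h}|$ into the triangle inequality to obtain
\begin{equation*}
(1-\alpha_{\eta_{I,h}})|\eta_{I,h}| \,\leq\, |\eta_I| \,\leq\, (1+\alpha_{\eta_{I,h}})|\eta_{I,h}|,
\end{equation*}
and combine with the sandwich on $|J(u)-J(\tilde{u})|$ to read off the asserted constants $\underline{c}_h = (1-\alpha_{\eta_{I,h}})/(1+b_h^I)$ and $\overline{c}_h = (1+\alpha_{\eta_{I,h}})/(1-b_h^I)$.

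I do not anticipate a real obstacle here: the argument is purely algebraic bookkeeping, and the only place where any actual analysis enters is Theorem~\ref{thm: Interpolation Efficiency and Reliability} (which itself rests on the saturation Assumption~\ref{Ass: Interpolation} and Theorem~\ref{thm: discrete Error Representation Interpolation}). The one thing to be careful about is that, in contrast to the enriched-space case, the ``neglected'' block $\eta_k + \eta_{\mathcal{R}_I} + \eta_{I_u} + \eta_{I_z}$ now contains the two extra interpolation residuals $\eta_{I_u}$ and $\eta_{I_z}$, so the assumption $\alpha_{\eta_{I,h}} < 1$ is a stronger restriction than in Theorem~\ref{thm: etah Efficiency and Reliability}; but since it is imposed as a hypothesis, it does not affect the proof itself.
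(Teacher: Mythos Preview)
Your proposal is correct and follows exactly the approach the paper indicates: it spells out, for the interpolation setting, the same argument used in the proof of Theorem~\ref{thm: etah Efficiency and Reliability}, replacing $\eta^{(2)}$ by $\eta_I$, $\eta_h^{(2)}$ by $\eta_{I,h}$, and enlarging the ``remainder block'' to include $\eta_{I_u}$ and $\eta_{I_z}$. The paper itself only writes ``the proof follows from the same arguments,'' so your explicit write-up is precisely what is intended.
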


\subsection{Effectivity Index for Interpolations}
{
In this subsection, we define and investigate effectivity indices for interpolations. For background and motivation of effectivity indices, we refer 
the reader to Section \ref{sec_Ieff_enriched_spaces}.
}
\begin{Theorem}[Bounds on the effectivity index]
	We assume that $|J(u)-J(\tilde{u})|\not=0$.
	Additionally, let the assumptions of Theorem~\ref{thm: Efficiency and Reliability} be fulfilled.
	Then, for the effectivity index $I_{\mathrm{eff},+}^I$ defined by
	\begin{equation} \label{def: Ieff+ Int}
		I_{\mathrm{eff},+}^I:= \frac{\eta_I}{J(u)-J(\tilde{u})},
	\end{equation}
	we have the bounds
	\begin{equation} \label{eq: Ieff+ bounds Int}
		1-b_h^I\leq |I_{\mathrm{eff},+}^I| \leq 1+b_h^I.
	\end{equation}
	If the assumptions of Theorem~\ref{thm: etah Efficiency and Reliability} are fulfilled then for the effectivity index $I^I_{\mathrm{eff}}$ defined by
	\begin{equation} \label{def: Ieff Int}
		I^I_{\mathrm{eff}}:= \frac{\eta_{I,h}}{J(u)-J(\tilde{u})},
	\end{equation}
	we have the bounds
	\begin{equation} \label{eq: Ieff bounds Int}
		\frac{1-b_h^I}{1+\alpha_{\eta_{I,h}}}\leq |I^I_{\mathrm{eff}}| \leq \frac{1+b_h^I}{1-\alpha_{\eta_{I,h}}}.
	\end{equation}
\begin{proof}
	The proof follows the same steps as the proof of Theorem~\ref{thm: Bounds: Ieff}.
\end{proof}
\end{Theorem}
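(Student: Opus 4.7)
The plan is to mirror the proof of Theorem~\ref{thm: Bounds: Ieff} by dividing already established two-sided bounds on $|J(u)-J(\tilde{u})|$ by this same nonzero quantity. The two results providing these bounds in the interpolation setting are Theorem~\ref{thm: Interpolation Efficiency and Reliability} (for the first claim on $I_{\mathrm{eff},+}^I$) and Theorem~\ref{thm: etah Efficiency and Reliability Interpolation} (for the second claim on $I^I_{\mathrm{eff}}$). The nonvanishing hypothesis $|J(u)-J(\tilde{u})| \neq 0$ is exactly what legitimates the division step.

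For the first pair of bounds, I would start from Theorem~\ref{thm: Interpolation Efficiency and Reliability}, which yields
$$\frac{1}{1+b_h^I}\,|\eta_I|\;\leq\;|J(u)-J(\tilde{u})|\;\leq\;\frac{1}{1-b_h^I}\,|\eta_I|.$$
Dividing both one-sided inequalities by $|J(u)-J(\tilde{u})|$ and rearranging produces $1-b_h^I \leq |\eta_I/(J(u)-J(\tilde{u}))| \leq 1+b_h^I$, which is \eqref{eq: Ieff+ bounds Int} after recognizing the ratio as $I_{\mathrm{eff},+}^I$ via \eqref{def: Ieff+ Int}.

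For the second pair, I would invoke Theorem~\ref{thm: etah Efficiency and Reliability Interpolation} under its extra hypothesis $|\eta_k+\eta_{\mathcal{R}_I}+\eta_{I_u}+\eta_{I_z}| \leq \alpha_{\eta_{I,h}}|\eta_{I,h}|$, which delivers
$$\frac{1-\alpha_{\eta_{I,h}}}{1+b_h^I}\,|\eta_{I,h}|\;\leq\;|J(u)-J(\tilde{u})|\;\leq\;\frac{1+\alpha_{\eta_{I,h}}}{1-b_h^I}\,|\eta_{I,h}|.$$
Dividing again by $|J(u)-J(\tilde{u})|$ and taking reciprocals of each one-sided estimate produces exactly \eqref{eq: Ieff bounds Int} via the definition \eqref{def: Ieff Int}.

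Since both required two-sided bounds are already in hand, this theorem is essentially a reformulation and no genuine obstacle arises beyond careful bookkeeping of signs when inverting the one-sided inequalities. The only point requiring a charitable reading is that "assumptions of Theorem~\ref{thm: Efficiency and Reliability}" in the statement should be interpreted as their interpolation counterparts, namely Assumption~\ref{Ass: Interpolation} together with the hypotheses of Theorem~\ref{thm: discrete Error Representation Interpolation}, so that Theorem~\ref{thm: Interpolation Efficiency and Reliability} is directly applicable.
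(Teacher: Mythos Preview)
Your proposal is correct and follows exactly the approach indicated by the paper, namely replicating the proof of Theorem~\ref{thm: Bounds: Ieff} with Theorems~\ref{thm: Interpolation Efficiency and Reliability} and~\ref{thm: etah Efficiency and Reliability Interpolation} playing the roles of Theorems~\ref{thm: Efficiency and Reliability} and~\ref{thm: etah Efficiency and Reliability}. Your remark that the hypotheses should be read as their interpolation counterparts is also apt.
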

%

\subsection{Error Localization with a Partition-of-Unity Approach}
\label{SubSec:Localization}
In this subsection, we address error localization such that globally defined error
estimators $\eta$
can be applied locally to mesh elements
in order to steer adaptive algorithms.
To this end, the previous a posteriori 
error estimators $\eta$ need to be split into element-wise or DoF-wise 
contributions $\eta_i, i=1,\ldots,N$, where $N$ is the number of degrees of freedom. 
Three known approaches are
the classical integration by parts \cite{BeRa96}, 
a variational filtering operator over patches of elements \cite{BraackErn02} 
and a variational partition-of-unity localization \cite{RiWi15_dwr}. 
For stationary problems, the
effectivity of these localizations 
was established and numerically substantiated in \cite{RiWi15_dwr}.
The quality measure of the localization process is the so-called 
indicator index that was as well introduced in \cite{RiWi15_dwr}: 
\begin{Definition}
Let $u\in U$ be the solution of \eqref{eq: General Modelproblem} and $\tilde{u}\in U_h$. For the definition of the discretization error $\eta_h^{(2)}$ in 
\eqref{eq: def primal and adjoint error estimator}, and localizing to all 
degrees of freedom $i=1,\ldots,N$ of the governing triangulation, the indicator 
index is defined as
\begin{equation}
I_{ind} := \frac{\sum_{i=1}^N |\eta_h^{(2)}(i)|}{|J(u)-J(\tilde{u})|}.
\end{equation}
Here, $\eta_h^{(2)}(i)$ is the localization of $\eta_h^{(2)}$ to the degree of freedom 
$i$ for $i=1,\ldots,N$.
\end{Definition}
For good effectivity and indicator indices, an important point is the influence 
of neighboring mesh elements; see e.g., \cite{CaVer99}. Consequently, in the traditional 
method, integration by parts ensured 
gathering
information from neighbor faces and edges 
\cite{BeRa01}.
For coupled problems and multiphysics applications, this procedure is error-prone 
and might be computationally expensive as higher-order operators need to be evaluated. 
Consequently, the objective is to stay in the weak formulation as done with the filtering approach in \cite{BraackErn02}.
A further simplification was then the so-called PU-DWR approach \cite{RiWi15_dwr},
which we shall explain in the following. 
Using the PU, we touch different mesh elements
per PU-node, and consequently, we gather the required information from neighboring elements.
In our adaptive procedures in the remainder of this work, we always employ the 
PU-DWR method, namely for stationary problems with single goals, multiple goals, 
and space-time situations with both single and multiple goals.

\subsubsection{Abstract Realization}
In this section, we derive abstract results and show that the PU-DWR localization enters simply 
as a modified test function into the semi-linear/bilinear forms of the operator equation
and the right hand side. To start, we first introduce the PU space and 
its fundamental property.
Let us assume $\{\chi_1,\ldots,\chi_M\}$ is a basis of the PU space 
$V_{PU}$ such that
\begin{align}
\label{eq_PU_one}
\sum_{i=1}^{M} \chi_{i} \equiv1.
\end{align}          
holds. Common choices for the PU spaces are low-order finite element 
spaces such as $V_{PU} = Q_1$ \cite{RiWi15_dwr}, $V_{PU} = P_1$,
$V_{PU} = \widetilde{X}_{k,h}^{0,1}$, i.e., a $cG(1)dG(0)$ space-time discretization \cite{ThiWi24} or $V_{PU} = X_{kh}^{1,1}$, i.e. a $cG(1)cG(1)$ discretization, 
for example in $d+1$-dimensional space-time discretizations \cite{endtmayer2023goal}.
In general, this ensures a coupling between neighboring temporal elements to address the 
problem shown in \cite{CaVer99}. However, for discontinuous Galerkin discretizations, the dominating edge residuals, i.e. jump terms, are explicitly 
included in the estimator.

\begin{Proposition}[Localized error estimator]
Let the previous PU be given.
The localized form of the error estimator $\eta$ \eqref{eq: theory_Error_Estimator}
is
\[
J(u)-J(\tilde{u}) = \eta = \sum_{i=1}^M
\frac{1}{2}\rho(\tilde{u})((z-\tilde{z})\chi_i)
+\frac{1}{2}\rho^*(\tilde{u},\tilde{z})((u-\tilde{u})\chi_i)
+\rho(\tilde{u})(\tilde{z}) + \mathcal{R}^{(3)}(\chi_i).
\]
As before, the part $\rho(\tilde{u})(\tilde{z})$ 
determines the deviation of the approximate solution $\tilde u$ 
in comparison to the `exact' discrete solution $u_h$. This can be iteration errors 
due iterative linear or non-linear solutions. Since they act globally on the entire 
solution, no localization to $i$ with the PU function is required here. 
\end{Proposition}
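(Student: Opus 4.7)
The plan is to start from the error identity of Theorem~\ref{thm: error identity}, namely
\[
J(u)-J(\tilde{u}) = \frac{1}{2}\rho(\tilde{u})(z-\tilde{z}) + \frac{1}{2}\rho^*(\tilde{u},\tilde{z})(u-\tilde{u}) - \rho(\tilde{u})(\tilde{z}) + \mathcal{R}^{(3)},
\]
and insert the resolution of unity $1 = \sum_{i=1}^M \chi_i$ into exactly those slots that carry a primal or adjoint weight. Concretely, I would write $z-\tilde{z} = \sum_{i=1}^M (z-\tilde{z})\chi_i$ and $u-\tilde{u} = \sum_{i=1}^M (u-\tilde{u})\chi_i$, multiplying pointwise by the PU basis functions.

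The key observation is that $\rho(\tilde{u})(\cdot) = -\mathcal{A}(\tilde{u})(\cdot) \in V^*$ is a \emph{linear} functional in its test-function slot, and likewise $\rho^*(\tilde{u},\tilde{z})(\cdot) = J'(\tilde{u})(\cdot) - \mathcal{A}'(\tilde{u})(\cdot,\tilde{z}) \in U^*$ is linear in its test-function slot. Therefore the sum can be pulled out of each functional:
\[
\rho(\tilde{u})(z-\tilde{z}) = \sum_{i=1}^M \rho(\tilde{u})\bigl((z-\tilde{z})\chi_i\bigr),\qquad \rho^*(\tilde{u},\tilde{z})(u-\tilde{u}) = \sum_{i=1}^M \rho^*(\tilde{u},\tilde{z})\bigl((u-\tilde{u})\chi_i\bigr).
\]
For the remainder $\mathcal{R}^{(3)}$, the same trick applies by distributing $1 = \sum_i \chi_i$ into one chosen argument of each of the trilinear/quadrilinear forms in \eqref{eq: Error Estimator: Remainderterm}, producing a localized quantity that we can denote by $\mathcal{R}^{(3)}(\chi_i)$.

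Finally, the iteration-error term $-\rho(\tilde{u})(\tilde{z})$ plays a structurally different role: its test argument is the approximate adjoint $\tilde{z}$ itself rather than an error $z-\tilde{z}$, and it vanishes whenever $\tilde{u}$ solves the discrete primal problem \eqref{eq: discrete primal}. Since it measures a global solver defect with no ``weight to be distributed,'' I would simply leave it unsplit and attach it as a global scalar contribution, which is exactly the interpretation justified by the paragraph following the statement. Summing the localized contributions then recovers the error identity term by term.

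The step I expect to be least routine is \emph{not} the algebraic distribution, but the well-posedness check that $(z-\tilde{z})\chi_i \in V$ and $(u-\tilde{u})\chi_i \in U$, i.e.\ that pointwise multiplication by $\chi_i \in V_{PU}$ is an admissible operation on the function spaces at hand. For the typical choices $V_{PU} = Q_1$ or $V_{PU} = P_1$ on a shape-regular mesh, the PU basis functions are Lipschitz with bounded support, so multiplication preserves the standard Sobolev/Bochner regularity used throughout the paper; this should be invoked explicitly rather than glossed over. Once this multiplier property is granted, the proof reduces to the two displayed identities above plus linearity.
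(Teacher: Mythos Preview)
Your approach is correct and essentially identical to the paper's: the paper's proof is simply ``Inserting the PU function $\chi_i$ into \eqref{eq: theory_Error_Estimator} and using property \eqref{eq_PU_one}, we immediately establish the result.'' Your write-up is more detailed---explicitly invoking linearity of $\rho$ and $\rho^*$ in their test-function slots and noting the multiplier property $(z-\tilde z)\chi_i\in V$, $(u-\tilde u)\chi_i\in U$---but these are exactly the ingredients the paper's one-line proof tacitly uses.
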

\begin{proof}
Inserting the PU function $\chi_i$ into \eqref{eq: theory_Error_Estimator}
and using property \eqref{eq_PU_one}, we immediately establish the result.
\end{proof}
\begin{Corollary}[Localized error estimator]
\label{coro_localized_eta}
Neglecting the remainder term yields a computable form and
it holds
\[
J(u)-J(\tilde{u}) \approx \eta = \sum_{i=1}^M
\frac{1}{2}\rho(\tilde{u})((z-\tilde{z})\chi_i)
+\frac{1}{2}\rho^*(\tilde{u},\tilde{z})((u-\tilde{u})\chi_i)
+\rho(\tilde{u})(\tilde{z}).
\]
\end{Corollary}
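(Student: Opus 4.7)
The plan is to obtain the Corollary as a direct consequence of the preceding Proposition by simply dropping the remainder contribution $\mathcal{R}^{(3)}(\chi_i)$. No new machinery is required, since the Proposition already establishes the exact, fully localized identity
\[
J(u)-J(\tilde{u}) = \sum_{i=1}^M \left(\frac{1}{2}\rho(\tilde{u})((z-\tilde{z})\chi_i) + \frac{1}{2}\rho^*(\tilde{u},\tilde{z})((u-\tilde{u})\chi_i) + \rho(\tilde{u})(\tilde{z}) + \mathcal{R}^{(3)}(\chi_i)\right),
\]
so the first step of the plan is just to restate this identity and isolate the remainder as the single summand to be discarded.

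Next, I would justify that the truncation is meaningful (i.e., that the approximation symbol $\approx$ is warranted) by recalling the explicit formula for $\mathcal{R}^{(3)}$ given in equation \eqref{eq: Error Estimator: Remainderterm}. That expression is a weighted integral against the third derivatives $J'''$, $\mathcal{A}''$ and $\mathcal{A}'''$, evaluated in cubic combinations of the primal error $e = u-\tilde{u}$ and the adjoint error $e^* = z-\tilde{z}$. Hence, in the asymptotic regime where both $e$ and $e^*$ tend to zero, $\mathcal{R}^{(3)}$ is of higher order compared with the leading quadratic terms $\rho(\tilde{u})(z-\tilde{z})$ and $\rho^*(\tilde{u},\tilde{z})(u-\tilde{u})$, which is the standard reasoning used throughout the paper (and explicitly recalled in the discussion of $\eta_{\mathcal{R}}^{(2)}$ following Lemma~\ref{lemma: eta2 to etah}).

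Finally, I would make precise in what sense the resulting expression is \emph{computable}. The exact identity involves the unknown solutions $u$ and $z$; however, once we replace them either by enriched-space solutions $u_h^{(2)}$, $z_h^{(2)}$ (as discussed in Subsection~\ref{SubSec:EEEnriched}) or by interpolants $I_{h,u}\tilde{u}$, $I_{h,z}\tilde{z}$ (as in Subsection~\ref{SubSec:Interpolation}), each summand can be evaluated on the mesh by simply inserting the modified test function $(\cdot)\chi_i$ into the residuals $\rho$ and $\rho^*$. In this sense, only the remainder term is obstructive to computation, and dropping it produces a sum of cell-wise, directly evaluable contributions.

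The main (and essentially only) subtle point is the passage from $=$ to $\approx$: one must be aware that in non-asymptotic regimes or for functionals where $\mathcal{R}^{(3)}$ is not negligible (as flagged in the paper with reference to \cite{granzow2023linearization}), this truncation can spoil effectivity. This is not an obstacle to the corollary as stated but merely a caveat that the subsequent numerical experiments and effectivity/indicator index studies (Section~\ref{sec_Ieff_enriched_spaces}) are designed to verify a posteriori.
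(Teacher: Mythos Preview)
Your proposal is correct and matches the paper's approach: the paper does not give a separate proof of this Corollary at all, treating it as an immediate consequence of the preceding Proposition obtained by simply dropping the remainder term $\mathcal{R}^{(3)}(\chi_i)$. Your additional remarks on why the truncation is justified (higher-order nature of $\mathcal{R}^{(3)}$) and in what sense the result is computable are accurate elaborations, but go beyond what the paper itself spells out here.
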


\begin{Definition}
The error estimator in Corollary \ref{coro_localized_eta}
is composed by the following parts:
\[
\eta = \eta_h + \eta_k := \sum_{i=1}^M (\eta_p + \eta_a) + \eta_k,
\]
where $\eta_h$ denotes the discretization error and $\eta_k$ the 
non-linear iteration error. Specifically, we have 
\begin{align*}
\eta_p &:= \eta_p(i) := \frac{1}{2}\rho(\tilde{u})((z-\tilde{z})\chi_i), \\
\eta_a&:= \eta_a(i) := \frac{1}{2}\rho^*(\tilde{u},\tilde{z})((u-\tilde{u})\chi_i),\\
\eta_k &:= \rho(\tilde{u})(\tilde{z}).
\end{align*}
\end{Definition}
In the computational realization it is immediately clear from Section \ref{Sec:single-goal} 
that 
$z$ and $\tilde{z}$ as well as $u$ and $\tilde u$ are approximated 
through discrete unknowns from spaces such as $U_h$. 
Here, it is important that $z_h \approx z$ and $\tilde{z}_h \approx \tilde{z}$
come from different discrete spaces since otherwise 
$z_h-\tilde{z}_h \equiv 0$ and $u_h-\tilde{u}_h \equiv 0$.
A practical version of the previous localized form reads:
\begin{Proposition}[Practical error estimator]
\label{prop_DWR_practical}
Let $\tilde{u}\in U_h$ be a low-order approximation to \eqref{eq: discrete primal}, $u_h^{(2)}\in U_h^{(2)}$ the higher-order solution to \eqref{eq: enriched primal}, and
$\tilde{z} \in U_h$ be a low-order approximation to \eqref{eq: discrete adjoint} $z_h^{(2)}\in V_h^{(2)}$ the higher-order adjoint solutions \eqref{eq: enriched Adjointproblem},
respectively.
The practical localized PU error estimator reads
\[
J(u)-J(\tilde{u}) \approx \eta = \sum_{i=1}^M
\frac{1}{2}\rho(\tilde{u})((z_h^{(2)} - \tilde{z})\chi_i)
+\frac{1}{2}\rho^*(\tilde{u},\tilde{z})((u_h^{(2)} - \tilde{u})\chi_i)
+\rho(\tilde{u})(\tilde{z}),
\]
where we now re-define the previous notation and obtain as error parts
\[
\eta = \eta_h + \eta_k := \sum_{i=1}^M (\eta_p + \eta_a) + \eta_k
\]
with
\begin{align*}
\eta_p &:= \eta_p(i) := \frac{1}{2}\rho(\tilde{u})((z_h^{(2)} - \tilde{z})\chi_i),\\
\eta_a &:= \eta_a(i) := \frac{1}{2}\rho^*(\tilde{u},\tilde{z})((u_h^{(2)} - \tilde{u})\chi_i),\\
\eta_k &:= \rho(\tilde{u})(\tilde{z}).
\end{align*}
\end{Proposition}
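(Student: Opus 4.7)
The plan is to read off the result directly from the error identity in Theorem \ref{thm: Error Representation} combined with the partition-of-unity property \eqref{eq_PU_one}. More precisely, I would first invoke Theorem \ref{thm: Error Representation} with $\tilde{u}\in U_h$ and $\tilde{z}\in V_h$ as the approximations supplied in the statement, which yields the (still exact) representation
\[
J(u)-J(\tilde{u}) = \bigl[J(u)-J(u_h^{(2)})\bigr] + \tfrac{1}{2}\rho(\tilde{u})(z_h^{(2)}-\tilde{z}) + \tfrac{1}{2}\rho^*(\tilde{u},\tilde{z})(u_h^{(2)}-\tilde{u}) - \rho(\tilde{u})(\tilde{z}) + \mathcal{R}_h^{(3)}.
\]
The ``$\approx$'' symbol in the proposition is then accounted for by dropping the two non-computable, higher-order contributions $J(u)-J(u_h^{(2)})$ (controlled via Assumption \ref{assump: Saturation}, as in Theorem \ref{thm: Efficiency and Reliability}) and the cubic remainder $\mathcal{R}_h^{(3)}$, exactly as discussed in Section 3.5.1 for the enriched estimator $\eta^{(2)}$.

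Second, I would localize using the PU property. Since $\rho(\tilde{u})(\cdot)$ and $\rho^*(\tilde{u},\tilde{z})(\cdot)$ are linear in their test argument, multiplying $z_h^{(2)}-\tilde{z}$ and $u_h^{(2)}-\tilde{u}$ by $1=\sum_{i=1}^M \chi_i$ and pulling the finite sum outside gives
\[
\rho(\tilde{u})(z_h^{(2)}-\tilde{z}) = \sum_{i=1}^M \rho(\tilde{u})\bigl((z_h^{(2)}-\tilde{z})\chi_i\bigr),
\]
and analogously for $\rho^*(\tilde{u},\tilde{z})(u_h^{(2)}-\tilde{u})$. This produces the two sums defining $\eta_p(i)$ and $\eta_a(i)$ in the statement. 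The global iteration term $\rho(\tilde{u})(\tilde{z})$ is not split by a PU function because it does not carry any local error information tied to the enrichment, and so it contributes as $\eta_k$ once. Reading off signs and prefactors from the identity above then matches the decomposition $\eta = \sum_i (\eta_p+\eta_a) + \eta_k$ exactly as displayed.

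The step I expect to be the main subtlety, rather than an obstacle, is well-definedness of the localized test objects: the products $(z_h^{(2)}-\tilde{z})\chi_i$ and $(u_h^{(2)}-\tilde{u})\chi_i$ are in general not elements of the enriched discrete spaces $V_h^{(2)}$ or $U_h^{(2)}$. Here one argues that $\rho$ and $\rho^*$ are in fact defined on the continuous spaces $V\supseteq V_h^{(2)}$ and $U\supseteq U_h^{(2)}$, and that the pointwise product of a piecewise-smooth PU function $\chi_i\in V_{PU}$ with a finite element function retains enough regularity to lie in $V$ (respectively $U$), so the expressions are admissible. This is the standard justification for the PU-DWR localization in \cite{RiWi15_dwr} and is the reason the choice of $V_{PU}$ (e.g., $Q_1$, $P_1$, or appropriate space-time analogues) matters; once this admissibility is granted, the proposition follows by combining Theorem \ref{thm: Error Representation} with \eqref{eq_PU_one} and linearity, with no further estimates needed.
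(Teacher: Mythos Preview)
Your proposal is correct and follows essentially the same route as the paper: the paper does not give a separate proof for this proposition, but it is set up as the immediate practical analogue of the preceding localized estimator (Proposition and Corollary~\ref{coro_localized_eta}), obtained precisely by invoking the enriched error representation (Theorem~\ref{thm: Error Representation}), dropping the saturation term $J(u)-J(u_h^{(2)})$ and the cubic remainder, and then splitting the two residual terms via $\sum_i \chi_i = 1$. Your remark on admissibility of the products $(z_h^{(2)}-\tilde z)\chi_i$ in the continuous test space is the only point requiring care, and it is exactly the standard PU-DWR justification the paper relies on.
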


\subsubsection{Details on Using $\tilde{u}_h^{(2)}$ for the Localization}
In this part, we use $\tilde{u}_h^{(2)}=u_h^{(2)}$ in the case of enriched spaces and $\tilde{u}_h^{(2)}=I_{h,u}\tilde{u}$ in the case of interpolation.
For unified notation we use $\tilde{u}_h^{(2)}$ for the interpolation and enriched solutions.
Furthermore, we use $\tilde{z}_h^{(2)}=z_h^{(2)}$ in the case of enriched spaces and $\tilde{z}_h^{(2)}=I_{h,z}\tilde{z}$ in the case of interpolation. 
As before, let $\chi_i \in S_1$, where $S_1 \in \{P_1,Q_1\}$. Then, we know that $\sum_{i=1}^{M}\chi_i=1$, where $M:=\operatorname{dim}(S_1)$.
It holds
	\begin{align}
	&\frac{1}{2}\left(\eta_p(\tilde{u})(\tilde{z}_h^{(2)}-\tilde{z})+\eta_a(\tilde{u},\tilde{z})(\tilde{u}_h^{(2)}-\tilde{u})\right), \nonumber\\
	=&\frac{1}{2}\left(\eta_p(\tilde{u})((\tilde{z}_h^{(2)}-\tilde{z})\sum_{i=1}^{M}\chi_i)+\eta_a(\tilde{u},\tilde{z})((\tilde{u}_h^{(2)}-\tilde{u})\sum_{i=1}^{M}\chi_i)\right) \nonumber\\
	=&\sum_{i=1}^{M}\underbrace{\frac{1}{2}\left(\eta_p(\tilde{u})((\tilde{z}_h^{(2)}-\tilde{z})\chi_i)+\eta_a(\tilde{u},\tilde{z})((\tilde{u}_h^{(2)}-\tilde{u})\chi_i)\right)}_{:=\eta_i^{PU}} \label{eq: eta_iPU}\\
	=&\sum_{i=1}^{M}\eta_i^{PU}. \nonumber
	\end{align}	 
These indicators $\eta_i^{PU}$ represent an error distribution of the PU 
or the nodal error contribution.
 This nodal error estimator is equally distributed to all elements sharing that node. If $S_1=Q_1$, then  adaptive refinement will introduce hanging nodes. The nodal error estimator on these hanging nodes is distributed to the corresponding neighboring nodes as visualized in Figure~\ref{fig: NiPU hanging nodes}.
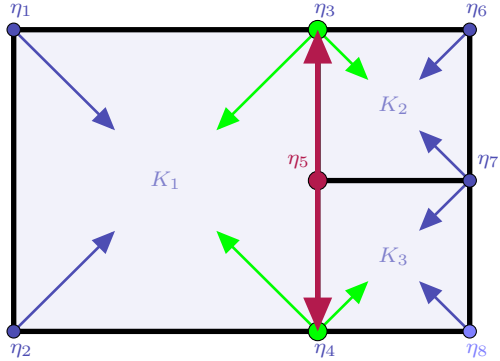
\begin{figure}[H]
	\centering
	\definecolor{farbe2}{rgb}{0.7,0.1,0.3}
	\definecolor{farbe1}{rgb}{0.5,0.5,1}
	\definecolor{farbe4}{rgb}{0.5,0.5,0.8}
	\definecolor{farbe5}{rgb}{0,1,0}
	\definecolor{farbe6}{rgb}{0.3,0.3,0.7}
	\scalebox{1.0}{
		\begin{tikzpicture}[line cap=round,line join=round,>=triangle 45,x=1cm,y=1cm]
			\clip(-5,-1) rectangle (3,5);
			\fill[line width=2pt,color=farbe4,fill=farbe4,fill opacity=0.10000000149011612] (-4,4) -- (-4,0) -- (0,0) -- (0,4) -- cycle;
			\fill[line width=2pt,color=farbe4,fill=farbe4,fill opacity=0.10000000149011612] (0,0) -- (2,0) -- (2,2) -- (0,2) -- cycle;
			\fill[line width=2pt,color=farbe4,fill=farbe4,fill opacity=0.10000000149011612] (2,2) -- (2,4) -- (0,4) -- (0,2) -- cycle;
			\draw [line width=2pt,color=black] (-4,4)-- (-4,0);
			\draw [line width=2pt,color=black] (-4,0)-- (0,0);
			\draw [line width=2pt,color=black] (0,0)-- (0,4);
			\draw [line width=2pt,color=black] (0,4)-- (-4,4);
			\draw [line width=2pt,color=black] (0,0)-- (2,0);
			\draw [line width=2pt,color=black] (2,0)-- (2,2);
			\draw [line width=2pt,color=black] (2,2)-- (0,2);
			\draw [line width=2pt,color=black] (2,2)-- (2,4);
			\draw [line width=2pt,color=black] (2,4)-- (0,4);
			\draw [line width=2pt,color=black] (0,4)-- (0,2);
			
			\draw [color=farbe5][->,line width=1pt] (0,4) -- (0.6667,3.3333);
			\draw [color=farbe6] [->,line width=1pt] (2,4) -- (1.3333,3.3333);
			\draw [color=farbe6] [->,line width=1pt] (2,2) -- (1.3333,2.6667);
			\draw [color=farbe5][->,line width=1pt] (0.0,-0.0) -- (0.6667,0.6667);
			\draw [color=farbe6] [->,line width=1pt] (2,0) -- (1.3333,0.6667);
			\draw [color=farbe6]  [->,line width=1pt] (2,2) -- (1.3333,1.3333);
			\draw [color=farbe6]  [->,line width=1pt] (-4,4) -- (-2.6667,2.6667);
			\draw [color=farbe5][->,line width=1pt] (0,4) -- (-1.3333,2.6667);
			\draw [color=farbe6] [->,line width=1pt] (-4,0) -- (-2.6667,1.3333);
			\draw [color=farbe5][->,line width=1pt] (0.0,-0.0) -- (-1.3333,1.3333);
			\begin{scriptsize}
				\draw [fill=farbe6] (-4,4) circle (2.5pt);
				\draw[color=farbe6] (-3.9,4.25) node {$\eta_1$};
				\draw [fill=farbe6] (-4,0) circle (2.5pt);
				\draw[color=farbe6] (-3.9,-0.25) node {$\eta_2$};
				\draw [fill=farbe5] (0,4) circle (3.5pt);
				\draw[color=farbe6] (0.1,4.25) node {$\eta_3$};
				\draw [fill=farbe1] (2,0) circle (2.5pt);
				\draw[color=farbe1] (2.1,-0.25) node {$\eta_8$};
				\draw [fill=farbe6] (2,2) circle (2.5pt);
				\draw[color=farbe6] (2.25,2.25) node {$\eta_7$};
				\draw [fill=farbe2] (0,2) circle (3.5pt);
				\draw[color=farbe2] (-0.25,2.25) node {$\eta_5$};
				\draw [fill=farbe6] (2,4) circle (2.5pt);
				\draw[color=farbe6] (2.1,4.25) node {$\eta_6$};
				\draw [fill=farbe5] (0,0) circle (3.5pt);
				\draw[color=farbe6] (0.1,-0.25) node {$\eta_4$};
				\draw[color=farbe4] (-2,2) node {$K_1$};
				\draw[color=farbe4] (1,3) node {$K_2$};
				\draw[color=farbe4] (1,1) node {$K_3$};
				\draw [line width=2pt,color=farbe2] [->,line width=2pt] (0,2) -- (0,4);
				\draw [line width=2pt,color=farbe2] [->,line width=2pt] (0,2) -- (0,0);			
			\end{scriptsize}
		\end{tikzpicture}							
	}
\caption{Nodal error estimator on the hanging $\eta_5$ is distributed equally to $\eta_3$ and $\eta_5$ the nodal contribution is distributed to the elements.\label{fig: NiPU hanging nodes}}
\end{figure}

\begin{Remark}
Of course the partition of unity technique is not restricted to the finite element method and can be applied to other discretization techniques like isogeometric analysis\cite{IGA} as well. 
\end{Remark}

\subsection{Adaptive Algorithms}
\label{sec_adaptive_algorithms_single_goal}
In this section, we briefly describe the fundamental 
algorithms for enriched approximation and interpolation.

\subsubsection{Adaptive Algorithm Using Enriched Approximations}
Using the result of the previous subsections of Section~\ref{Sec:single-goal}, we can construct the following algorithm if we use enriched spaces.
\begin{algorithm}[H]
	\caption{The adaptive algorithm for enriched approximations} \label{Alg: adapt. Alg}
	\begin{algorithmic}[1]
		\Procedure{Goal Adaptive}{$J$, $\mathcal{A}$, $\mathcal{T}_0$, TOL, maxNDoFs}      
		\State $k\leftarrow 0$,$\mathcal{T}_k\leftarrow \mathcal{T}_0$, $\eta_h\leftarrow \infty$
		\While{$\eta_h> 10^{-2}$ TOL \& $|\mathcal{T}_k|$ $\geq$ maxNDoFs }
		\State Solve \eqref{eq: discrete primal} to obtain $\tilde{u}$ with some non-linear solver like Newton's method \label{Alg: Single: Step uh}
		\State Solve \eqref{eq: enriched primal}to obtain $u_h^{(2)}$  with some non-linear solver like Newton's method \label{Alg: Single: Step uh2}
		\State Solve \eqref{eq: enriched Adjointproblem} and \eqref{eq: discrete adjoint} to obtain $z_h^{(2)}$ and $\tilde{z}$ using some linear solver.
		\State Compute $\eta_h$ and  the node-wise error contribution $\eta_i^{PU}$ as in \eqref{eq: eta_iPU}.
		\State Distribute $\eta_i^{PU}$ equally to all elements that share the node $i$.
		\State Mark the elements with some marking strategy, e.g.\ D\"orfler marking \cite{Doerfler:1996a}.
		\State Refine the mesh according to the marked cells
		\State $k\leftarrow k+1$
		\EndWhile
		\Return $J(u_k)$
		\EndProcedure
	\end{algorithmic}	
\end{algorithm}
\begin{Remark}
	In Algorithm~\ref{Alg: adapt. Alg},the Line~\ref{Alg: Single: Step uh2} and \ref{Alg: Single: Step uh} can be swapped.
\end{Remark}
\subsubsection{Adaptive Algorithm Using Interpolations}
From the previous subsections of Section~\ref{Sec:single-goal}, we derive the following algorithm featuring interpolations.
\begin{algorithm}[H]
	\caption{The adaptive algorithm using interpolations} \label{Alg: adapt. AlgInterpolation}
	\begin{algorithmic}[1]
		\Procedure{Goal Adaptive}{$J$, $\mathcal{A}$, $\mathcal{T}_0$, TOL, maxNDoFs}      
		\State $k\leftarrow 0$,$\mathcal{T}_k\leftarrow \mathcal{T}_0$, $\eta_h\leftarrow \infty$
		\While{$\eta_h> 10^{-2}$ TOL \& $|\mathcal{T}_k|$ $\geq$ maxNDoFs }
		\State Solve \eqref{eq: discrete primal} to obtain $\tilde{u}$ with some non-linear solver like Newton's method \label{Alg: Single: Step uh I}
		\State Construct the Interpolation $\tilde{u}_h^{(2)}=I_{h,u}\tilde{u}$ \label{Alg: Single: Step Iuh2}
		\State Solve \eqref{eq: discrete adjoint} to obtain $\tilde{z}$ using some linear solver. \label{Alg: Single: Step zh I}
		\State Construct the Interpolation $\tilde{z}_h^{(2)}=I_{h,z}\tilde{z}$ \label{Alg: Single: Step Izh2}
		\State Compute $\eta_h$ and  the node-wise error contribution $\eta_i^{PU}$ as in \eqref{eq: eta_iPU}.
		\State Distribute $\eta_i^{PU}$ equally to all elements that share the node $i$.
		\State Mark the elements with some marking strategy, e.g.\ D\"orfler marking \cite{Doerfler:1996a}.
		\State Refine the mesh according to the marked cells
		\State $k\leftarrow k+1$
		\EndWhile
		\Return $J(u_k)$
		\EndProcedure
\end{algorithmic}
\end{algorithm}
\begin{Remark}	
In Algorithm~\ref{Alg: adapt. AlgInterpolation}, we cannot swap the solution steps with their corresponding interpolation steps, e.g.\ Line~\ref{Alg: Single: Step Iuh2} cannot be done before Line~\ref{Alg: Single: Step uh I}.
\end{Remark}

%
%
\section{Error Estimation for non-standard Discretizations}
\label{Sec:Nonstandard}

\subsection{Motivation and Examples of Non-Consistencies}
The basic error identity sketched in
Section~\ref{SubSec:Error-Identity} poses very little assumptions on
the underlying problem and in particular on $u,\tilde u\in U$ and the
adjoints $z,\tilde z\in V$. It can directly be applied to any
$U$-conforming discretization, i.e. $u_h\in U_h\subset U$ and $z_h\in
V_h\subset V$ yielding the identity
\begin{equation}\label{ns:id}
  J(u)-J(u_h) = \frac{1}{2}\big(\rho(u_h)(z-z_h) +
  \rho^*(u_h,z_h)(u-u_h)\big) - \rho(u_h)(z_h) + \mathcal{R}^{(3)}.
\end{equation}
If we further consider consistent discretizations, i.e. discrete
solutions satisfying
\begin{equation}\label{ns:cons}
  \begin{aligned}
    \mathcal{A}(u_h)(\phi_h) &= 0\quad \forall \phi_h\in V_h,\\
    \mathcal{A}'(u_h)(\psi_h,z_h) &= J'(u_h)(\psi_h)\quad \forall \psi_h\in U_h,
  \end{aligned}
\end{equation}
and assume for now that the discrete solution $u_h\in U_h$ is indeed a
solution and no iteration error remains,~\eqref{ns:id} simplifies to
\begin{equation}\label{ns:id1}
  J(u)-J(u_h) = \frac{1}{2}\big(\rho(u_h)(z-z_h) +
  \rho^*(u_h,z_h)(u-u_h)\big) + \mathcal{R}^{(3)}.
\end{equation}
This gives rise to the classical formulation of the adjoint error
identity~\cite{BeRa01} that, using Galerkin orthogonality, allows to
localize the error by replacing the approximation errors $u-u_h$ and
$z-z_h$ by any interpolation $I_h u\in U_h$ and $I_h z\in V_h$
\begin{equation}\label{ns:id2}
  J(u)-J(u_h) = \frac{1}{2}\big(\rho(u_h)(z-I_h z) +
  \rho^*(u_h,z_h)(u-I_h u)\big) + \mathcal{R}^{(3)}.
\end{equation}
Besides conformity and consistency of the discretization, the
fundamental assumption is the variational principle defining the
solution $u\in U$ and its adjoint $z\in V$.

In the following paragraphs, we examine various problems in which one
or more of these assumptions are violated. One straightforward example is
the realization of Dirichlet conditions. Assume that the proper
solution space would be
\[
u\in U = u^D+H^1_0(\Omega),
\]
where $u^D\in H^1(\Omega)$ is the extension of some boundary data
$g\in H^\frac{1}{2}(\partial\Omega)$. The discrete finite element
realization would find
\[
u_h\in U_h = I_h u^D + U_h^0,
\]
where $U_h^0\subset H^1_0(\Omega)$ has homogeneous Dirichlet data, but
where $I_h u^D\neq u^D$ such that $U_h\not\subset U$. An
even simpler example is the weak  enforcement of boundary conditions
by Nitsche's method~\cite{Ni71}, where, in general, $u_h\neq 0$ on the
discrete boundary and hence $u_h\not\in H^1_0(\Omega)$. Further
examples leading to non-conforming discretizations are approximations
on curved domains, where $\Omega\neq \Omega_h$~\cite{MinakowskiRichter2019}, non-conforming finite
elements such as the Crouzeix-Raviart element \cite{CrouRa73} or discontinuous
Galerkin methods \cite{Ri08,DiErn2012} for elliptic problems.

Another potential source of problems lies in the non-consistency of
the discrete formulation. While consistency is required to use
Galerkin orthogonality for localization in the spirit
of~\eqref{ns:id2}, it is not essential for the path outlined in
Section~\ref{Sec:single-goal} as the error identity contains the term
$\rho(\tilde u)(\tilde z)$, which, on the one hand, stands for
iteration errors, but which also includes all non-conformity
errors. Classical sources of non-consistency are stabilized finite
element methods, where the discrete variational formulation must be
enriched. Examples are transport stabilizations such as streamline diffusion for
a simple diffusion transport problem
\begin{equation}\label{ns:stab}
  \mathcal{A}_h(\cdot)(\cdot) = \mathcal{A}(\cdot)(\cdot)+\mathcal{S}(\cdot)(\cdot),
\end{equation}
where
\begin{equation}\label{ns:stab:sd}
  \mathcal{A}_h(u)(\phi) = \underbrace{(\nabla u,\nabla \phi) + (\beta\cdot\nabla
    u,\phi)}_{={\mathcal A}(u)(\phi)} +
  \underbrace{(\delta_h\beta\cdot\nabla u,\beta\cdot\nabla
    \phi)}_{={\mathcal S}(u)(\phi)}
\end{equation}
and where the solution $u\in U$ to $\mathcal{A}(u)(\phi)=0$ for all
$\phi\in V$ does not satisfy the discrete problem
$\mathcal{A}_h(u)(\phi)=0$. Other examples are stabilizations of
saddle-point problems such as local
projections~\cite{BeckerBraack2001}. Non-consistency is also relevant
for time-dependent problems. While the DWR
estimator can directly be applied to space-time Galerkin methods as
discussed in Section~\ref{Sec:single-goal}, Galerkin time
discretizations might not be favorable in terms of computational
complexity. Instead, efficient time-stepping methods are used for
simulation and their similarity to certain Galerkin space-time
discretization is utilized for error estimation 
only~\cite{MeiRi14,MeidnerRichter2015}.

The idea of different methods in simulation and theoretical error
analysis can be taken even further. For example, the error estimator
can in principle be applied to discrete solutions that are not based
on variational principles at all, but are obtained using finite
volume methods~\cite{CHEN201469} or neural networks~\cite{minakowski2021error}. 


\subsection{Estimating the Consistency Error}
\label{SubSec:consistency}

As before, let $\mathcal{A}(\cdot)(\cdot)$ be the variational semi-linear form
describing the problem at hand. Then, we have: Find 
{%
\begin{equation}
u\in U: \quad \mathcal{A}(u)(v) = 0\quad \forall v\in V.
\end{equation}
}%
Then, $u_h\in U_h\subset U$ shall be the discrete approximation that
is given in the modified variational form 
\begin{equation}
u_h\in U_h\subset U\quad \mathcal{A}_h(u_h)(v_h) = 0\quad \forall v_h\in V_h,
\end{equation}
where we assume that consistency does not hold, i.e. in the general case we have
\begin{equation}
\mathcal{A}_h(u)(v_h) \neq 0\quad\forall v_h\in V_h.
\end{equation}
For the following we assume that the discrete variational form can be written as
$\mathcal{A}_h(\cdot)(\cdot)=\mathcal{A}(\cdot)(\cdot)+\mathcal{S}_h(\cdot)(\cdot)$, and that the discrete adjoint problem is given by:
\begin{equation}
  \text{Find  $z_h\in V_h$:} \quad \mathcal{A}_h'(u_h)(v_h,z_h) = J'(u_h)(v_h)\quad\forall v_h\in U_h,
\end{equation}
with $\mathcal{A}_h'(\cdot)(\cdot,\cdot) = \mathcal{A}'(\cdot)(\cdot,\cdot) +
\mathcal{S}^*_h(\cdot)(\cdot,\cdot)$, where $\mathcal{S}^*_h$ is not necessarily the derivative of $\mathcal{S}_h(\cdot,\cdot)$. 

\begin{Theorem}\label{thm: Error Representation Consistency}
  Let $u\in U$ and $z\in V$ be primal and adjoint solutions such that
  \[
  \mathcal{A}(u)(v) = 0\quad\forall v\in V,\quad
  \mathcal{A}'(u)(v,z) = J'(u)(v)\quad\forall v\in U.
  \]
  Further, let  $u_h\in U_h\subset U$ and $z_h\in  V_h\subset V$ be primal and adjoint discrete approximations such that
  \[
  \mathcal{A}_h(u_h)(v_h) \approx 0\quad\forall v_h\in V_h,\quad
  \mathcal{A}_h'(u_h)(v_h,z_h) \approx J'(u_h)(v_h)\quad\forall v_h\in U_h,
  \]
  where
  \begin{equation}\label{ns:stabform}
    \mathcal{A}_h(u)(v) = \mathcal{A}(u)(v) + \mathcal{S}_h(u)(v),\quad
    \mathcal{A}'_h(u)(v,z) = \mathcal{A}'(u)(v,z) + \mathcal{S}_h^*(u)(v,z).
  \end{equation}
  Then, the following error representation holds
  \begin{equation} \label{eq:ConsistencyError}
    J(u)-J(u_h)= \frac{1}{2}\left(\rho(u_h)(z-z_h)+\rho^*(u_h,z_h)(u-u_h)\right) - \rho_h(u_h)(z_h)-\mathcal{S}_h(u_h)(z_h) + \mathcal{R}^{(3)}.
  \end{equation}
  Here, $\rho(\cdot)(\cdot),\rho^*(\cdot,\cdot)(\cdot)$ and $\mathcal{R}^{(3)}$ are defined as in Theorem~\ref{thm: error identity}, whereas
  \[
  \rho_h(u)(v) = -A_h(u)(v),\quad \rho_h^*(u,z)(v) = J'(u)(v)-A'(u)(v,z).
  \]
  Given interpolations $I_h:U\to U_h$ and $I_h^*:V\to V_h$, the localization of this error identity reads
  \begin{multline} \label{eq:ConsistencyErrorLocal}
    J(u)-J(u_h)= \frac{1}{2}\left(\rho(u_h)(z-I_h^*z)+\rho^*(u_h,z_h)(u-I_hu) \right)\\
    -\rho_h(u_h)(z_h) 
    +\frac{1}{2}\left(\rho_h(u_h)(I_h^*z - z_h)+\rho_h^*(u_h,z_h)(I_h^*u - u_h)\right)\\
    -\frac{1}{2}\left(\mathcal{S}_h(u_h)(I_h^*z - z_h)+\mathcal{S}_h^*(u_h)(I_h^*u - u_h,z_h)\right) - \mathcal{S}_h(u_h)(z_h)
    + \mathcal{R}^{(3)}.
  \end{multline}
  \begin{proof}
    The error identity~\eqref{eq:Error Representation} of Theorem~\ref{thm: error identity} does not require consistency and is directly applicable
    \[
    J(u)-J(u_h)= \frac{1}{2}\left(\rho(u_h)(z-z_h)+\rho^*(u_h,z_h)(u-u_h)\right) - \rho(u_h)(z_h) + \mathcal{R}^{(3)}.
    \]
  Using~\eqref{ns:stabform},~\eqref{eq:ConsistencyError} directly follows. 

  If we want to exploit Galerkin orthogonality to localize by means of replacing the approximation errors $u-u_h$ and $z-z_h$ by interpolation weights, we introduce $\pm \rho(u_h)(z_h-I_h^*z)$ and $\pm \rho^*(u_h)(u_h-I_h u,z_h)$, and we obtain
  \begin{multline}\label{ns:7}
    J(u)-J(u_h)= \frac{1}{2}\left(\rho(u_h)(z-I_h^*z)+\rho^*(u_h,z_h)(u-I_hu) \right)
    -\rho_h(u_h)(z_h)  - \mathcal{S}_h(u_h)(z_h)\\
    +\frac{1}{2}\left(\rho(u_h)(I_h^*z - z_h)+\rho^*(u_h,z_h)(I_h^*u - u_h)\right)
    + \mathcal{R}^{(3)}.
  \end{multline}
  We further use~\eqref{ns:stabform} to split $\rho(\cdot)(\cdot) = \rho_h(\cdot)(\cdot)-\mathcal{S}_h(\cdot)(\cdot)$, and likewise $\rho^*(\cdot,\cdot)(\cdot)$ to separate the error identity into weighted residuals, iteration errors and consistency errors to reach~\eqref{eq:ConsistencyErrorLocal}.
  \end{proof}
\end{Theorem}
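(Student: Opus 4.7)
The plan is to reduce the statement to the conforming error identity of Theorem~\ref{thm: error identity} and then reorganize the terms so that three distinct error sources appear: the weighted residual (with interpolation weights), the iteration error in the stabilized discrete formulation, and the consistency (stabilization) correction. The crucial observation enabling this is that Theorem~\ref{thm: error identity} makes no consistency assumption whatsoever on $u_h$ and $z_h$: it only uses that $u\in U$ and $z\in V$ solve their respective continuous variational problems. Therefore the identity
\[
J(u)-J(u_h) = \tfrac12\bigl(\rho(u_h)(z-z_h) + \rho^*(u_h,z_h)(u-u_h)\bigr) - \rho(u_h)(z_h) + \mathcal{R}^{(3)}
\]
applies verbatim with the \emph{original} (non-stabilized) residuals $\rho$ and $\rho^*$.

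The next step is to transfer the single global term $\rho(u_h)(z_h)$ from the continuous residual to the discrete one. Using $\mathcal{A}_h = \mathcal{A} + \mathcal{S}_h$, the identity $\rho = \rho_h + \mathcal{S}_h$ holds (tracking signs through $\rho(u)(v)=-\mathcal{A}(u)(v)$ and $\rho_h(u)(v)=-\mathcal{A}_h(u)(v)$), so that $-\rho(u_h)(z_h) = -\rho_h(u_h)(z_h) - \mathcal{S}_h(u_h)(z_h)$. Substituting this into the identity above yields the first claim \eqref{eq:ConsistencyError}; here $\rho_h(u_h)(z_h)$ plays the role of the iteration residual of the stabilized scheme (vanishing if $u_h$ is an exact discrete solution), while $\mathcal{S}_h(u_h)(z_h)$ is an explicit, unavoidable consistency penalty.

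For the localized identity \eqref{eq:ConsistencyErrorLocal}, the strategy is the standard one of introducing interpolants $I_h u\in U_h$ and $I_h^* z\in V_h$ by adding and subtracting: split $z-z_h = (z-I_h^*z) + (I_h^*z - z_h)$ and $u-u_h = (u-I_h u) + (I_h u - u_h)$ inside the bilinear weighted-residual part. The first summands yield the genuine interpolation-weighted residuals $\rho(u_h)(z-I_h^*z)$ and $\rho^*(u_h,z_h)(u-I_h u)$. For the second summands the arguments $I_h^*z - z_h$ and $I_h u - u_h$ live in the discrete spaces, so I would invoke the same splittings $\rho = \rho_h + \mathcal{S}_h$ and $\rho^* = \rho_h^* + \mathcal{S}_h^*$ (the latter introduced independently, since $\mathcal{S}_h^*$ is not assumed to be the derivative of $\mathcal{S}_h$). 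This produces the iteration-type half-weighted residuals $\rho_h(u_h)(I_h^*z-z_h)$ and $\rho_h^*(u_h,z_h)(I_h u - u_h)$ on the one hand, and a secondary consistency contribution involving $\mathcal{S}_h$ and $\mathcal{S}_h^*$ on the other. Collecting all terms of each type then gives the stated identity, with $\mathcal{R}^{(3)}$ inherited verbatim from Theorem~\ref{thm: error identity}.

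The main obstacle I anticipate is purely bookkeeping: the signs of $\rho \leftrightarrow \rho_h \pm \mathcal{S}_h$ must be tracked carefully and consistently between the primal and adjoint sides, and one must resist the temptation to assume $\mathcal{S}_h^* = (\mathcal{S}_h)'$. A second, conceptual point worth flagging is that no Galerkin orthogonality is ever used; the role ordinarily played by orthogonality (making the weights $I_h^*z - z_h$ and $I_h u - u_h$ harmless) is here made quantitative via the residual $\rho_h(u_h)$ and its adjoint, which are small precisely to the extent that the stabilized nonlinear system has been solved accurately. Once those two points are handled, no further ingredient beyond algebraic rearrangement is required.
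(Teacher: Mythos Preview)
Your proposal is correct and follows essentially the same route as the paper: apply Theorem~\ref{thm: error identity} directly (it needs no consistency of $u_h,z_h$), rewrite $-\rho(u_h)(z_h)=-\rho_h(u_h)(z_h)-\mathcal{S}_h(u_h)(z_h)$ via $\rho=\rho_h+\mathcal{S}_h$ to obtain \eqref{eq:ConsistencyError}, then split $z-z_h$ and $u-u_h$ through the interpolants and apply the same $\rho=\rho_h+\mathcal{S}_h$, $\rho^*=\rho_h^*+\mathcal{S}_h^*$ decomposition on the discrete-argument halves. Your emphasis on sign bookkeeping and on not assuming $\mathcal{S}_h^*=(\mathcal{S}_h)'$ is well placed.
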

The error identities~\eqref{eq:ConsistencyError} and~\eqref{eq:ConsistencyErrorLocal} now consist of five parts: primal and dual weighted residuals, the discrete iteration error, the non-consistency error and, finally, the remainder. Apart from the remainder, all other terms can be evaluated numerically, where for the residuals one of the approximations discussed in Section~\ref{SubSec:EEEnriched} or Section~\ref{SubSec:Interpolation} has to be used. The iteration errors vanish if the algebraic problems are solved to sufficient precision.

\begin{Remark}[Consistency remainders]
  The two additional non-consistency terms appearing in~\eqref{ns:7} can not be directly evaluated, as they depend on the unknown exact solutions $u\in U$ and $z\in V$. However, they are usually negligible, as they carry an additional order compared to the consistency term $S_h(u_h)(z_h)$. As an example we consider the streamline diffusion stabilization~\eqref{ns:stab:sd}, where
  \begin{equation}
  \mathcal{S}_h(u_h,v_h) = (\delta_h \beta\cdot\nabla u_h,\beta\cdot\nabla v_h)
  \end{equation}
  and $\mathcal{S}^*_h(u_h)(v_h,z_h) = \mathcal{S}_h(v_h,z_h)$. While $\mathcal{S}_h(u_h)(z_h)$ can simply be estimated as
  \begin{equation} \label{ns:x}
  \big|\mathcal{S}_h(u_h)(z_h)\big| \le c |\delta_h|\,\|\nabla u_h\|\cdot \|\nabla z_h\|,
  \end{equation}
  primal and adjoint    consistency remainders give rise to
  \begin{equation} 
  \big|\mathcal{S}_h(u_h)(I_h^*-z_h)\big| \le c |\delta_h|\,\|\nabla u_h\|\cdot
  \big(\|\nabla (z-z_h)\|+\|\nabla (z-I_h^*z)\|\big),
  \end{equation}
  which is of higher order as compared to the main consistency term~\eqref{ns:x}.
\end{Remark}

\subsubsection{Consistency Error in Galerkin Time-Stepping Methods}

Besides stabilization techniques, another typical application of tracking consistency 
errors in error estimation is found in time-stepping methods. To illustrate this, let $u'(t) = f(t,u(t))$ be a given scalar initial value problem on $I=(0,T)$ with $u(0)=u_0$. By $0=t_0<t_1<\dots<t_N=T$ we introduce discrete points in time, where we uniformly assume $\Delta t=t_n-t_{n-1}$ for simplicity of notation only. Simple time-stepping schemes like the backward Euler method
\begin{equation}\label{ns:ie}
u^n + \Delta t f(t_n,u^n) = u^{n-1}
\end{equation}
find their counterpart in Galerkin time-stepping methods such as introduced by Eriksson,  Estep, Hansbo, and Johnson~\cite{ErikssonEstepHansboJohnson1995} as well as Thom{\'e}e~\cite{Thomee1997}. Time-stepping methods and Galerkin methods must still be seen as two distinct methods. Introducing the discontinuous space
\begin{equation}
  U_h = \{ v\in L^2(I) | \; v(t_0)\in\mathbb{R},\; v\big|_{(t_{n-1},t_n]}\in\mathbb{R},\; n=1,\dots,N\},
\end{equation}
the backward Euler method~\eqref{ns:ie} can be formulated as to find $u_h\in U_h$ such that
\begin{equation}
A_h(u_h)(v_h) = 0\quad\forall v_h\in V_h=U_h
\end{equation}
with the fully discrete variational formulation
\begin{equation}\label{var:be}
  A_h(u_h)(v_h) =\left(u_h(t_0)-u_0\right)\cdot v_h(t_0) +  \sum_{n=1}^N \left(u_h(t_n)-u_h(t_{n-1}) + \Delta t f\left(t_n,u(t_n)\right)\right)\cdot v_h(t_n). 
\end{equation}
In contrast, the corresponding discontinuous Galerkin approach also defines $u_h\in U_h$ given by
\begin{equation}
A(u_h)(v_h) = 0\quad\forall v_h\in U_h,
\end{equation}
where the continuous variational form is 
\begin{multline}\label{var:dg}
  A(u_h)(v_h) =\left(u_h(t_0)-u_0\right)\cdot v_h(t_0)\\
  +  \sum_{n=1}^N\left\{ \left(u_h(t_n)-u_h(t_{n-1}\right)\cdot v_h(t_n)
  +\int_{t_{n-1}}^{t_n} f(t,u_h(t))\cdot v_h(t)\right\}.
\end{multline}
Both forms,~\eqref{var:be} and~\eqref{var:dg}, only differ by numerical quadrature. However, as~\eqref{var:be} is based on the box rule which has the same order of convergence as the backward Euler method itself, both discrete approaches must be considered substantially different.

The consistency error can be introduced as $\mathcal{S}_h(u)(z) = A_h(u)(z)-A(u)(z)$ and for the Euler method we obtain
\begin{equation}\label{var:dg_consistency}
  \begin{aligned}
    |S(u_h)(z_h)| &= \sum_{n=1}^N \int_{t_{n-1}}^{t_n} \Delta t\cdot |z_h(t_n)|\cdot \left| f(t_n,u_h(t_n)) - \int_{t_{n-1}}^{t_n} f(t,u_h(t))\,\text{d}t\right|\\
    &= \sum_{n=1}^N \int_{t_{n-1}}^{t_n} \Delta t^2\cdot |z_h(t_n)|\cdot \left| f'(\xi_n) + O(\Delta t)\right|,
  \end{aligned}
\end{equation}
as $u_h(t)=u_h(t_n)$ for $t\in (t_{n-1},t_n]$ and assuming that
$f(\cdot)$ is differentiable. This yields the same order as the
truncation error of the backward Euler method itself. The
consistency term cannot be neglected, but its evaluation must be
included as part of the error identity.
  
If efficient time-stepping methods are used in simulations and a posteriori error estimation based on the DWR method is applied, the consistency error hence has to be tracked. For details and applications to flow problems discretized by the Crank-Nicolson scheme 
and Fractional-Step-$\theta$ method, 
we refer to~\cite{MeiRi14,MeidnerRichter2015,Ri17_fsi}.

\subsection{Estimating the Non-Conformity Error}
\label{SubSec:conformity}

The application to non-conforming discretizations is more cumbersome and there is no one standard approach. This is already due to the fact that even the definition of the error $u-u_h$ can be difficult. This is the case, for example, if $u$ is given on a domain $\Omega$ and $u_h$ on a discretized domain $\Omega_h$. In this case, one remedy could be to limit all considerations to the intersection $\Omega\cap\Omega_h$, see for instance~\cite{MinakowskiRichter2019}.

Since the sources of non-conformity are manifold, we discuss two examples in the following, first classical non-conforming finite element approaches and then discretizations that are not based on variational principles at all.

\subsubsection{Non-Conforming Finite Elements}

Let $u\in U$ be the solution to
\begin{equation}
  \mathcal{A}(u)(v) = 0\quad\forall v\in V.
\end{equation}
By $U_h\not\subset U$ and $V_h\not\subset V$ we denote a
non-conforming finite element discretization. As specific problem, 
consider a $H^1$-elliptic problem in mind with $U=V=H^1_0(\Omega)$
and where $U_h=V_h$ is the non-conforming Crouzeix-Raviart element. We
must assume that the variational form $\mathcal{A}(\cdot)(\cdot)$ is
also defined in $U_h\times U_h$ and elliptic. Hence, we introduce
$X=U\cup U_h$ and consider $\mathcal{A}:X\times X\to\mathbb{R}$ and,
likewise, we assume that the functional is defined on
$J:X\to\mathbb{R}$, which is a limitation as, for instance, line
integrals along mesh edges are not well defined on $U_h$.

The direct application of Theorem~\ref{thm: error identity} is not possible, since the fact that the residual of the continuous solution disappears in a conformal discretization was exploited here, namely $\mathcal{L}'(u,z)(u-u_h,z-z_h)=0$. 
This is not necessarily the case for a non-conforming discretization. Instead, what remains is the term
\begin{equation}
  \mathcal{L}'(u,z)(u-u_h,z-z_h)=
  \frac{1}{2}\left(
  -\mathcal{A}(u)(z-z_h) + J'(u)(u-u_h) - \mathcal{A}'(u)(u-u_h,z)\right). 
\end{equation}
Its meaning must be considered on a case-by-case basis. For
Poisson's problem $\mathcal{A}(u)(v) = (\nabla u,\nabla v)-(f,v)$, the error identity from Theorem~\ref{thm: error identity} reads as
\begin{multline}
  J(u-u_h) = \frac{1}{2}\left(\rho(u_h)(u-u_h)+\rho^*(u_h,z_h)(z-z_h)\right) - \rho(u_h)(z_h)\\
  \frac{1}{2}\left(
  (f,z-z_h) - \left(\nabla u,\nabla (z-z_h)\right)+ J'(u)(u-u_h) - \left(\nabla (u-u_h),\nabla z\right)\right)
  +\mathcal{R}^{(3)}. 
\end{multline}
Using integration by parts, these terms get
\begin{equation}
  \begin{aligned}
    (f,z-z_h) - \left(\nabla u,\nabla (z-z_h)\right) &= \sum_{{K\in \mathcal{T}_h}} \langle \partial_n u,[z_h]\rangle_{{\partial K}}\\
    J'(u)(u-u_h) - \left(\nabla (u-u_h),\nabla z\right)&= \sum_{{K\in \mathcal{T}_h}} \langle \partial_n z,[u_h]\rangle_{{\partial K}},
  \end{aligned}
\end{equation}
where $[\cdot]$ denotes the jump over the element's edge $\partial K$ 
and $\mathcal{T}_h$ is the decomposition of $\Omega_h$. 
This term exactly measures the non-conformity of the discrete approximations $u_h$ and $z_h$. 
Instead of a rigorous bound, it can be estimated based on replacing $\partial_n u$ and $\partial_n z$ by discrete reconstructions, see~\cite{Grajewski} for a similar procedure. 
We refer to~\cite{MaVohYou20} for a detailed presentation of a goal-oriented error estimator for different conforming and non-conforming discretizations.

\subsubsection{Non-Variational Discretization Methods}

Finally, we explore 
to which extend the error estimator can be
applied to discretizations that cannot be written in the form of a
variational formulation at all. These could be, for example, finite
difference methods, or the approximation of differential equations
with neural networks like \emph{Physics Informed Neural Networks
(PINNs)} such as the \emph{Deep Ritz} method~\cite{EYu2017}. In both cases, the discrete approximation $u_h$ cannot be represented by a variational formulation.
Once again, we cannot rely on a uniform theoretical principle, but must argue on a case-by-case basis.

What we require in any case is the embedding of the discrete solution
$u_h$ into a function space $\mathcal{E}:u_h\mapsto U_h$ that is in some sense compatible, meaning that it is either conforming $U_h\subset U$ or that the variational form 
can be extended onto $X:=U\cup U_h$ such as in the case of the non-conforming Crouzeix-Raviart element. Considering finite difference approximations $\mathcal{E}(u_h) = I_h u_h$, the interpolation into a usual finite element space is one conforming option. In this setting, the original error identity from Theorem~\ref{thm: error identity} can directly be applied:
\begin{Theorem}[Error identity for conforming embeddings]
Let $u\in U$ and $z\in V$ be the primal and dual solutions. Let $u_h$ and $z_h$ be any discrete approximations. Further, let
\begin{equation}
\mathcal{E}:u_h\mapsto U_h\subset U,\quad
\mathcal{E}^*:z_h\mapsto V_h\subset V,
\end{equation}
be embeddings of the discrete approximations into subspaces. Then, it holds
\begin{multline}
J(u)-J(\mathcal{E}(u_h)) = \frac{1}{2}
\left(
\rho\left(\mathcal{E}(u_h)\right)\left(z-\mathcal{E}^*(z_h)\right)+
\rho^*\left(\mathcal{E}(u_h),\mathcal{E}^*(z_h)\right)\left(u-\mathcal{E}(u_h)\right)\right)\\
-\rho\left(\mathcal{E}(u_h)\right)\left(\mathcal{E}^*(z_h)\right) + \mathcal{R}^{(3)}.
\end{multline}
\end{Theorem}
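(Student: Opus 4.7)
The plan is to reduce this statement to a direct application of the original error identity from Theorem~1, once the embeddings have been used to place the discrete approximations into the continuous function spaces. Since by assumption $\mathcal{E}(u_h) \in U_h \subset U$ and $\mathcal{E}^*(z_h) \in V_h \subset V$, these embedded objects are admissible choices for the arbitrary but fixed arguments $\tilde u \in U$ and $\tilde z \in V$ appearing in the hypothesis of Theorem~1. No variational property of $u_h$ or $z_h$ is required; the conformity of the embedding is the only structural ingredient that is used.

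First I would set $\tilde u := \mathcal{E}(u_h) \in U$ and $\tilde z := \mathcal{E}^*(z_h) \in V$, and verify that the regularity assumptions $\mathcal{A} \in \mathcal{C}^3(U,V^*)$ and $J \in \mathcal{C}^3(U,\mathbb{R})$ underpinning Theorem~1 are (implicitly) at our disposal in this non-variational setting, since the theorem statement presupposes that $J$ and $\mathcal{A}$ make sense on the continuous spaces. Then I would invoke Theorem~1 verbatim with this choice of $\tilde u$ and $\tilde z$, using the unchanged definitions
\begin{align*}
\rho(\tilde u)(\cdot) &= -\mathcal{A}(\tilde u)(\cdot), \\
\rho^*(\tilde u,\tilde z)(\cdot) &= J'(\tilde u)(\cdot) - \mathcal{A}'(\tilde u)(\cdot,\tilde z),
\end{align*}
together with the remainder $\mathcal{R}^{(3)}$ of \eqref{eq: Error Estimator: Remainderterm}, where now $e = u-\mathcal{E}(u_h)$ and $e^* = z-\mathcal{E}^*(z_h)$. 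Substituting back $\tilde u = \mathcal{E}(u_h)$ and $\tilde z = \mathcal{E}^*(z_h)$ into the resulting identity yields exactly the claim.

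There is essentially no obstacle beyond bookkeeping: the statement is a packaging of Theorem~1 that records the fact that discretizations without a variational interpretation (e.g.\ finite differences or neural-network-based approximants) can still be analyzed by the DWR machinery, provided one commits to a reconstruction/embedding step. The only subtle point worth a remark in the write-up is that the term $\rho(\mathcal{E}(u_h))(\mathcal{E}^*(z_h))$ no longer has any reason to be small, since $\mathcal{E}(u_h)$ is not produced by any Galerkin-type condition; it therefore plays a role analogous to the iteration error $\eta_k$ in Section~\ref{Sec:single-goal} and must be evaluated explicitly rather than discarded.
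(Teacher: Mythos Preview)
Your proposal is correct and matches the paper's own approach exactly: the paper introduces this theorem by stating that ``the original error identity from Theorem~\ref{thm: error identity} can directly be applied,'' and provides no further argument, so your reduction via $\tilde u := \mathcal{E}(u_h)$, $\tilde z := \mathcal{E}^*(z_h)$ is precisely what is intended. Your additional remark about the residual term $\rho(\mathcal{E}(u_h))(\mathcal{E}^*(z_h))$ not vanishing is also apt and aligns with the surrounding discussion in the paper.
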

Like all previous error identities, this one suffers from the need 
to approximate numerically the primal and adjoint solutions $u\in U$ and $z\in V$. Furthermore, the embeddings $\mathcal{E}(u_h)$ and $\mathcal{E}^*(z_h)$ must be available for numerical evaluation. But most important, the error estimator is still based on variational principles although the discrete approximations $u_h$ and $z_h$ can be obtained in completely different ways. To evaluate the error, integrals over the computational domains containing the embeddings $\mathcal{E}(u_h)$ and $\mathcal{E}^*(z_h)$ must be computed. This will either require a mesh, which is all but trivial, if the choice of discretization method - e.g. smoothed particle hydrodynamics - was motivated as being mesh-free, or, if, for instance, neural networks are chosen for high-dimensional problems. Resorting to simple Monte Carlo quadrature avoids this problem, but it brings along a substantial quadrature error. We refer to~\cite{minakowski2021error} for a first discussion on this mostly open topic.

%
%
\section{Multi-Goal Oriented Error Control}
\label{Sec:Multigoal}

In the past two decades, numerous efforts regarding the efficient and robust numerical solution of time-dependent, non-linear, coupled partial differential 
equations (PDE) have been made. Often, such PDE systems arise in continuum mechanics from conservation laws such as conservation of 
mass, momentum, angular momentum, and energy.
Examples are porous media applications \cite{chavent1986mathematical,LeSchref99,Coussy2004}, multi-phase flow and heat transfer processes \cite{FaZha20}, and
fluid-structure interaction \cite{GaRa10,BaTaTe13,Ri17_fsi}. In addition, such multiphysics problems 
may be subject to inequality constraints, which result into 
coupled variational inequality systems (CVIS), for which multiphysics 
phase-field fracture is an example \cite{Wi20}. 
In such problems, often more than one single 
goal functional shall be controlled 
and measured up to a certain accuracy. This is the 
motivation
for multigoal-oriented error control and adaptivity that we will 
explain in this section.

\subsection{Combined Goal Functional}
Let us assume that we are interested in the evaluation of $N$ functionals, which we denote by 
$J_1, J_2, \ldots,J_N$. This can also be seen as a vector valued quantity of interest 
\begin{equation} \label{eq: def vecJ}
\vec{J}(v):=(J_1(v),J_2(v),\cdots, J_{N}(v) ).
\end{equation}
There are a lot of works on multi-goal oriented adaptivity \cite{ahuja2022multigoal,HaHou03,Ha08,KerPruChaLaf2017,KerPruChaLaf2017,PARDO20101953,EnWi17,EnLaWiPAMM18,EnLaWi18,BruZhuZwie16,endtmayerPAMM2022,BeEnLaWi2021a,BeEnWi2021P}. The aim of this section is to combine the functionals 
$J_1, J_2, \ldots,J_N$ into a single functional and apply the results from Section~\ref{Sec:single-goal}.
A simple idea to combine this functionals is to add them up.
We define $J_A$ as 
\begin{equation}\label{eq: def Addfunctionals}
	J_A := \sum_{i=1}^N J_i.
\end{equation}
This functional allows us to combine the functionals. 
Let us  assume that $u$ solves the primal problem \eqref{eq: General Modelproblem}, and let $\tilde{u} \in U_h^{(2)}$ be approximation to $u_h$ solving \eqref{eq: discrete primal}.
For instance, let us assume $N=3$ and 
the error $J_1(u)-J_1(\tilde{u})=1$, $J_2(u)-J_2(\tilde{u})=1$ and $J_3(u)-J_3(\tilde{u})=-2$. Then $J_A(u)-J_A(\tilde{u})=0$. This shows that $J_A$ is not suitable, since we get that the error vanishes even though the error does not vanish in any of the original functionals.

\subsection{Error-Weighting Function}
In this part, we introduce a weighting function to balance the sum of the different 
goal functionals. This function includes a sign evaluation in order to avoid 
cancellation of goal functionals with similar values, but different signs.

\begin{Definition}[Error-Weighting function]
	Let $ M \subseteq \mathbb{R}^N$. 
	The function $\mathfrak{E}: (\mathbb{R}^+_0)^N \times  M \mapsto \mathbb{R}^+_0$ is an \textit{error-weighting
		function} if  $\mathfrak{E}(\cdot,m) \in
	\mathcal{C}^3((\mathbb{R}^+_0)^N,\mathbb{R}^+_0)$ is strictly monotonically
	increasing in each component and $\mathfrak{E} (0,m)=0$ for all
	$m \in M$.
\end{Definition}
Examples for such error weighting functions are
\begin{equation}\label{eq:error weighting function:example 1}
\mathfrak{E}(x,m):=\sum_{i=1}^N \frac{x_i}{|m_i|},
\end{equation}
\begin{equation}\label{eq:error weighting function:example 2}
\mathfrak{E}(x,m):=\sum_{i=1}^N x_i,
\end{equation}
\begin{equation}\label{eq:error weighting function:example 3}
	\mathfrak{E}(x,m):=\sum_{i=1}^N \frac{x_i^p}{|m_i|^p} \qquad p \in (1,\infty),
\end{equation}
and
\begin{equation}\label{eq:error weighting function:example 4}
	\mathfrak{E}(x,m):=\sum_{i=1}^N \sqrt{x_i},
\end{equation}
for $x,m \in (\mathbb{R}^+_0)^N \times  M$. 
These functions should mimic some kind of norm or metric. Here, the elements $m\in M$ are used to weight the contributions. These are user chosen weights, or weights balancing the relative errors instead of the absolute ones.
Finally, we define the {error functional} as follows
\begin{align}\label{eq: TrueErrorrepresentationFunctional}
	\tilde{J}_{\mathfrak{E}}(v):=\mathfrak{E}(|\vec{J}(u)-\vec{J}(v)|_N, m) \qquad \forall v \in U,
\end{align}
where, $|\cdot|_N$ describes the component wise absolute value. 
It follows from the definition of $\mathfrak{E}$ that $J_\mathfrak{E}(v) \in \mathbb{R}^+_0$ for all $v \in V$. 

\begin{Remark}
	The error functional $\tilde{J}_{\mathfrak{E}}(v)$ mimics a semi-metric (as in \cite{SierpinskiTopo52,KhaKirMetric2001}) for the errors in the functionals $J_i$ Hence, $\tilde{J}_{\mathfrak{E}}(v)$ represents a
	semi-metric, which ensures that $\tilde{J}_{\mathfrak{E}}$  is
	monotonically increasing if $|J_i(u)-J_i(\tilde{u})|$ is monotonically
	increasing.	
\end{Remark}
We notice that $\tilde{J}_{\mathfrak{E}}(v)$ as defined in \eqref{eq: TrueErrorrepresentationFunctional} is not computable, since we do not know the exact solution $u$. 
And if we were to know $u$, we also would know all $J_i(u)$, for which we would not need any 
finite element simulations. In Section~\ref{Sec:single-goal}, we had a similar problem. There, we introduced the enriched solutions $u_h^{(2)}$ solving \eqref{eq: enriched primal}, which was used to replace all $u$ in the error identity of Theorem~\ref{thm: Error Representation}. To this end, 
we apply the same approach and we define the computable error functional
\begin{align}\label{eq: ErrorrepresentationFunctional}
	{J}_{\mathfrak{E}}(v):=\mathfrak{E}(|\vec{J}(u_h^{(2)})-\vec{J}(v)|_N,m) \qquad \forall v \in U,
\end{align}
where $u_h^{(2)}$ solves the enriched primal problem \eqref{eq: enriched primal}.
This functional allows us to use the ideas from 
Section~\ref{Sec:single-goal}
with $J=J_\mathfrak{E}$.
From \eqref{eq: ErrorrepresentationFunctional} we conclude that
\begin{equation} \label{eq: Errorweighting-function def}
{J}_{\mathfrak{E}}'(v)(\delta v)=\sum_{i=1}^{N}\operatorname{sign}\left(J_i(u_h^{(2)})-J_i(v)\right)\frac{\partial \mathfrak{E}}{\partial x_i}(|\vec{J}(u_h^{(2)})-\vec{J}(v)|_N,m) \qquad \forall v,\partial v \in U,
\end{equation}
The value $\operatorname{sign}\left(J_i(u)-J_i(v)\right)$ which is approximated by the 
$\operatorname{sign}\left(J_i(u_h^{(2)})-J_i(v)\right)$ in \eqref{eq: Errorweighting-function def} is important to avoid error cancellations. In \cite{Ha08,HaHou03}, the sign was approximated using an adjoint to adjoint (dual to dual) problem, whereas in a prior work \cite{EnLaWi18} the sign was approximated using the difference $J_i(u_h^{(2)})-J_i(\tilde{u})$. For a linear partial differential equation and linear goal functionals those two methods coincide. 

Common choices are $m=|\vec{J}(\tilde{u})|_N$ or $m=|\vec{J}(u_h^{(2)})|_N$. Together with the error function \eqref{eq:error weighting function:example 1}, this choice aims for a similar relative error in all the functionals. In contrast the error function \eqref{eq:error weighting function:example 2} aims for a similar absolute error. Furthermore, the error function \eqref{eq:error weighting function:example 3} penalizes larger relative errors, and the error function \eqref{eq:error weighting function:example 4} aims for a similar decrease of the error in all the functionals.
The choice $m_i=\omega_i |J_i(\tilde{u})| $, {where $\omega_i \in\mathbb{R}^+_0$} are user chosen weights,
in combination with the error functional \eqref{eq:error weighting function:example 1}, leads to an error functional with almost the same properties as $J_c$ in \cite{HaHou03,Ha08,EnLaWi18,EnLaNeiWoWiPAMM2019,EnLaNeiWoWi2020,EnLaWiPAMM18,EnWi17,endtmayerPAMM2022} defined as
\begin{equation} \label{eq: def J_c}
	J_c(v):=\sum_{i=1}^{N}w_i J_i(v),
\end{equation}
where $w_i:=\omega_i\operatorname{sign}\left(J_i(u_h^{(2)})-J_i(\tilde{u})\right)/|J_i(\tilde{u})|$.

\subsection{Error Localization}
Since we work with the combined goal functional, the localization procedure 
can be carried out in the same fashion as previously described in 
Section~\ref{SubSec:Localization}. An immediate consequence 
of Proposition \ref{prop_DWR_practical} is
\begin{Proposition}[Practical error estimator for  the functional $J_\mathfrak{E}$]
\label{prop_DWR_practical_multigoal}
Let $\tilde{u}\in U_h$ be a low-order approximation to \eqref{eq: discrete primal}, $u_h^{(2)}\in U_h^{(2)}$ the higher-order solution to \eqref{eq: enriched primal}, and
$\tilde{z} \in U_h$ be a low-order approximation to \eqref{eq: discrete adjoint} $z_h^{(2)}\in V_h^{(2)}$ the higher-order adjoint solutions \eqref{eq: enriched Adjointproblem},
respectively.
The practical localized PU error estimator reads for the error functional reads
\[
J_\mathfrak{E}(u)-J_\mathfrak{E}(\tilde{u}) \approx \eta^{(2)} = \sum_{i=1}^M
\frac{1}{2}\rho(\tilde{u})((z_h^{(2)} - \tilde{z})\chi_i)
+\frac{1}{2}\rho^*(\tilde{u},\tilde{z})((u_h^{(2)} - \tilde{u})\chi_i)
+\rho(\tilde{u})(\tilde{z})
\]
where we now re-define the previous notation and obtain as error parts
\[
\eta^{(2)} = \eta_h^{(2)} + \eta_k := \sum_{i=1}^M (\eta_p + \eta_a) + \eta_k 
\]
with
\begin{align*}
\eta_p &:= \eta_p(i):= \frac{1}{2}\rho(\tilde{u})((z_h^{(2)} - \tilde{z})\chi_i),\\
\eta_a &:= \eta_a(i):= \frac{1}{2}\rho^*(\tilde{u},\tilde{z})((u_h^{(2)} - \tilde{u})\chi_i),\\
\eta_k &:= \rho(\tilde{u})(\tilde{z}).
\end{align*}
\end{Proposition}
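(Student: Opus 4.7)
The plan is to reduce this proposition to the already-established single-goal result, Proposition~\ref{prop_DWR_practical}, by viewing the error functional $J_{\mathfrak{E}}$ defined in \eqref{eq: ErrorrepresentationFunctional} as a single scalar goal functional and then feeding it into the machinery developed in Section~\ref{Sec:single-goal}. The key observation is that $u_h^{(2)}$ and the vector $m$ appearing inside $J_{\mathfrak{E}}$ are fixed data (not arguments), so that $J_{\mathfrak{E}}$ is a genuine functional of $v\in U$ alone, and its Gâteaux derivative is precisely the one computed in \eqref{eq: Errorweighting-function def}.

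First, I would verify that $J_{\mathfrak{E}}$ satisfies the smoothness hypothesis of Theorem~\ref{thm: Error Representation} on the enriched space $U_h^{(2)}$, which is what Proposition~\ref{prop_DWR_practical} ultimately relies on. Since $\mathfrak{E}(\cdot,m)\in \mathcal{C}^3((\mathbb{R}^+_0)^N,\mathbb{R}^+_0)$ by definition of the error-weighting function and each $J_i$ is smooth enough, $J_{\mathfrak{E}}$ inherits $\mathcal{C}^3$-regularity on $U_h^{(2)}$ away from the points where one of the components $J_i(u_h^{(2)})-J_i(v)$ vanishes; the sign factors appearing in the derivative \eqref{eq: Errorweighting-function def} are treated as fixed constants determined at the approximate argument $\tilde u$, precisely as was done in the previous construction of $J_c$ in \eqref{eq: def J_c}.

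Next, the adjoint problem \eqref{eq: General Adjointproblem} and its enriched discretization \eqref{eq: enriched Adjointproblem} are formulated with the right-hand side $J_{\mathfrak{E}}'(\tilde u)(\cdot)$ in place of $J'(\tilde u)(\cdot)$, producing the adjoint weights $\tilde z$ and $z_h^{(2)}$ appearing in the statement. With this choice, the residuals $\rho$ and $\rho^*$ are defined exactly as in \eqref{eq:Error Estimator: primal}--\eqref{eq:Error Estimator: adjoint}, so Proposition~\ref{prop_DWR_practical} applied with $J=J_{\mathfrak{E}}$ immediately yields the unlocalized identity
\[
J_{\mathfrak{E}}(u)-J_{\mathfrak{E}}(\tilde u)\approx \tfrac{1}{2}\rho(\tilde u)(z_h^{(2)}-\tilde z)+\tfrac{1}{2}\rho^*(\tilde u,\tilde z)(u_h^{(2)}-\tilde u)+\rho(\tilde u)(\tilde z).
\]

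Finally, I would localize by inserting the partition-of-unity property \eqref{eq_PU_one}, $\sum_{i=1}^M \chi_i \equiv 1$, inside the weight slots of the residuals $\rho(\tilde u)(\cdot)$ and $\rho^*(\tilde u,\tilde z)(\cdot)$ and using linearity of these residuals in their test-function argument. This yields the claimed sum over PU-indicators, with the iteration-error part $\eta_k=\rho(\tilde u)(\tilde z)$ kept as a single global contribution exactly as in Corollary~\ref{coro_localized_eta}. The only conceptual point one should be careful about is the interpretation of the sign terms in $J_{\mathfrak{E}}'$; I expect this is the main place where one could object, but since $\tilde u$ and $u_h^{(2)}$ are both fixed during the estimator evaluation, the signs are mere constants and pose no obstacle to differentiation or to re-using the single-goal result verbatim.
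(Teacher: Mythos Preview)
Your approach is correct and matches the paper's: the paper states this proposition as ``an immediate consequence of Proposition~\ref{prop_DWR_practical}'' obtained by treating the combined functional $J_{\mathfrak{E}}$ as a single goal functional, which is precisely the reduction you carry out. Your proof sketch simply spells out the details (smoothness via the $\mathcal{C}^3$-regularity of $\mathfrak{E}$, the adjoint right-hand side, and the PU localization) that the paper leaves implicit.
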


\subsection{Adaptive Algorithm}
Summarizing the previous ingredients allows us to formulate multigoal algorithms 
for adaptivity in which non-linear iteration errors are balanced with discretization 
errors.

\begin{algorithm}[H]
	\caption{The adaptive multigoal algorithm} \label{Alg: adapt. multigoal Algorithm}
	\begin{algorithmic}[1]
		\Procedure{Multigoal Adaptive}{$\vec{J}$, $\mathcal{A}$, $\mathcal{T}_0$, TOL, maxNDoFs}      
		\State $k\leftarrow 0$,$\mathcal{T}_k\leftarrow \mathcal{T}_0$, $\eta_h\leftarrow \infty$
		\While{$\eta_h> 10^{-2}$ TOL \& $|\mathcal{T}_k|$ $\geq$ maxNDoFs }
		\State Solve \eqref{eq: enriched primal} to obtain $u_h^{(2)}$ with some non-linear solver like Newton's method \label{Alg: Multi: Step uh2}
		\State Solve \eqref{eq: discrete primal} to obtain $\tilde{u}$ with some non-linear solver like Newton's method \label{Alg: Multi: Step uh}
		\State Construct $J_\mathfrak{E}$.
		\State Solve \eqref{eq: enriched Adjointproblem} and \eqref{eq: discrete adjoint} to obtain $z_h^{(2)}$ and $\tilde{z}$ using some linear solver.
		\State Compute $\eta_h$ and  the node-wise error contribution $\eta_i^{PU}$ as in \eqref{eq: eta_iPU}.
		\State Distribute $\eta_i^{PU}$ equally to all elements that share the node $i$.
		\State Mark the elements with some marking strategy like D\"orfler marking \cite{Doerfler:1996a}.
		\State Refine the mesh according to the marked cells
		\State $k\leftarrow k+1$
		\EndWhile
		\Return $\vec{J}(u_k)$
		\EndProcedure
	\end{algorithmic}	
\end{algorithm}
\begin{Remark}
Since we have to approximate $\operatorname{sign}\left(J_i(u_h^{(2)})-J_i(v)\right)$,
	we only provide an algorithm using enriched spaces.
\end{Remark}
\begin{Remark}
	If the non-linear solver is stopped using the iteration error estimator $\eta_k$, we require $u_h^{(2)}$ before $\tilde{u}$. 
	Therefore, in contrast to Algorithm~\ref{Alg: adapt. Alg}, in Algorithm~\ref{Alg: adapt. multigoal Algorithm} Line~\ref{Alg: Multi: Step uh2} and \ref{Alg: Multi: Step uh} must not be swapped.
\end{Remark}

\section{Applications}
\label{sec_applications}
In this section, we substantiate 
our theoretical results and numerical algorithms
from the previous sections with the help of 
four numerical examples. These include 
linear and non-linear stationary settings as well as a non-linear space-time 
test. All examples are evaluated with the help of state-of-the-art 
measures by observing error reductions, reductions in the estimators, 
effectivity indices and indicator indices. These demonstrate the performance 
of goal-oriented adaptivity for different types of discretizations.
The numerical tests are computed with the open-source finite element 
libraries deal.II \cite{BangerthHartmannKanschat2007,dealII90,deal2020} 
and MFEM \cite{mfem:2021,mfem-web}. Parts of our code developments 
are open-source on GitHub\footnote{\url{https://github.com/tommeswick/}}.

\subsection{Poisson's Problem}
\label{Subsec: Example1}
The purpose of this first numerical example is to illustrate some of the 
previous algorithmic and theoretical developments with the help
of a numerical experiment including open-source code developments. We span 
from the problem statement, over discretization, numerical solution to 
goal functional evaluations on adaptively refined meshes using 
algorithms from Section \ref{sec_adaptive_algorithms_single_goal}.
To this end, we employ a simple example, namely Poisson's problem in two dimensions. In the numerical simulations, we also demonstrate trivial 
effects such as that the dual space must be richer than the primal function space for the error interpolations, as otherwise the error estimator is identically zero due to Galerkin orthogonality; cf. \eqref{ns:id2}.

\subsubsection{Open-Source Programming Code}
The code is based on 
deal.II \cite{BangerthHartmannKanschat2007,deal2020} and the current release version 9.5.1 
\cite{dealII95}. Furthermore, this code is 
published open-source on GitHub\footnote{\url{https://github.com/tommeswick/PU-DWR-Poisson}}.
Conceptually, this code builds directly upon \cite{RiWi15_dwr}.

\subsubsection{Problem Statement: PDE and Boundary Conditions}
Let $\Omega:=(0,1)^2$ be the domain with 
the Dirichlet boundary $\partial\Omega$.
Let $f:\Omega\to\mathbb{R}$ be some volume force. Find $u:\bar{\Omega}\to\mathbb{R}$ such that
\begin{align*}
- \Delta u &= f \quad\text{in } \Omega, \\
u &= 0 \quad\text{on } \partial \Omega, 
\end{align*}
where $f=-1$. 

\subsubsection{Weak Form, Discretization, Numerical Solution}
We define the function spaces, here $U = V:= H^1_0(\Omega)$. Then, the weak 
form reads: Find $u\in U$ such that
\[
\mathcal{A}(u)(\psi) = l(\psi) \quad\forall \psi\in V,
\]
with
\[
\mathcal{A}(u)(\psi):= \int_{\Omega} \nabla u\cdot \nabla \psi\, dx, 
\quad \mbox{and} \quad 
l(\psi) := \int_{\Omega} f\psi\, dx.
\]
The discrete problem is formulated on quadrilateral elements (Section \ref{sec_FEM_Qk})  
as decomposition of the 
domain $\Omega$ and reads: Find $u_{h}\in U_h$ such that
\[
\mathcal{A}(u_{h})(\psi_{h}) = l(\psi_{h}) \quad\forall \psi_{h}\in V_h,
\]
with
\[
\mathcal{A}(u_{h})(\psi_{h}):= \int_{\Omega} \nabla u_{h}\cdot \nabla \psi_{h}\, dx, 
\quad \mbox{and} \quad 
l(\psi_{h}) := \int_{\Omega} f\psi_{h}\, dx.
\]
For implementation reasons, the numerical solution is obtained 
within a Newton scheme
in which the linear equations are solved with a direct solver (UMFPACK \cite{UMFPACK}).
Clearly, the problem is linear and for this reason Newton's method converges within one 
single iteration. The main reason using a 
Newton method is that the code can easily be extended 
to non-linear problems. 

\subsubsection{Goal Functional (Quantity of Interest) and Adjoint Problem}
\index{Goal functional}
\index{Quantity of interest}
Furthermore, the goal functional 
is given as a point evaluation in the middle point:
\[
J(u) = u(0.5,0.5).
\]
For the later comparison, the reference value is determined as
\begin{equation}
\label{eq_ref_value_736}
u_{ref}(0.5,0.5) := -7.3671353258859554e-02,
\end{equation}
and was obtained on a globally refined super-mesh.

The adjoint problem is derived as in Equation \eqref{eq: General Adjointproblem}
\[
\text{Find } z\in V: \quad  \mathcal{A}(\varphi,z) = J(\varphi) \quad\forall \varphi\in U,
\]
where we notice that both the left hand side and right sides are linear. Specifically, 
these are given as:
\begin{equation*}
\mathcal{A}(\varphi,z) = \int_{\Omega} \nabla \varphi\cdot \nabla z\, dx, \quad
J(\varphi) = \varphi(0.5,0.5).
\end{equation*}
The adjoint is solved similar to the primal problem, namely with
Newton's method (converging
in one single iteration) and UMFPACK are employed again. Here, Newton's method 
is not at all necessary since 
the adjoint is always linear as previously discussed. However, for implementation convenience, we employ 
the same Newton solver for the primal and adjoint problem, which constitutes solely our reason.

\subsubsection{Objectives}
The objectives of our studies are:
\begin{itemize}
\item Evaluating goal-oriented exact error $J(u_{ref}) - J(u_{h})$;
\item Evaluating error estimator $\eta$;
\item Computing $I_\mathrm{eff}$ and $I_{ind}$;
\item Employing PU-DWR for local mesh adaptivity;
\item Showing necessity of higher-order information of the adjoint 
$z$ in $z_{h}^{high} - z_{h}$ in the dual-weighted residual estimator;
\item Employing higher-order shape functions;
\item Employing higher-order PU function.
\end{itemize}

\subsubsection{Discussion and Interpretation of Our Findings}
In this section, we conduct in total ten numerical experiments to 
investigate our previous objectives.

\paragraph{Computation 1: $u \in Q_1$ and $z \in Q_1$}
\index{Galerkin orthogonality}
In this first numerical test (Table \ref{tab_1456_1}), the adjoint has the same order as the primal solution,
and due to Galerkin orthogonality (see e.g., \eqref{ns:id2}), the estimator $\eta$ is identical to zero
and consequently, the adaptive algorithm stops after the first iteration, 
which is certainly not what we are interested in.

\begin{table}[h!]
\begin{center}
{
\begin{verbatim}
================================================================================
Dofs    Exact err       Est err         Est ind         Eff             Ind
--------------------------------------------------------------------------------
18      2.01e-02        0.00e+00        0.00e+00        0.00e+00        0.00e+00
================================================================================
\end{verbatim}
}
\caption{Example 1: Computation 1.}
\label{tab_1456_1}
\end{center}
\end{table}

\paragraph{Computation 2: $u \in Q_1$ and $z \in Q_2$}
In this second experiment (Table \ref{tab_1456_2}), 
we choose now a higher-order adjoint solution and obtain 
a typical adaptive loop. In the true (exact) error and the estimator $\eta$, we observe 
quadratic convergence as to be expected. The $I_\mathrm{eff}$ and $I_{ind}$ are around the 
optimal value $1$. These findings are in excellent agreement with similar results 
published in the literature.

\begin{table}[h!]
\begin{center}
{
\begin{verbatim}
================================================================================
Dofs    Exact err       Est err         Est ind         Eff             Ind
--------------------------------------------------------------------------------
18      2.01e-02        2.00e-02        2.00e-02        9.98e-01        9.98e-01
50      4.01e-03        4.03e-03        4.75e-03        1.00e+00        1.18e+00
162     9.27e-04        9.28e-04        1.17e-03        1.00e+00        1.26e+00
482     2.37e-04        2.44e-04        3.08e-04        1.03e+00        1.30e+00
1794    5.82e-05        5.91e-05        7.52e-05        1.02e+00        1.29e+00
4978    1.73e-05        1.92e-05        2.34e-05        1.11e+00        1.35e+00
14530   5.13e-06        6.02e-06        7.74e-06        1.18e+00        1.51e+00
================================================================================
\end{verbatim}
}
\caption{Example 1: Computation 2.}
\label{tab_1456_2}
\end{center}
\end{table}

\paragraph{Computation 3: $u \in Q_1$ and $z \in Q_3$}

In this third example (Table \ref{tab_1456_3}), 
we increase the adjoint polynomial order and observe that 
we obtain roughly the same error tolerance of about $10^{-6}$ 
with less degrees of freedom. The $I_\mathrm{eff}$ is still optimal, while $I_{ind}$ shows 
a slight overestimation with $I_{ind} \approx 2$.

\begin{table}[h!]
\begin{center}
{
\begin{verbatim}
================================================================================
Dofs    Exact err       Est err         Est ind         Eff             Ind
--------------------------------------------------------------------------------
18      2.01e-02        2.01e-02        3.43e-02        9.99e-01        1.71e+00
50      4.01e-03        4.01e-03        8.57e-03        1.00e+00        2.14e+00
162     9.27e-04        9.27e-04        1.99e-03        1.00e+00        2.14e+00
274     3.33e-04        3.98e-04        7.02e-04        1.20e+00        2.11e+00
994     8.45e-05        9.16e-05        1.67e-04        1.08e+00        1.98e+00
2578    2.39e-05        2.65e-05        4.64e-05        1.11e+00        1.94e+00
8482    6.00e-06        6.35e-06        1.21e-05        1.06e+00        2.01e+00
================================================================================
\end{verbatim}
}
\caption{Example 1: Computation 3.}
\label{tab_1456_3}
\end{center}
\end{table}

\paragraph{Computation 4: $u \in Q_1$ and $z \in Q_4$}
In this fourth numerical test (Table \ref{tab_1456_4}), the results are close to the previous setting, which 
basically shows that such high-order adjoint solutions, do not increase necessarily 
anymore the error estimator and adaptivity.

\begin{table}[h!]
\begin{center}
{
\begin{verbatim}
================================================================================
Dofs    Exact err       Est err         Est ind         Eff             Ind
--------------------------------------------------------------------------------
18      2.01e-02        2.01e-02        4.64e-02        9.99e-01        2.31e+00
50      4.01e-03        4.01e-03        1.02e-02        1.00e+00        2.54e+00
162     9.27e-04        9.27e-04        2.45e-03        1.00e+00        2.65e+00
482     2.37e-04        2.44e-04        6.25e-04        1.03e+00        2.63e+00
802     9.72e-05        1.12e-04        2.10e-04        1.16e+00        2.16e+00
2578    2.39e-05        2.65e-05        5.21e-05        1.11e+00        2.18e+00
8290    6.12e-06        6.57e-06        1.37e-05        1.07e+00        2.24e+00
================================================================================
\end{verbatim}
}
\caption{Example 1: Computation 4.}
\label{tab_1456_4}
\end{center}
\end{table}

\paragraph{Computation 5: $u \in Q_2$ and $z \in Q_1$}
The fifth numerical experiment (Table \ref{tab_1456_5}) has the inverted polynomial order in which 
Galerkin orthogonality is even more violated than in the $Q_1/Q_1$ case. Our
numerical results confirm the theory.

\begin{table}[h!]
\begin{center}
{
\begin{verbatim}
================================================================================
Dofs    Exact err       Est err         Est ind         Eff             Ind
--------------------------------------------------------------------------------
50      4.66e-05        0.00e+00        0.00e+00        0.00e+00        0.00e+00
================================================================================
\end{verbatim}
}
\caption{Example 1: Computation 5.}
\label{tab_1456_5}
\end{center}
\end{table}

\paragraph{Computation 6: $u \in Q_2$ and $z \in Q_2$}
In the sixth test (Table \ref{tab_1456_6}), we are in the same situation with equal-order polynomials 
as in the $Q_1/Q_1$ case, and consequently, again due to Galerkin orthogonality 
the estimator $\eta$ is zero.

\begin{table}[h!]
\begin{center}
{
\begin{verbatim}
================================================================================
Dofs    Exact err       Est err         Est ind         Eff             Ind
--------------------------------------------------------------------------------
50      4.66e-05        0.00e+00        0.00e+00        0.00e+00        0.00e+00
================================================================================
\end{verbatim}
}
\caption{Example 1: Computation 6.}
\label{tab_1456_6}
\end{center}
\end{table}

\paragraph{Computation 7: $u \in Q_2$ and $z \in Q_3$}

We perform now a seventh experiment (Table \ref{tab_1456_7}), which again works. In the last row,
the $I_\mathrm{eff}$ is off, likely to the reason that the reference value \eqref{eq_ref_value_736} is not 
accurate enough anymore. This shows that numerically obtained reference values 
must be computed with care.

\begin{table}[h!]
\begin{center}
{
\begin{verbatim}
================================================================================
Dofs    Exact err       Est err         Est ind         Eff             Ind
--------------------------------------------------------------------------------
50      4.66e-05        2.31e-05        1.27e-03        4.96e-01        2.72e+01
162     1.98e-05        1.96e-05        1.09e-04        9.88e-01        5.47e+00
578     1.45e-06        1.44e-06        7.62e-06        9.96e-01        5.27e+00
1826    2.54e-08        3.31e-08        7.26e-07        1.31e+00        2.86e+01
6978    1.96e-09        7.72e-11        5.27e-08        3.94e-02        2.69e+01
================================================================================
\end{verbatim}
}
\caption{Example 1: Computation 7.}
\label{tab_1456_7}
\end{center}
\end{table}

\paragraph{Computation 8: $u \in Q_2$ and $z \in Q_4$}
In this eighth example (Table \ref{tab_1456_8}), the results are similar to the previous test case, which 
basically confirms the fourth example, that higher-order adjoint do not contribute 
to better findings anymore for this specific configuration.

\begin{table}[h!]
\begin{center}
{
\begin{verbatim}
================================================================================
Dofs    Exact err       Est err         Est ind         Eff             Ind
--------------------------------------------------------------------------------
50      4.66e-05        3.31e-05        1.33e-03        7.09e-01        2.85e+01
162     1.98e-05        1.98e-05        9.63e-05        1.00e+00        4.86e+00
578     1.45e-06        1.45e-06        6.88e-06        1.00e+00        4.75e+00
1826    2.54e-08        3.38e-08        6.88e-07        1.33e+00        2.71e+01
6978    1.96e-09        4.32e-11        5.00e-08        2.20e-02        2.55e+01
================================================================================
\end{verbatim}
}
\caption{Example 1: Computation 8.}
\label{tab_1456_8}
\end{center}
\end{table}

\paragraph{Computation 9: $u \in Q_1$ and $z \in Q_2$ and PU $Q_2$}
We finally conduct two tests with higher-order PU polynomial degrees. 
In Table \ref{tab_1456_9}, the $I_\mathrm{eff}$ performs very well, while the indicator index shows over estimation
of a factor $4$.

\begin{table}[h!]
\begin{center}
{
\begin{verbatim}
================================================================================
Dofs    Exact err       Est err         Est ind         Eff             Ind
--------------------------------------------------------------------------------
18      2.01e-02        2.00e-02        3.61e-02        9.98e-01        1.80e+00
50      4.01e-03        4.03e-03        1.30e-02        1.00e+00        3.24e+00
162     9.27e-04        9.28e-04        3.70e-03        1.00e+00        4.00e+00
482     2.37e-04        2.40e-04        1.05e-03        1.01e+00        4.40e+00
1106    7.73e-05        8.52e-05        3.27e-04        1.10e+00        4.23e+00
3810    2.01e-05        2.09e-05        8.62e-05        1.04e+00        4.29e+00
13250   5.39e-06        6.01e-06        2.39e-05        1.11e+00        4.43e+00
================================================================================
\end{verbatim}
}
\caption{Example 1: Computation 9.}
\label{tab_1456_9}
\end{center}
\end{table}

\paragraph{Computation 10: $u \in Q_1$ and $z \in Q_2$ and PU $Q_3$}
In this final test (Table \ref{tab_1456_10}), the indicator index shows 
an overestimation of a factor about 
$8$. In terms of the true error and estimated error as well as the $I_\mathrm{eff}$, the results 
are close to being optimal, while the indicator index shows an over estimation. 
With respect to a higher computational cost in computing the PU, these findings 
suggest that a low-order PU for this configuration is sufficient.

\begin{table}[h!]
\begin{center}
{
\begin{verbatim}
================================================================================
Dofs    Exact err       Est err         Est ind         Eff             Ind
--------------------------------------------------------------------------------
18      2.01e-02        2.00e-02        6.22e-02        9.98e-01        3.10e+00
50      4.01e-03        4.03e-03        2.18e-02        1.00e+00        5.43e+00
162     9.27e-04        9.28e-04        6.14e-03        1.00e+00        6.62e+00
482     2.37e-04        2.32e-04        1.69e-03        9.78e-01        7.11e+00
1602    6.21e-05        6.95e-05        4.61e-04        1.12e+00        7.42e+00
5618    1.59e-05        1.74e-05        1.22e-04        1.10e+00        7.71e+00
19602   4.32e-06        5.16e-06        3.31e-05        1.19e+00        7.67e+00
================================================================================
\end{verbatim}
}
\caption{Example 1: Computation 10.}
\label{tab_1456_10}
\end{center}
\end{table}

\paragraph{Graphical output}
Finally, from the \texttt{vtk} raw data, using \texttt{visit} \cite{visit}, 
we show some graphical solutions in Figure \ref{fig_DWR} that illustrate 
the performance of our algorithms.

\begin{figure}[H]
\begin{center}
  \includegraphics[width=5.2cm]{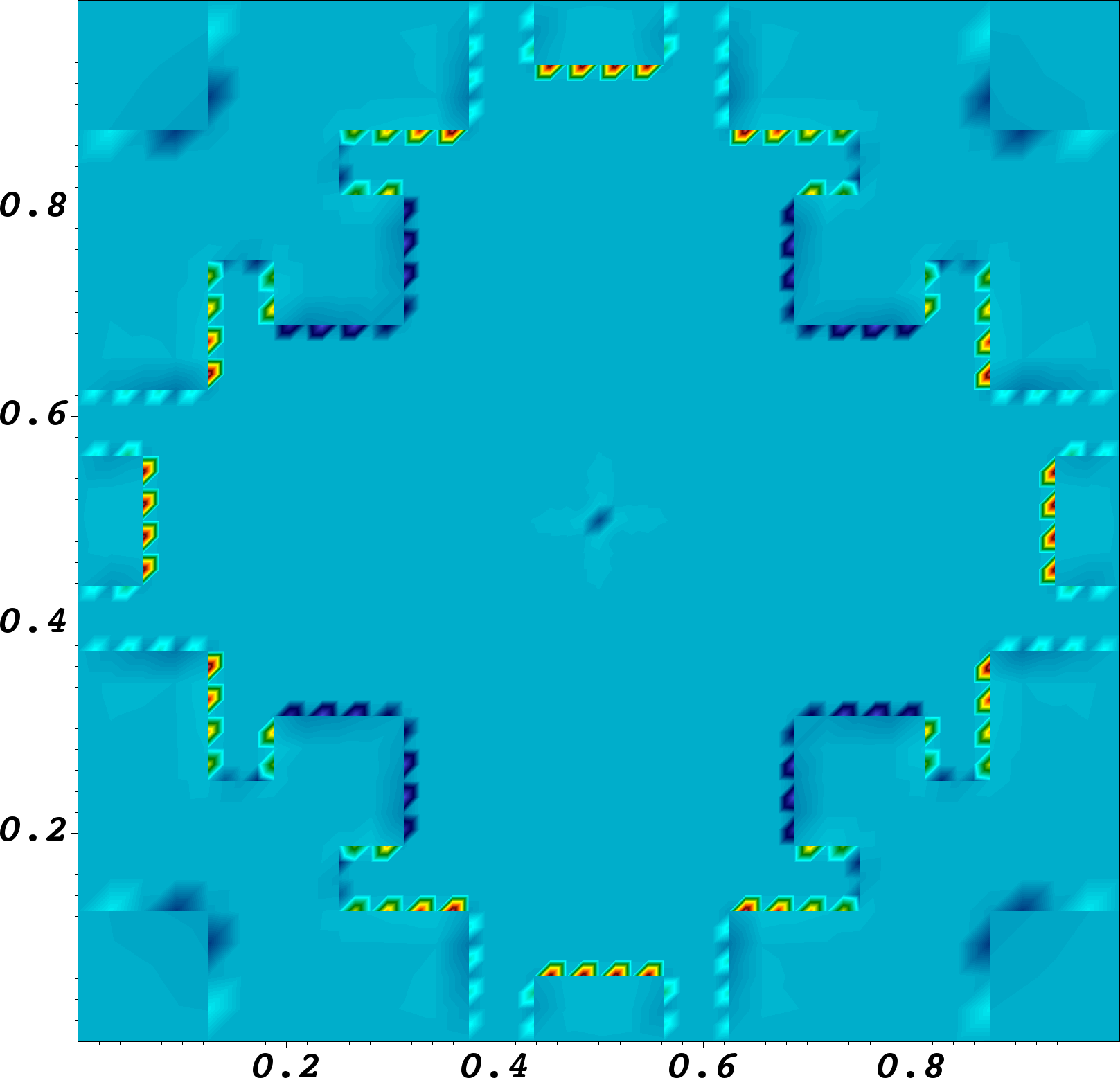}
  \includegraphics[width=5.2cm]{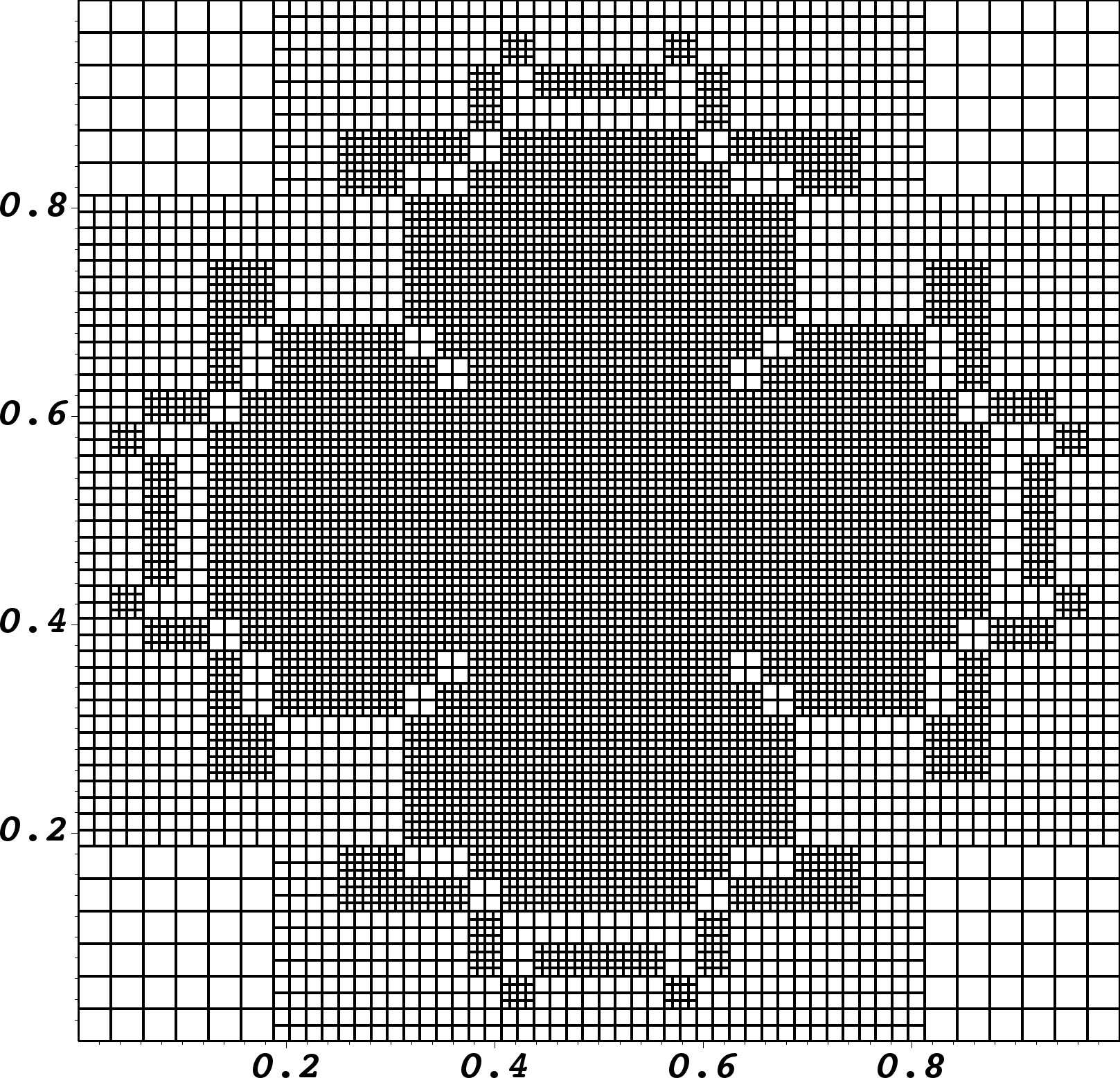}\\[0.5em]
  \includegraphics[width=5.2cm]{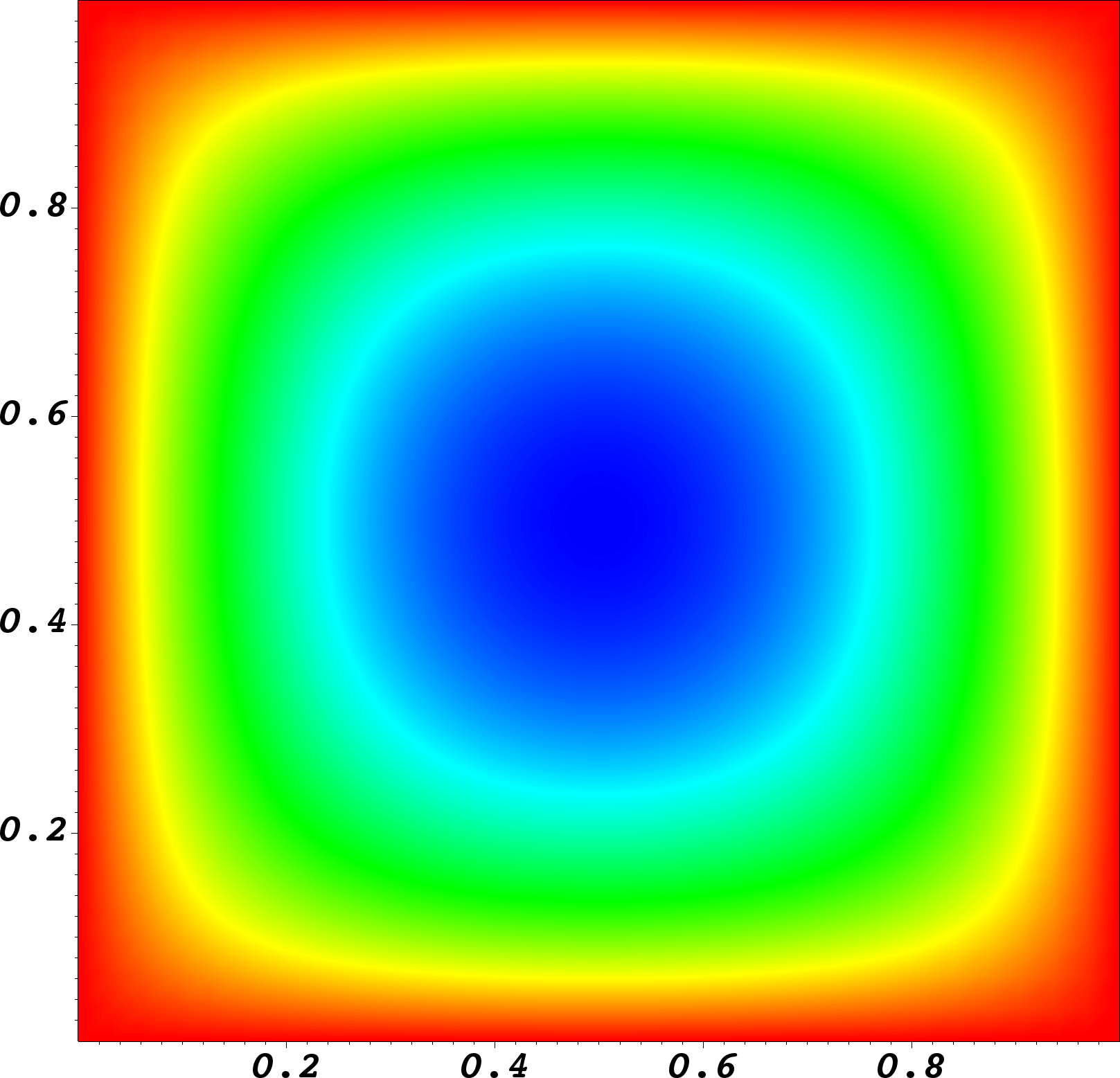}
  \includegraphics[width=5.2cm]{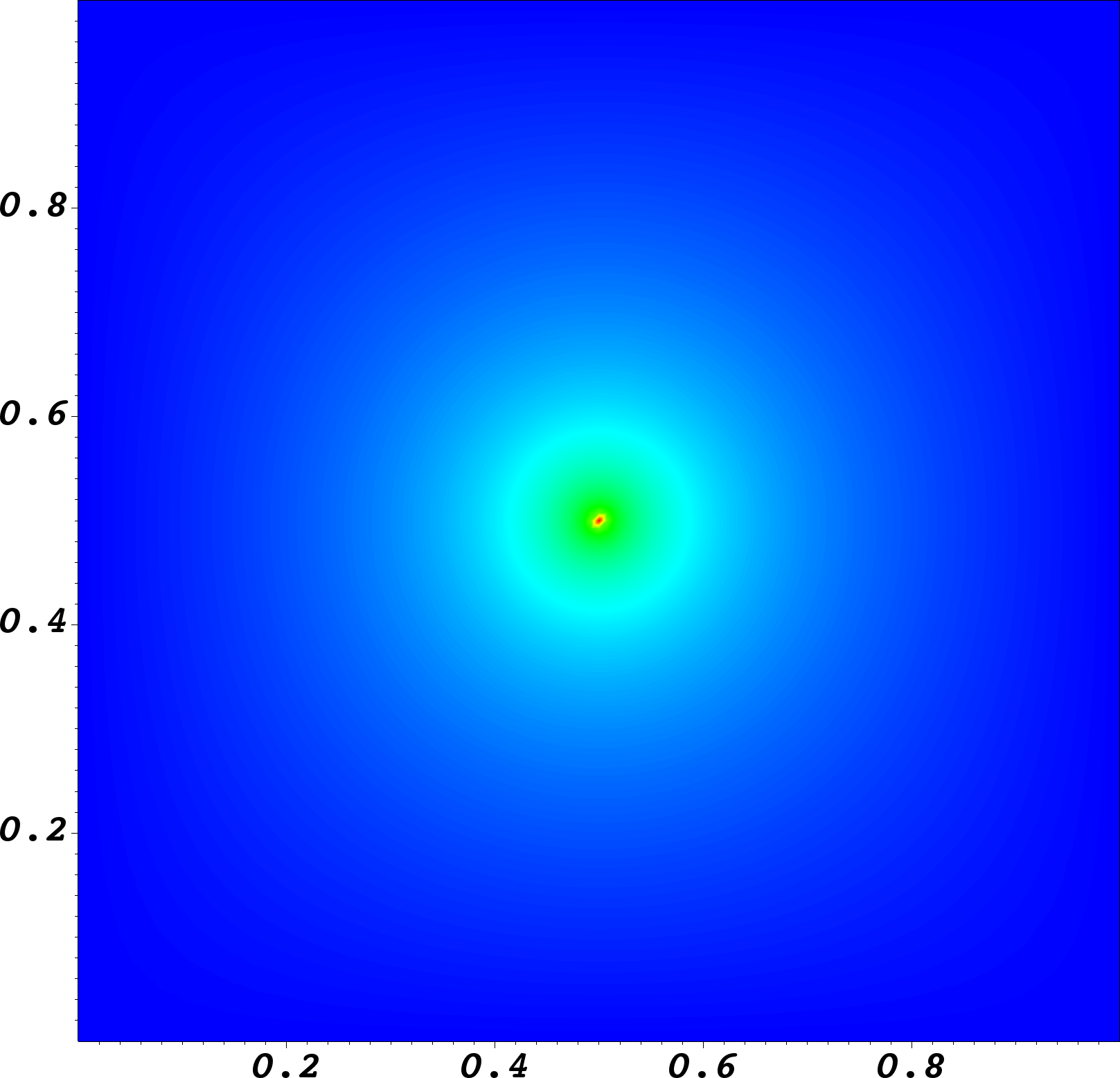}
   \caption{Example 1: Going from left to right and top to bottom: error indicators, adaptive mesh, primal solution and adjoint solution.}
\label{fig_DWR}
\end{center}
\end{figure}

\subsection{Non-linear Elliptic Boundary Value Problems}
\label{Subsec: Example2}
In the second example, we consider the homogeneous Dirichlet boundary value problem 
for a non-linear elliptic partial differential equation in the plane domain $\Omega=(0,5)\times (0,3) \setminus \left((1,2)^2 \cup (3,4)\times (1,2) \right) \subset \mathbb{R}^2$ visualized 
together with the initial mesh and the quantities of interest in Figure~\ref{fig: Ex2: Init Mesh+ Mesh25}.

\subsubsection{Strong and Weak Problem Formulations}
We look for some function $u$ such that
\begin{equation}
\label{eq:Ex2:pde}
-\text{div}(\nu(|\nabla u|)\nabla u)=f\; \text{in}\;\Omega
 \quad \text{and} \quad u=0 \; \text{on}\; \partial\Omega,
\end{equation}
with 
$\nu(s)=2+\text{arctan}(s^2)$ 
and given right-hand side $f=10$.
Such kind of non-linear partial differential equations arise, for instance, 
in magneto-static where $\nu$ is the reluctivity that non-linearly depends on the gradient of $u$;
see, e.g., \cite{ELRWS:Heise:1994SINUM}.
The weak form of \eqref{eq:Ex2:pde} reads as follows: Find 
$u \in U = V = H_0^1(\Omega) =\mathring{W}_2^1(\Omega)$ 
such that
\begin{equation}
\label{eq:Ex2:WeakForm}
\mathcal{A}(u)(v) = 0 \quad \forall \,v \in V
\end{equation}
that can equivalently be written as non-linear operator equation
\begin{equation}
\label{eq:Ex2:OperatorEquation}
\mathcal{A}(u) = 0 \quad \text{in} \; V^* = U^* = H^{-1}(\Omega),
\end{equation}
where the non-linear operator $\mathcal{A}(u): H_0^1(\Omega) \rightarrow H^{-1}(\Omega)$
is well defined by the variational identity
\begin{equation}
\label{eq:Ex2:OperatorDefinition}
\mathcal{A}(u)(v) := \langle \nu(|\nabla u|)\nabla u,\nabla v \rangle-\langle f,v \rangle 
\quad \forall\, v, u \in H_0^1(\Omega).
\end{equation}
\subsubsection{Mathematical Properties and Fr\'echet Derivative}

Since the non-linear operator is strongly monotone and Lipschitz continuous, 
the operator equation \eqref{eq:Ex2:OperatorEquation} and the equivalent 
weak or variational formulation \eqref{eq:Ex2:WeakForm} have a unique solution 
$u \in H_0^1(\Omega)$;
see, e.g., \cite{Zeidler:1990a,ELRWS:Heise:1994SINUM}.
The differentiability properties of $\nu$ yield the corresponding 
Fr\'{e}chet differentiability properties of the non-linear operator $\mathcal{A}$.
For instance, the first Fr\'{e}chet derivative  of $\mathcal{A}$ at some $u \in U$
is nothing but the bounded, linear operator $\mathcal{A}'(u): U \rightarrow V^*$ 
defined by the variational identity
\begin{equation}
\label{eq:Ex2:FrechetDerivative}
\mathcal{A}'(u)(v,w) := 
\langle (\nu(|\nabla u|) I + \frac{\nu'(|\nabla u|)}{|\nabla u|}\nabla u (\nabla u)^T) \nabla v,\nabla w \rangle
\quad \forall\, v, w \in H_0^1(\Omega).
\end{equation}

\subsubsection{{Goal Functionals and Combined Functional}}
In the following, we are interested in the following quantities of interest:
\begin{itemize}
\item the flux on $\Gamma_{\text{Flux}}:=\{0\}\times (0,3)$: 
	$J_1(u):=\int_{\Gamma_{\text{Flux}}} \nabla u \cdot n \text{ d}s_x,$
\item the point evaluation at $x_0=(0.2,0.2)$: $J_2(u):=u(x_0),$
\item the point evaluation at $x_1=(0.9,0.1)$: $J_3(u):=u(x_1).$
\end{itemize}
We can assume that the functionals are well defined at the solution $u$.
As an error weighting function, we choose the function presented in \eqref{eq:error weighting function:example 1}, i.e.
	\begin{equation*}
	\mathfrak{E}(x,m)=\sum_{i=1}^N \frac{x_i}{|m_i|},
	\end{equation*}
	with $m=(J_1(u_h),J_2(u_h),J_3(u_h))$.
	Then, the combined functional follows from \eqref{eq: TrueErrorrepresentationFunctional}.

\subsubsection{Finite Element Discretization}
The basic finite element discretization is performed by $Q_1$ finite elements, whereas $Q_2$
finite elements are used to construct the enriched spaces. For instance, the finite element 
scheme for solving the primal problem \eqref{eq:Ex2:WeakForm} reads as follows: 
Find $u_h \in U_h \subset U$ such that
\begin{equation}
\label{eq:Ex2:FinitElementScheme}
\mathcal{A}(u_h)(v_h) = 0 \quad \forall \,v_h \in V_h=U_h,
\end{equation}
where $U_h$ is spanned by the $Q_1$ basis functions corresponding to the internal nodes.

\subsubsection{Discussion and Interpretation of Our Findings}
Figure~\ref{fig: Ex2: Init Mesh+ Mesh25} shows the initial mesh with three quantities of interest, 
and the adaptively refined mesh after 25 refinement steps 
when the adaptivity is driven by the combined functional $J_\mathfrak{E}$. 
As expected, we observe heavy mesh refinement where the three functionals are computed.
The 8 interior 
corners  
with a re-entrant angle of {$\frac{3}{2}\pi$} give rise to singularities that pollute the solution 
elsewhere.
However, the mesh refinement around 
these singularities depends on the distance to the places where we compute the functionals.
The strongest refinement is observed in the lower left re-entrant corner, 
whereas the initial mesh is almost not refined in the upper right re-entrant corner that has the largest distance from the place where we evaluate the functionals. 
Figure~\ref{fig: Ex2: Solution+Mesh} shows the finite element solution $u_h$ and the localized error estimator after 25 adaptive refinements of the initial mesh. We observe that $u_h$ is recovered more accurately in regions 
where the accuracy is needed for computing accurate values of the functionals.
This goal-oriented adaptive refinement aiming at the joint accurate computation 
of local functionals is very different from the adaptive mesh refinement driven 
by global values like the 
$H^1$-norm of the discretization error in the residual a posteriori error estimator; 
see, e.g. \cite{Verfuerth:1996a}.
Indeed, in Figure~\ref{fig: Ex2: Solution+MeshL2},
we display the finite element solution $u_h$ and the mesh after 25 adaptive refinements of the initial mesh 
when the adaptive refinement is driven by the $L_2$-norm, i.e., by the functional 
$J(u) = \|u\|^2_{L_2(\Omega)}$ that is a global functional too.
As expected, the refinement is located at the 8 singularities in the 8 re-entrant corners.
In addition to this, Figures~\ref{fig: Ex2: IeffL2} and \ref{fig: Ex2: errorL2} 
illustrates the numerical behavior of the effectivity indices $I_{\mathrm{eff}}$, $I_{\mathrm{eff,a}}$, and $I_{\mathrm{eff,p}}$ for 
$J(u)$
and the decay of the absolute error for the functional $J$ in
the cases of adaptive and uniform refinements, respectively. 
This error decay behaves like 
$O(h^{3/2}) = O(\text{DoFs}^{-3/4})$ for uniform mesh refinement due to the singularities,
whereas $O(\text{DoFs}^{-1})$, 
that is equivalent to $O(h^{2})$ on a uniform mesh,
is observed for the adaptive finite element procedure driven by the functional $J$.

Furthermore, Figure~\ref{fig: Ex2: Ieff} shows the numerical behavior of
the effectivity indices $I_{\mathrm{eff}}$, $I_{\mathrm{eff,a}}$, and $I_{\mathrm{eff,p}}$ 
for the combined functional $J_\mathfrak{E}$.
We see that all three effectivity indices are close to $1$, and $I_{\mathrm{eff}}$ 
is practically almost $1$ for more than $10000$ DoFs.
Figure~\ref{fig: Ex2: errors} provides the error decay of the relative errors 
for $J_1$, $J_2$, $J_3$, and the absolute error for $J_\mathfrak{E}$.
Figures~\ref{fig: Ex2: Error: J1}, \ref{fig: Ex2: Error: J2}, and  \ref{fig: Ex2: Error: J3} 
compare the error decay of the uniform and adaptive refinements 
for the functionals $J_1$, $J_2$, and $J_3$, respectively,
whereas Figure~\ref{fig: Ex2: Error: J?} shows that the error estimator $\eta_h$ is
practically identical with the error for $J_\mathfrak{E}$, and both decay like 
$O((\text{DoFs})^{-1})$ as expected in the adaptive case.

\begin{figure}[H]
	\begin{tikzpicture}
	\definecolor{xdxdff}{rgb}{0.49019607843137253,0.49019607843137253,1}
	\definecolor{xdxdffxkcd}{rgb}{0.8,0.3,0.5}
	\node[anchor=south west,inner sep=0] (image) at (0,0) {\includegraphics[width=0.47\linewidth]{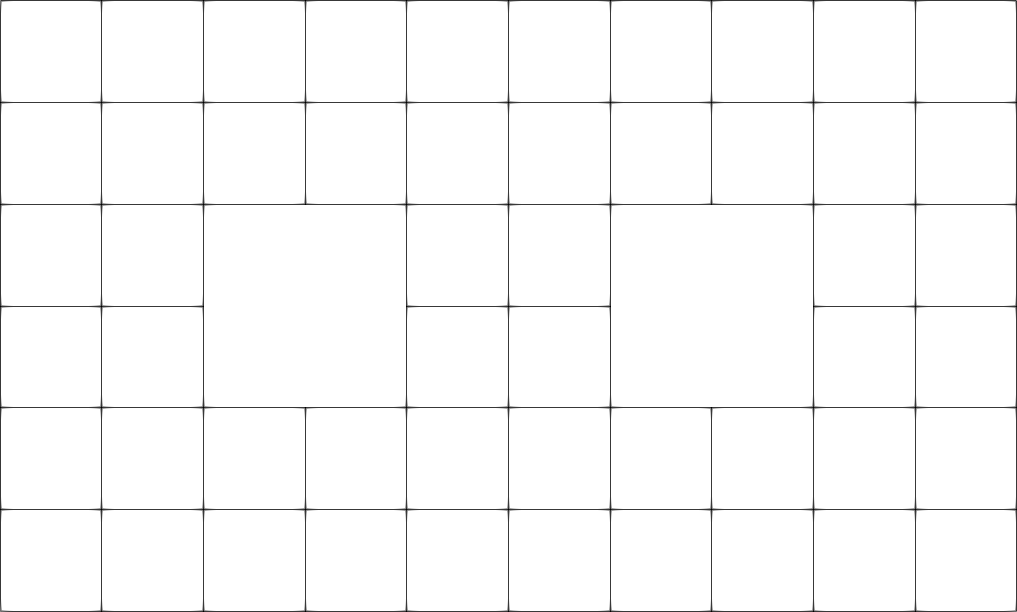}};
	\begin{scope}[x={(image.south east)},y={(image.north west)}]
	\draw [line width=4pt, color=xdxdff,opacity=0.4] (0,0)-- (0,1);
	\draw[color=xdxdff] (0.05,0.55) node {$J_1$};
	\draw [fill=xdxdffxkcd,opacity=0.4] (0.04,0.066666666666666666666666) circle (3.pt);
	\draw[color=xdxdffxkcd,opacity=0.8] (0.078,0.045) node {$J_2$};
	\draw [fill=orange,opacity=0.4] (0.18,0.0333333333333333333333) circle (3.pt);
	\draw[color=orange,opacity=1.0] (0.175,0.11) node {$J_3$};
	\end{scope}
	\end{tikzpicture}\hfill
	\includegraphics[width=0.47\linewidth]{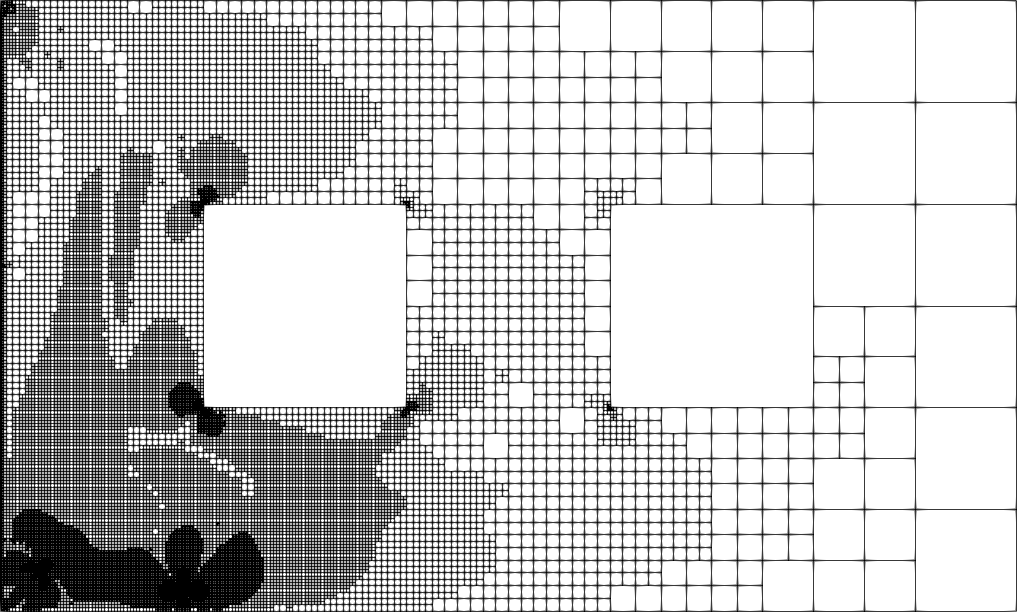}
	\caption{Example 2:  Initial mesh with quantities of interest (left), 
	                 and adaptively refined mesh after 25 refinement steps  {driven by the combined functional $J_\mathfrak{E}$} (right).}  
	\label{fig: Ex2: Init Mesh+ Mesh25}
\end{figure}

\begin{figure}[H]
	\includegraphics[width=0.47\linewidth]{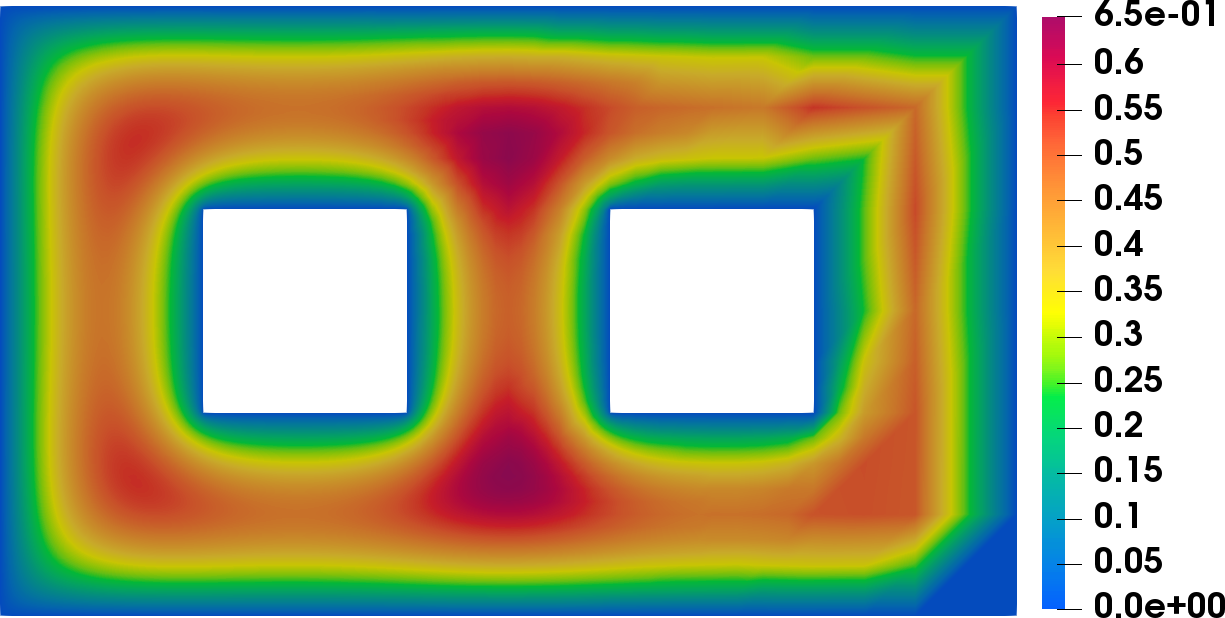} \hfill
	\includegraphics[width=0.47\linewidth]{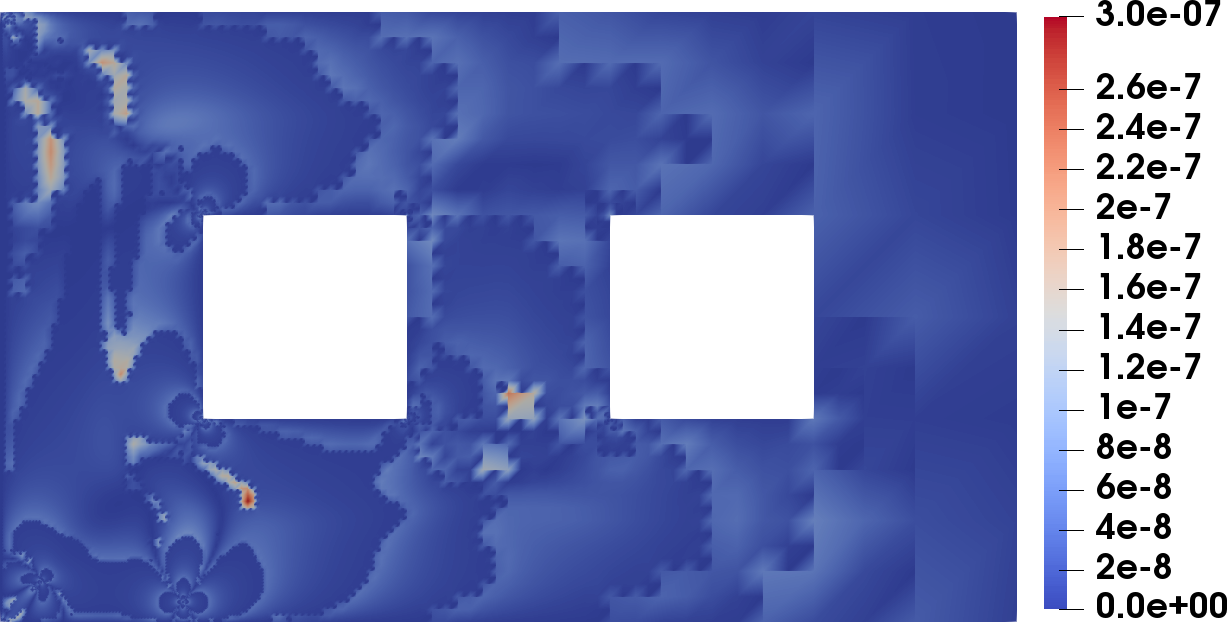}
	\caption{Example 2:  The approximate solution $u_h$ (left), and the localized error estimator after 25 refinement steps (right)  {driven by the combined functional $J_\mathfrak{E}$}.\label{fig: Ex2: Solution+Mesh}}   
	
\end{figure}

\begin{figure}[H]
	\ifMAKEPICS
	\begin{gnuplot}[terminal=epslatex,terminaloptions=color]
		set output "Figures/IeffEx1.tex"
		set title 'Example 2: $I_{\mathrm{eff}}$ for $J_\mathfrak{E}$'
		set key bottom right
		set key opaque
		set logscale x
		set datafile separator "|"
		set grid ytics lc rgb "#bbbbbb" lw 1 lt 0
		set grid xtics lc rgb "#bbbbbb" lw 1 lt 0
		set xlabel '\text{DoFs}'
		set format '
		plot \
		'< sqlite3 Compdata/Nonlinear-Coercive/nonlinearcoercivedata.db "SELECT DISTINCT DOFS_primal, abs(Ieff) from data "' u 1:2 w  lp lw 3 title ' \footnotesize $I_{\mathrm{eff}}$', \
		'< sqlite3 Compdata/Nonlinear-Coercive/nonlinearcoercivedata.db "SELECT DISTINCT DOFS_primal, abs(Ieff_adjoint) from data "' u 1:2 w  lp  lw 3 title ' \footnotesize $I_{\mathrm{eff,a}}$', \
		'< sqlite3 Compdata/Nonlinear-Coercive/nonlinearcoercivedata.db "SELECT DISTINCT DOFS_primal, abs(Ieff_primal) from data "' u 1:2 w  lp  lw 3 title ' \footnotesize $I_{\mathrm{eff,p}}$', \
		1   lw  10											
		#					 '< sqlite3 Data/Multigoalp4/Higher_Order/dataHigherOrderJE.db "SELECT DISTINCT DOFs, abs(Exact_Error) from data "' u 1:2 w  lp lw 3 title ' \footnotesize Error in $J_\mathfrak{E}$', \
	\end{gnuplot}
	\fi
	
	
	\ifMAKEPICS
	\begin{gnuplot}[terminal=epslatex,terminaloptions=color]
		set output "Figures/ErrorEx1.tex"
		set title 'Example 2: Errors in the functionals'
		set key bottom left
		set key opaque
		set logscale y
		set logscale x
		set datafile separator "|"
		set grid ytics lc rgb "#bbbbbb" lw 1 lt 0
		set grid xtics lc rgb "#bbbbbb" lw 1 lt 0
		set xlabel '\text{DoFs}'
		set format '
		plot \
		'< sqlite3 Compdata/Nonlinear-Coercive/nonlinearcoercivedata.db "SELECT DISTINCT DOFS_primal, abs(relativeError0) from data "' u 1:2 w  lp lw 3 title ' \footnotesize $J_1$', \
		'< sqlite3 Compdata/Nonlinear-Coercive/nonlinearcoercivedata.db "SELECT DISTINCT DOFS_primal, abs(relativeError1) from data "' u 1:2 w  lp  lw 3 title ' \footnotesize $J_2$', \
		'< sqlite3 Compdata/Nonlinear-Coercive/nonlinearcoercivedata.db "SELECT DISTINCT DOFS_primal, abs(relativeError2) from data "' u 1:2 w  lp  lw 3 title ' \footnotesize $J_3$', \
		'< sqlite3 Compdata/Nonlinear-Coercive/nonlinearcoercivedata.db "SELECT DISTINCT DOFS_primal, abs(Exact_Error) from data "' u 1:2 w  lp  lw 3 title ' \footnotesize $J_\mathfrak{E}$', \
		1/x   lw  10 title ' \footnotesize $O(\text{DoFs}^{-1})$'
		#'< sqlite3 Compdata/dataEx1/dataEx1uniform.db "SELECT DISTINCT DOFS_primal, abs(relativeError0) from data_global "' u 1:2 w  lp lw 3 title ' \footnotesize $uJ_1$', \
		'< sqlite3 Compdata/dataEx1/dataEx1uniform.db "SELECT DISTINCT DOFS_primal, abs(relativeError1) from data_global "' u 1:2 w  lp  lw 3 title ' \footnotesize $uJ_2$', \
		'< sqlite3 Compdata/dataEx1/dataEx1uniform.db "SELECT DISTINCT DOFS_primal, abs(relativeError2) from data_global "' u 1:2 w  lp  lw 3 title ' \footnotesize $uJ_3$', \
		'< sqlite3 Compdata/dataEx1/dataEx1uniform.db "SELECT DISTINCT DOFS_primal, abs(Exact_Error) from data_global "' u 1:2 w  lp  lw 3 title ' \footnotesize $uJ_\mathfrak{E}$', \
		
		#					 '< sqlite3 Data/Multigoalp4/Higher_Order/dataHigherOrderJE.db "SELECT DISTINCT DOFs, abs(Exact_Error) from data "' u 1:2 w  lp lw 3 title ' \footnotesize Error in $J_\mathfrak{E}$', \
	\end{gnuplot}
	\fi
	{	\begin{minipage}{0.47\linewidth}
			\scalebox{0.60}{\input{Figures/IeffEx12.tex}}
			\caption{Example 2: 
			Effectivity indices $I_{\mathrm{eff}}$, $I_{\mathrm{eff,a}}$, and $I_{\mathrm{eff,p}}$ 
			for $J_\mathfrak{E}$.
			\label{fig: Ex2: Ieff}}
		\end{minipage}\hfill
		\begin{minipage}{0.47\linewidth}
			\scalebox{0.60}{\input{Figures/ErrorEx12.tex}}
			\caption{Example 2: Relative errors for $J_1$, $J_2$, $J_3$, and absolute error for $J_\mathfrak{E}$. \label{fig: Ex2: errors}}
		\end{minipage}	
	}	
\end{figure}

\begin{figure}[H]
	\ifMAKEPICS
\begin{gnuplot}[terminal=epslatex,terminaloptions=color]
	set output "Figures/ErrorEx1J1.tex"
	set title 'Example 2: Errors in $J_1$'
	set key bottom left
	set key opaque
	set logscale y
	set logscale x
	set datafile separator "|"
	set grid ytics lc rgb "#bbbbbb" lw 1 lt 0
	set grid xtics lc rgb "#bbbbbb" lw 1 lt 0
	set xlabel '\text{DoFs}'
	set format '
	plot \
	'< sqlite3 Compdata/Nonlinear-Coercive/nonlinearcoercivedata.db "SELECT DISTINCT DOFS_primal, abs(relativeError0) from data "' u 1:2 w  lp lw 3 title ' \footnotesize $J_1$ adaptive', \
	'< sqlite3 Compdata/Nonlinear-Coercive/NonlinearCoerciveuniform.db "SELECT DISTINCT DOFS_primal, abs(relativeError0) from data_global "' u 1:2 w  lp  lw 3 title ' \footnotesize $J_1$ uniform', \
	10/x   lw  10 title ' \footnotesize $O(\text{DoFs}^{-1})$', \
	1/(x**0.5)   lw  10 title ' \footnotesize $O(\text{DoFs}^{-\frac{1}{2}})$'	
	#					 '< sqlite3 Data/Multigoalp4/Higher_Order/dataHigherOrderJE.db "SELECT DISTINCT DOFs, abs(Exact_Error) from data "' u 1:2 w  lp lw 3 title ' \footnotesize Error in $J_\mathfrak{E}$', \
\end{gnuplot}
	\fi
	
	
	\ifMAKEPICS
	\begin{gnuplot}[terminal=epslatex,terminaloptions=color]
		set output "Figures/ErrorEx1J2.tex"
		set title 'Example 2: Errors in $J_2$'
		set key bottom left
		set key opaque
		set logscale y
		set logscale x
		set datafile separator "|"
		set grid ytics lc rgb "#bbbbbb" lw 1 lt 0
		set grid xtics lc rgb "#bbbbbb" lw 1 lt 0
		set xlabel '\text{DoFs}'
		set format '
		plot \
		'< sqlite3 Compdata/Nonlinear-Coercive/nonlinearcoercivedata.db "SELECT DISTINCT DOFS_primal, abs(relativeError1) from data "' u 1:2 w  lp lw 3 title ' \footnotesize $J_2$ adaptive', \
		'< sqlite3 Compdata/Nonlinear-Coercive/NonlinearCoerciveuniform.db "SELECT DISTINCT DOFS_primal, abs(relativeError1) from data_global "' u 1:2 w  lp  lw 3 title ' \footnotesize $J_2$ uniform', \
		1/x   lw  10 title ' \footnotesize $O(\text{DoFs}^{-1})$', \
		#					 '< sqlite3 Data/Multigoalp4/Higher_Order/dataHigherOrderJE.db "SELECT DISTINCT DOFs, abs(Exact_Error) from data "' u 1:2 w  lp lw 3 title ' \footnotesize Error in $J_\mathfrak{E}$', \
	\end{gnuplot}
	\fi
	{	\begin{minipage}{0.47\linewidth}
			\scalebox{0.60}{\input{Figures/ErrorEx1J12.tex}}
			\caption{Example 2: Relative errors for adaptive and uniform refinement for $J_1$. \label{fig: Ex2: Error: J1}}
		\end{minipage}\hfill
		\begin{minipage}{0.47\linewidth}
			\scalebox{0.60}{\input{Figures/ErrorEx1J22.tex}}
			\caption{Example 2: Relative errors for adaptive and uniform refinement for $J_2$. \label{fig: Ex2: Error: J2}}
		\end{minipage}	
	}		
\end{figure}

\begin{figure}[H]
	\ifMAKEPICS
	\begin{gnuplot}[terminal=epslatex,terminaloptions=color]
		set output "Figures/ErrorEx1J3.tex"
		set title 'Example 2: Errors in $J_3$'
		set key bottom left
		set key opaque
		set logscale y
		set logscale x
		set datafile separator "|"
		set grid ytics lc rgb "#bbbbbb" lw 1 lt 0
		set grid xtics lc rgb "#bbbbbb" lw 1 lt 0
		set xlabel '\text{DOFs}'
		set format '
		plot \
		'< sqlite3 Compdata/Nonlinear-Coercive/nonlinearcoercivedata.db "SELECT DISTINCT DOFS_primal, abs(relativeError2) from data "' u 1:2 w  lp lw 3 title ' \footnotesize $J_3$ adaptive', \
		'< sqlite3 Compdata/Nonlinear-Coercive/NonlinearCoerciveuniform.db "SELECT DISTINCT DOFS_primal, abs(relativeError2) from data_global "' u 1:2 w  lp  lw 3 title ' \footnotesize $J_3$ uniform', \
		10/x   lw  10 title ' \footnotesize $O(\text{DoFs}^{-1})$'
		#1/(x**0.5)   lw  10 title ' \footnotesize $O(\text{DoFs}^{-\frac{1}{2}})$'	
		#					 '< sqlite3 Data/Multigoalp4/Higher_Order/dataHigherOrderJE.db "SELECT DISTINCT DOFs, abs(Exact_Error) from data "' u 1:2 w  lp lw 3 title ' \footnotesize Error in $J_\mathfrak{E}$', \
	\end{gnuplot}
	\fi
	
	
	\ifMAKEPICS
	\begin{gnuplot}[terminal=epslatex,terminaloptions=color]
		set output "Figures/ErrorEx1estimator1.tex"
		set title 'Example 2: Errors in $J_\mathfrak{E}$ and error estimator $\eta_h$'
		set key bottom left
		set key opaque
		set logscale y
		set logscale x
		set datafile separator "|"
		set grid ytics lc rgb "#bbbbbb" lw 1 lt 0
		set grid xtics lc rgb "#bbbbbb" lw 1 lt 0
		set xlabel '\text{DOFs}'
		set format '
		plot \
		'< sqlite3 Compdata/Nonlinear-Coercive/nonlinearcoercivedata.db "SELECT DISTINCT DOFS_primal, abs(Exact_Error) from data "' u 1:2 w  lp lw 3 title ' \footnotesize $J_\mathfrak{E}$', \
		'< sqlite3 Compdata/Nonlinear-Coercive/nonlinearcoercivedata.db "SELECT DISTINCT DOFS_primal, abs(Estimated_Error) from data "' u 1:2 w  lp  lw 3 title ' \footnotesize $\eta_h$', \
		100/x   lw  10 title ' \footnotesize $O(\text{DoFs}^{-1})$', \
		#					 '< sqlite3 Data/Multigoalp4/Higher_Order/dataHigherOrderJE.db "SELECT DISTINCT DOFs, abs(Exact_Error) from data "' u 1:2 w  lp lw 3 title ' \footnotesize Error in $J_\mathfrak{E}$', \
	\end{gnuplot}
	\fi
	{	\begin{minipage}{0.47\linewidth}
			\scalebox{0.60}{\input{Figures/ErrorEx1J32.tex}}
			\caption{Example 2: Relative errors for adaptive and uniform refinement for $J_3$. \label{fig: Ex2: Error: J3}}
		\end{minipage}\hfill
		\begin{minipage}{0.47\linewidth}
			\scalebox{0.60}{\input{Figures/ErrorEx1Estimator12.tex}}
			\caption{Example 2: Error for $J_\mathfrak{E}$ and error estimator $\eta_h$. \label{fig: Ex2: Error: J?}}
		\end{minipage}	
	}	
	
\end{figure}

	\begin{figure}[H]
		\includegraphics[width=0.52\linewidth]{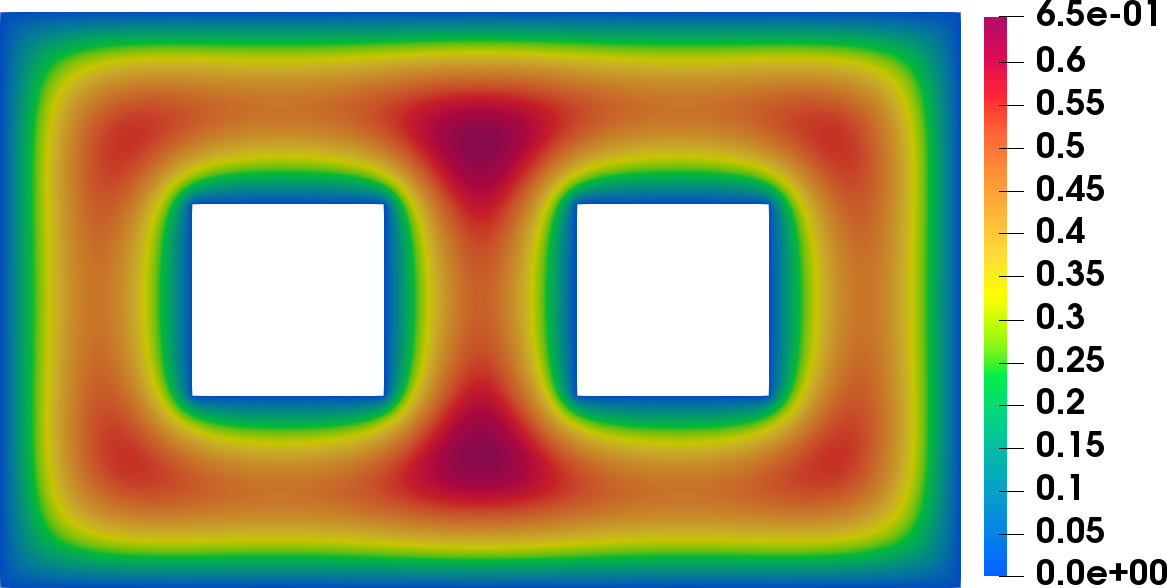} \hfill
		\includegraphics[width=0.43\linewidth]{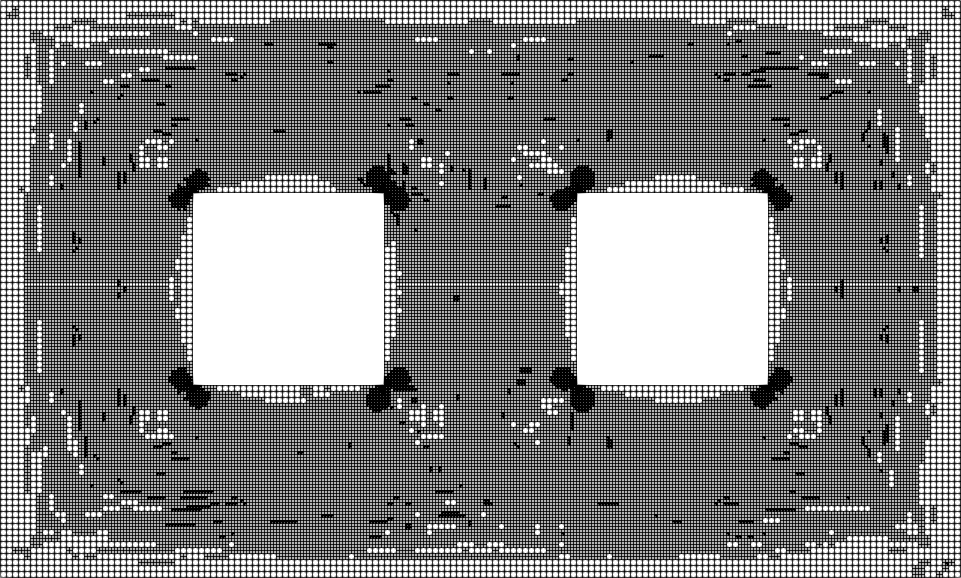}
		\caption{Example 2:  The approximate solution $u_h$ (left), 
		and adaptively refined mesh after {25} refinement steps (right)
		for controlling the $L_2$-norm by means of the functional $J(u) = \|u\|^2_{L_2(\Omega)}$.
		\label{fig: Ex2: Solution+MeshL2}}  		
	\end{figure}

\begin{figure}[H]
	\ifMAKEPICS
	\begin{gnuplot}[terminal=epslatex,terminaloptions=color]
		set output "Figures/IeffEx1L2Norm.tex"
		set title 'Example 2: $I_{\mathrm{eff}}$ for $J$'
			set key bottom left
			set key opaque
			set logscale x
			set datafile separator "|"
			set grid ytics lc rgb "#bbbbbb" lw 1 lt 0
			set grid xtics lc rgb "#bbbbbb" lw 1 lt 0
			set xlabel '\text{DoFs}'
			set format '
			plot \
			'< sqlite3 Compdata/Nonlinear-Coercive/L2data.db "SELECT DISTINCT DOFS_primal, abs(Ieff) from data "' u 1:2 w  lp lw 3 title ' \footnotesize $I_{\mathrm{eff}}$', \
			'< sqlite3 Compdata/Nonlinear-Coercive/L2data.db "SELECT DISTINCT DOFS_primal, abs(Ieff_adjoint) from data "' u 1:2 w  lp  lw 3 title ' \footnotesize $I_{\mathrm{eff,a}}$', \
			'< sqlite3 Compdata/Nonlinear-Coercive/L2data.db "SELECT DISTINCT DOFS_primal, abs(Ieff_primal) from data "' u 1:2 w  lp  lw 3 title ' \footnotesize $I_{\mathrm{eff,p}}$', \
						1   lw  10											
						#					 '< sqlite3 Data/Multigoalp4/Higher_Order/dataHigherOrderJE.db "SELECT DISTINCT DOFs, abs(Exact_Error) from data "' u 1:2 w  lp lw 3 title ' \footnotesize Error in $J_\mathfrak{E}$', \
					\end{gnuplot}
					\fi
	
						\ifMAKEPICS
					\begin{gnuplot}[terminal=epslatex,terminaloptions=color]
						set output "Figures/ErrorEx1L2Norm.tex"
						set title 'Example 2: Error in $J$'
						set key bottom left
						set key opaque
						set logscale y
						set logscale x
						set datafile separator "|"
						set grid ytics lc rgb "#bbbbbb" lw 1 lt 0
						set grid xtics lc rgb "#bbbbbb" lw 1 lt 0
						set xlabel '\text{DOFs}'
						set format '
						plot \
						'< sqlite3 Compdata/Nonlinear-Coercive/L2data.db "SELECT DISTINCT DOFS_primal, abs(Exact_Error) from data "' u 1:2 w  lp lw 3 title ' \footnotesize $J$ adaptive', \
						'< sqlite3 Compdata/Nonlinear-Coercive/L2datauniform.db "SELECT DISTINCT DOFS_primal, abs(Exact_Error) from data_global "' u 1:2 w  lp  lw 3 title ' \footnotesize $J$ uniform', \
						40/x   lw  10 title ' \footnotesize $O(\text{DoFs}^{-1})$', \
						15/(x**0.75)   lw  10 title ' \footnotesize $O(\text{DoFs}^{-\frac{3}{4}})$', \
						#					 '< sqlite3 Data/Multigoalp4/Higher_Order/dataHigherOrderJE.db "SELECT DISTINCT DOFs, abs(Exact_Error) from data "' u 1:2 w  lp lw 3 title ' \footnotesize Error in $J_\mathfrak{E}$', \
					\end{gnuplot}
					\fi
					{	\begin{minipage}{0.47\linewidth}
							\scalebox{0.60}{\input{Figures/IeffEx1L2Norm2.tex}}
							\caption{Example 2: 
							Effectivity indices $I_{\mathrm{eff}}$, $I_{\mathrm{eff,a}}$, and $I_{\mathrm{eff,p}}$ for $J$.
							\label{fig: Ex2: IeffL2}}
						\end{minipage}\hfill
						\begin{minipage}{0.47\linewidth}
							\scalebox{0.60}{\input{Figures/ErrorEx1L2Norm2.tex}}
							\caption{Example 2: Absolute  error for $J$
							in the cases of adaptive and uniform refinements.
							\label{fig: Ex2: errorL2}}
						\end{minipage}	
					}	
				\end{figure}

\newpage
\subsection{Stationary Incompressible Navier-Stokes Problem}
\label{Subsec: Example3}
In this third example, we consider the flow around a 
cylinder benchmark from \cite{TurSchabenchmark1996}. 
We notice that this example is an extension of our prior study \cite{EnLaWi20}.

\subsubsection{Domain}
The domain $\Omega:=(0,L)\times (0,H)\setminus \mathcal{B}$ where $L=2.2$, $H=0.41$ and $\mathcal{B}:=\{x \in \mathbb{R}:|x-(0.2,0.2)|<0.05\}$. The domain as well as the boundary conditions are depicted in Figure~\ref{fig: Ex3: Omega+Boundary}.

\begin{figure}[H]
	\begin{tikzpicture}[line cap=round,line join=round,>=triangle 45,x=6cm,y=6cm]
	\clip(-0.4,-0.2) rectangle (2.6,0.6);
	\draw [line width=2pt,color=cyan] (2.205,0.41)-- (2.2,0);
	\draw [line width=2pt,color=black] (0,0.41)-- (2.2,0.41);	
	\draw [line width=2pt,color=black] (2.2,0)-- (0,0);
	\draw [line width=2pt,color=orange] (0,0)-- (0,0.41);
	\draw [line width=2pt,color=blue] (0.2,0.2) circle (0.05);
	\begin{scriptsize}
	\draw[color=orange] (-0.15,0.2) node {\Large $\Gamma_{\mathrm{inflow}}$};
	\draw[color=black] (1.1,0.46) node {\Large$\Gamma_{0}$};
	\draw[color=cyan] (2.35,0.2) node {\Large$\Gamma_{\mathrm{outflow}}$};
	\draw[color=black] (1.1,-0.06) node {\Large$\Gamma_{0}$};
	\draw[color=blue] (0.2,0.3) node {\Large$\partial \mathcal{B}$};
	\draw[color=magenta] (0.3,0.2) node {\Large$x_2$};
	\draw[color=magenta] (0.1,0.2) node {\Large$x_1$};
	\draw[color=magenta] (2.13,0.205) node {\Large$x_3$};
	\draw [fill=magenta] (0.15,0.2) circle (2.5pt);
	\draw [fill=magenta] (0.25,0.2) circle (2.5pt);
	\draw [fill=magenta] (2.2,0.205) circle (2.5pt);
	\end{scriptsize}
	\end{tikzpicture}	
	\caption{Section \ref{Subsec: Example3}: Flow around a cylinder. The domain $\Omega$ with boundary parts and visualization of $x_1$, $x_2$ and $x_3$. \label{fig: Ex3: Omega+Boundary}}
\end{figure}
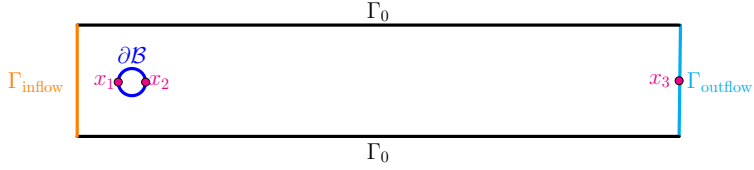

\subsubsection{Equations in Strong Form and Boundary Conditions}
Find $u:=(v,p)$ such that
\begin{align*}
 - \nabla\cdot\left (\nu \nabla v\right)+(v \cdot \nabla) v + \nabla p  =& 0   \qquad\text{in }\Omega,\nonumber\\ 
\nabla \cdot v =&  0   \qquad\text{in }\Omega,\nonumber\\
\end{align*}
with $\nu =10^{-3}$.
Furthermore, we consider the following boundary conditions	
\begin{equation}
\begin{aligned}
v=&0   \qquad\text{on } \Gamma_{\mathrm{no-slip}}:=\partial \mathcal{B} \cup \Gamma_{0}, \\
v=&\hat{v}  \qquad \text{on } \Gamma_{\mathrm{inflow}}, \nonumber\\
\nu \frac{\partial v}{\partial \vec{n}} - p \cdot \vec{n }=& 0  
\qquad\text{on } \Gamma_{\mathrm{outflow}}, \nonumber\\
\end{aligned}
\end{equation}
where $\Gamma_{\mathrm{inflow}}:=\{0\}\times (0,H)$, $\Gamma_{\mathrm{outflow}}:=\{L\}\times (0,H)$, and $\Gamma_{0}:=(0,L)\times\{0,H\}$.
The inflow profile $\hat{v}$ is $\hat{v}(x_1,x_2):=1.2x_2(H-x_2)/H^2$.
Additionally, we mention that the pressure is uniquely determined 
due to the do-nothing condition on $\Gamma_{\mathrm{outflow}}$; see \cite{HeRaTu96}.

\subsubsection{Weak Formulation}
The weak form of the problem is given by: Find $u:=(v,p) \in (\hat{v},0) + U:=H^1_{\Gamma_\mathrm{no-slip}\cup\Gamma_{\mathrm{inflow}}}(\Omega) \times L^2(\Omega)$ such that
\begin{equation}
	\mathcal{A}(u)(\psi):=(\nu \nabla v, \nabla \psi_v)+(\nabla v \cdot v, \psi_v)+(p, \operatorname{div}(\psi_v))+(\operatorname{div}(u),\psi_p)
\end{equation}
for all test functions $(\psi_v,\psi_p)=:\psi \in V=U$,
where $H^1_{\Gamma_\mathrm{no-slip}\cup\Gamma_{\mathrm{inflow}}}(\Omega):=\{v \in H^1(\Omega): u =0 \text{ on }\Gamma_\mathrm{no-slip}\cup\Gamma_{\mathrm{inflow}} \}$.

\subsubsection{Finite Element Discretization}
The domain is decomposed into quadrilateral elements; cf. Figure~\ref{fig: Ex3: Mesh0+9} (left). As discretization, we use $Q_2^d$ finite elements for the velocity $v$ and $Q_1$ finite elements for the pressure $p$. For the enriched space we use $Q_4^d$ finite elements for the velocity $v$ and $Q_2$ finite elements for the pressure $p$. This discretizations are inf-sup stable as shown in \cite{zulehner2022short}.

\subsubsection{Goal Functionals and Combined Functional}
We are interested in the following functionals of interest:
\begin{itemize}
	\item the pressure difference between $x_1=(0.15,0.2)$ and $x_2=(0.25,0.2)$: $$J_1(u):=p(x_1)-p(x_2),$$
	\item the drag at the ball $\mathcal{B}$: 
	$$J_2({u}):= 500\int_{\partial \mathcal{B}} \left[\nu \frac{\partial u}{\partial \vec{n}} - p \vec{n }\right]\cdot \vec{e}_1\,\mathrm{ d}s_{(x)},$$
	\item the lift at the ball $\mathcal{B}$: 
	$$J_3({u}):= 500\int_{\partial \mathcal{B}} \left[\nu \frac{\partial u}{\partial \vec{n}} - p \vec{n }\right]\cdot \vec{e}_2\,\mathrm{ d}s_{(x)},$$
	\item the square of the magnitude of velocity at $x_3=(2.2,0.205)$: $$J_4(u):=|v|^2(x_3),$$
\end{itemize}
where $\vec{e}_1 = (1,0)$ and $\vec{e}_2 = (0,1)$.
For $J_1$, $J_2$, $J_3$, we use the reference values from \cite{nabh1998high}, and for $J_4$ we used uniform $p$-refinement on an adaptively refined grid.
The reference values are :
\begin{itemize}
	\item $J_1(u)=0.11752016697$,
	\item $J_2(u)=5.57953523384$,
	\item $J_3(u)=0.010618948146$,
	\item $J_4(u)=0.088364291405$.
\end{itemize}
As in the previous example, we choose the error weighting function \eqref{eq:error weighting function:example 1}, i.e.
	\begin{equation*}
	\mathfrak{E}(x,m)=\sum_{i=1}^N \frac{x_i}{|m_i|},
	\end{equation*}
	with $m=(J_1(u_h),J_2(u_h),J_3(u_h),J_4(u_h))$.

\subsubsection{Discussion and Interpretation of Our Findings}
In Figure~\ref{fig: Ex3: errors}, we can see that the error functional $J_\mathfrak{E}$ dominates the relative error in the single functionals. At the beginning of the refinement procedure, the error in $J_4$ is the smallest of all the functionals. However, 
it requires different local refinement needs than the other functionals. Therefore, the error in $J_4$ is not reduced in the first $10$ refinement steps. In fact, the elements around the point $x_3$ are not refined a single time during the first $9$ refinement steps, cf. Figure~\ref{fig: Ex3: Mesh0+9}. In Figure~\ref{fig: Ex3: Mesh11+17}, there is already refinement in the area around $x_3$ but not at $x_3$ after $11$ adaptive refinements, but a lot at $x_3$ after $17$ adaptive refinements. We also observe that the error of $J_4$ is reduced in those refinement steps, cf. Figure~\ref{fig: Ex3: errors} and Figure~\ref{fig: Ex3: Error: J4}. The effectivity index  $I_\mathrm{eff}$ in Figure~\ref{fig: Ex3: Ieff} is between 0.5 and 2.5. The adjoint and primal effectivity indices have a similar value. 
In Figures~\ref{fig: Ex3: Error J1}~-~\ref{fig: Ex3: Error: J4}, we observe that adaptive refinement has a smaller error than uniform refinement after $30\, 000$ DoFs. Additionally, the convergence rate is better for adaptive refinement for all $4$ functionals. 
At $100\, 000$ DoFs, all errors are approximately one order of magnitude lower for adaptive refinement than for uniform refinement.
\begin{figure}[H]
\includegraphics[width=0.47\linewidth]{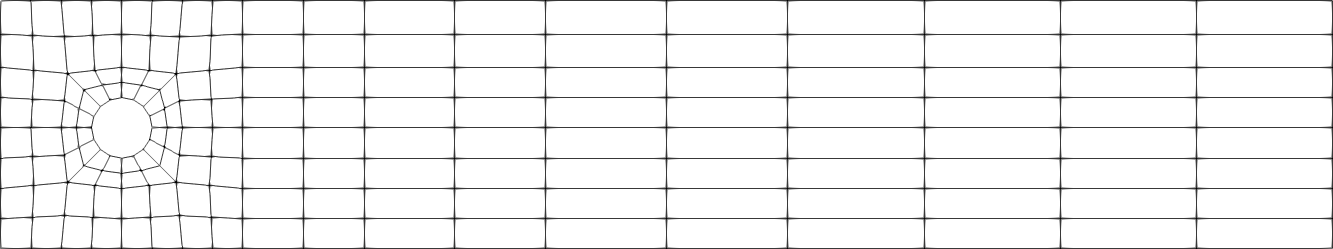} \hfill
\includegraphics[width=0.47\linewidth]{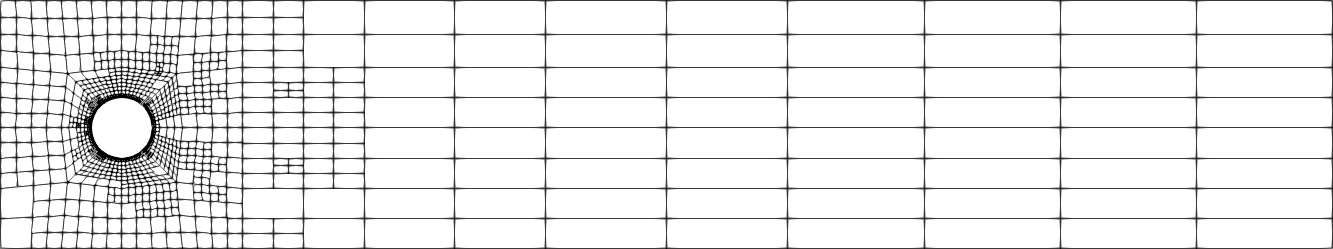}
\caption{Example 3: Initial mesh (left) and adaptively refined mesh after 9 refinements (right). \label{fig: Ex3: Mesh0+9}}
\end{figure}

\begin{figure}[H]
	\includegraphics[width=0.47\linewidth]{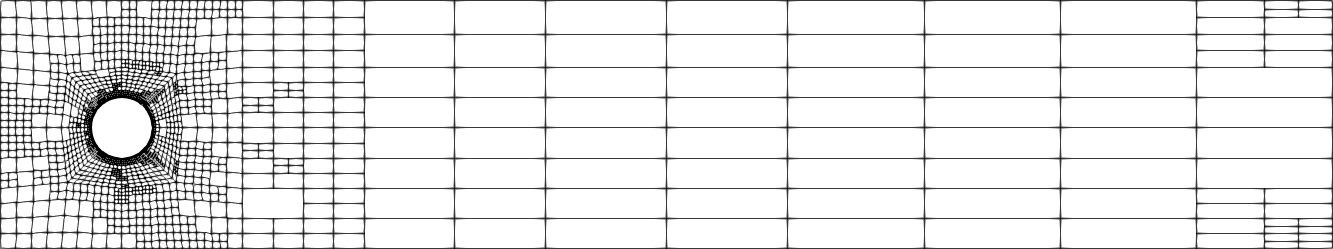} \hfill
	\includegraphics[width=0.47\linewidth]{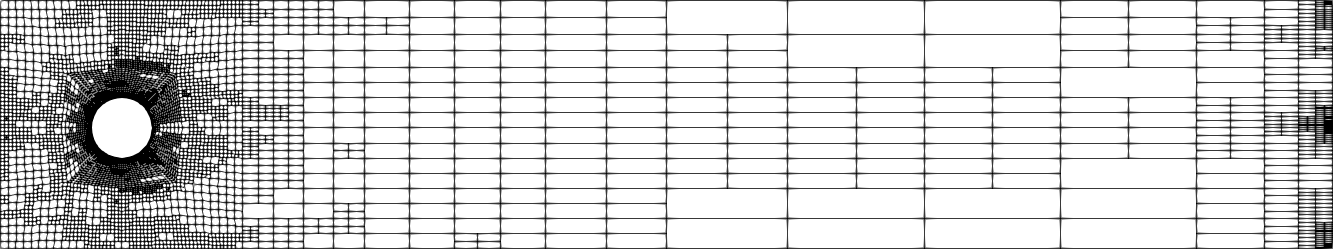}
	\caption{Example 3: Adaptively refined mesh after 11 refinements (left) and 17 refinements (right). \label{fig: Ex3: Mesh11+17}}
\end{figure}

\begin{figure}[H]
	\ifMAKEPICS
	\begin{gnuplot}[terminal=epslatex,terminaloptions=color]
		set output "Figures/IeffEx2.tex"
		set title 'Example 3: $I_{\mathrm{eff}}$'
		set key bottom right
		set key opaque
		set logscale x
		set datafile separator "|"
		set grid ytics lc rgb "#bbbbbb" lw 1 lt 0
		set grid xtics lc rgb "#bbbbbb" lw 1 lt 0
		set xlabel '$\mathrm{DoFs}$'
		set format '
		plot \
		'< sqlite3 Compdata/Navier-Stokes/NSEadaptiv.db "SELECT DISTINCT DOFS_primal, abs(Ieff) from data "' u 1:2 w  lp lw 3 title ' \footnotesize $I_{\mathrm{eff}}$', \
		'< sqlite3 Compdata/Navier-Stokes/NSEadaptiv.db "SELECT DISTINCT DOFS_primal, abs(Ieff_adjoint) from data "' u 1:2 w  lp  lw 3 title ' \footnotesize $I_{\mathrm{eff,a}}$', \
		'< sqlite3 Compdata/Navier-Stokes/NSEadaptiv.db "SELECT DISTINCT DOFS_primal, abs(Ieff_primal) from data "' u 1:2 w  lp  lw 3 title ' \footnotesize $I_{\mathrm{eff,p}}$', \
		1   lw  10											
		#					 '< sqlite3 Data/Multigoalp4/Higher_Order/dataHigherOrderJE.db "SELECT DISTINCT DOFs, abs(Exact_Error) from data "' u 1:2 w  lp lw 3 title ' \footnotesize Error in $J_\mathfrak{E}$', \
	\end{gnuplot}
	\fi
	
	
	\ifMAKEPICS
	\begin{gnuplot}[terminal=epslatex,terminaloptions=color]
		set output "Figures/ErrorEx2.tex"
		set title 'Example 3: Errors in the functionals'
		set key top right
		set key opaque
		set logscale y
		set logscale x
		set datafile separator "|"
		set grid ytics lc rgb "#bbbbbb" lw 1 lt 0
		set grid xtics lc rgb "#bbbbbb" lw 1 lt 0
		set xlabel '$\mathrm{DoFs}$'
		set format '
		plot \
		'< sqlite3 Compdata/Navier-Stokes/NSEadaptiv.db "SELECT DISTINCT DOFS_primal, abs(relativeError0) from data "' u 1:2 w  lp lw 3 title ' \footnotesize $J_1$', \
		'< sqlite3 Compdata/Navier-Stokes/NSEadaptiv.db "SELECT DISTINCT DOFS_primal, abs(relativeError1) from data "' u 1:2 w  lp  lw 3 title ' \footnotesize $J_2$', \
		'< sqlite3 Compdata/Navier-Stokes/NSEadaptiv.db "SELECT DISTINCT DOFS_primal, abs(relativeError2) from data "' u 1:2 w  lp  lw 3 title ' \footnotesize $J_3$', \
		'< sqlite3 Compdata/Navier-Stokes/NSEadaptiv.db "SELECT DISTINCT DOFS_primal, abs(relativeError3) from data "' u 1:2 w  lp  lw 3 title ' \footnotesize $J_4$', \
		'< sqlite3 Compdata/Navier-Stokes/NSEadaptiv.db "SELECT DISTINCT DOFS_primal, abs(Exact_Error) from data "' u 1:2 w  lp  lw 3 title ' \footnotesize $J_\mathfrak{E}$', \
		'< sqlite3 Compdata/Navier-Stokes/NSEadaptiv.db "SELECT DISTINCT DOFS_primal, abs(Estimated_Error) from data "' u 1:2 w  lp  lw 3 lt 17 title ' \footnotesize $\eta_h$', \
		1/x   lw  10 title ' \footnotesize $O(\mathrm{DoFs}^{-1})$'
		#'< sqlite3 Compdata/dataEx1/dataEx1uniform.db "SELECT DISTINCT DOFS_primal, abs(relativeError0) from data_global "' u 1:2 w  lp lw 3 title ' \footnotesize $uJ_1$', \
		'< sqlite3 Compdata/dataEx1/dataEx1uniform.db "SELECT DISTINCT DOFS_primal, abs(relativeError1) from data_global "' u 1:2 w  lp  lw 3 title ' \footnotesize $uJ_2$', \
		'< sqlite3 Compdata/dataEx1/dataEx1uniform.db "SELECT DISTINCT DOFS_primal, abs(relativeError2) from data_global "' u 1:2 w  lp  lw 3 title ' \footnotesize $uJ_3$', \
		'< sqlite3 Compdata/dataEx1/dataEx1uniform.db "SELECT DISTINCT DOFS_primal, abs(Exact_Error) from data_global "' u 1:2 w  lp  lw 3 title ' \footnotesize $uJ_\mathfrak{E}$', \
		
		#					 '< sqlite3 Data/Multigoalp4/Higher_Order/dataHigherOrderJE.db "SELECT DISTINCT DOFs, abs(Exact_Error) from data "' u 1:2 w  lp lw 3 title ' \footnotesize Error in $J_\mathfrak{E}$', \
	\end{gnuplot}
	\fi
	{	\begin{minipage}{0.47\linewidth}
			\scalebox{0.60}{\input{Figures/IeffEx22.tex}} 
			\caption{Example 3: Effectivity index for $J_\mathfrak{E}$. \label{fig: Ex3: Ieff}}
		\end{minipage}\hfill
		\begin{minipage}{0.47\linewidth}
			\scalebox{0.60}{\input{Figures/ErrorEx22.tex}}
			\caption{Example 3: Relative errors for $J_1$, $J_2$, $J_3$, $J_4$ and absolute error for $J_\mathfrak{E}$. \label{fig: Ex3: errors}}
		\end{minipage}	
	}	
\end{figure}

\begin{figure}[H]
	\ifMAKEPICS
	\begin{gnuplot}[terminal=epslatex,terminaloptions=color]
		set output "Figures/ErrorEx2J1.tex"
		set title 'Example 3: Errors in $J_1$'
		set key bottom left
		set key opaque
		set logscale y
		set logscale x
		set datafile separator "|"
		set grid ytics lc rgb "#bbbbbb" lw 1 lt 0
		set grid xtics lc rgb "#bbbbbb" lw 1 lt 0
		set xlabel '$\mathrm{DoFs}$'
		set format '
		plot \
		'< sqlite3 Compdata/Navier-Stokes/NSEadaptiv.db "SELECT DISTINCT DOFS_primal, abs(relativeError0) from data "' u 1:2 w  lp lw 3 title ' \footnotesize $J_1$ adaptive', \
		'< sqlite3 Compdata/Navier-Stokes/NSEuniform.db "SELECT DISTINCT DOFS_primal, abs(relativeError0) from data_global "' u 1:2 w  lp  lw 3 title ' \footnotesize $J_1$ uniform', \
		10/x   lw  10 title ' \footnotesize $O(\mathrm{DoFs}^{-1})$', \
		1/(x**0.5)   lw  10 title ' \footnotesize $O(\mathrm{DoFs}^{-\frac{1}{2}})$'	
		#					 '< sqlite3 Data/Multigoalp4/Higher_Order/dataHigherOrderJE.db "SELECT DISTINCT DOFs, abs(Exact_Error) from data "' u 1:2 w  lp lw 3 title ' \footnotesize Error in $J_\mathfrak{E}$', \
	\end{gnuplot}
	\fi
	
	
	\ifMAKEPICS
	\begin{gnuplot}[terminal=epslatex,terminaloptions=color]
		set output "Figures/ErrorEx2J2.tex"
		set title 'Example 3: Errors in $J_2$'
		set key bottom left
		set key opaque
		set logscale y
		set logscale x
		set datafile separator "|"
		set grid ytics lc rgb "#bbbbbb" lw 1 lt 0
		set grid xtics lc rgb "#bbbbbb" lw 1 lt 0
		set xlabel '$\mathrm{DoFs}$'
		set format '
		plot \
		'< sqlite3 Compdata/Navier-Stokes/NSEadaptiv.db "SELECT DISTINCT DOFS_primal, abs(relativeError1) from data "' u 1:2 w  lp lw 3 title ' \footnotesize $J_2$ adaptive', \
		'< sqlite3 Compdata/Navier-Stokes/NSEuniform.db "SELECT DISTINCT DOFS_primal, abs(relativeError1) from data_global "' u 1:2 w  lp  lw 3 title ' \footnotesize $J_2$ uniform', \
		200/x   lw  10 title ' \footnotesize $O(\mathrm{DoFs}^{-1})$', \
		100000/(x**2)   lw  10 title ' \footnotesize $O(\mathrm{DoFs}^{-2})$', \
		#					 '< sqlite3 Data/Multigoalp4/Higher_Order/dataHigherOrderJE.db "SELECT DISTINCT DOFs, abs(Exact_Error) from data "' u 1:2 w  lp lw 3 title ' \footnotesize Error in $J_\mathfrak{E}$', \
	\end{gnuplot}
	\fi
	{	\begin{minipage}{0.47\linewidth}
			\scalebox{0.60}{\input{Figures/ErrorEx2J12.tex}}
			\caption{Example 3: Relative errors for adaptive and uniform refinement for $J_1$. \label{fig: Ex3: Error J1}}
		\end{minipage}\hfill
		\begin{minipage}{0.47\linewidth}
			\scalebox{0.60}{\input{Figures/ErrorEx2J22.tex}}
			\caption{Example 3: Relative errors for adaptive and uniform refinement for $J_2$. \label{fig: Ex3: Error: J2}}
		\end{minipage}	
	}	
	
\end{figure}

\begin{figure}[H]
	\ifMAKEPICS
	\begin{gnuplot}[terminal=epslatex,terminaloptions=color]
		set output "Figures/ErrorEx2J3.tex"
		set title 'Example 3: Errors in $J_3$'
		set key bottom left
		set key opaque
		set logscale y
		set logscale x
		set datafile separator "|"
		set grid ytics lc rgb "#bbbbbb" lw 1 lt 0
		set grid xtics lc rgb "#bbbbbb" lw 1 lt 0
		set xlabel '$\mathrm{DoFs}$'
		set format '
		plot \
		'< sqlite3 Compdata/Navier-Stokes/NSEadaptiv.db "SELECT DISTINCT DOFS_primal, abs(relativeError2) from data "' u 1:2 w  lp lw 3 title ' \footnotesize $J_3$ adaptive', \
		'< sqlite3 Compdata/Navier-Stokes/NSEuniform.db "SELECT DISTINCT DOFS_primal, abs(relativeError2) from data_global "' u 1:2 w  lp  lw 3 title ' \footnotesize $J_3$ uniform', \
		100/x   lw  10 title ' \footnotesize $O(\mathrm{DoFs}^{-1})$', \
		1000000/(x**2)   lw  10 title ' \footnotesize $O(\mathrm{DoFs}^{-2})$'	
		#					 '< sqlite3 Data/Multigoalp4/Higher_Order/dataHigherOrderJE.db "SELECT DISTINCT DOFs, abs(Exact_Error) from data "' u 1:2 w  lp lw 3 title ' \footnotesize Error in $J_\mathfrak{E}$', \
	\end{gnuplot}
	\fi
	
	
	\ifMAKEPICS
	\begin{gnuplot}[terminal=epslatex,terminaloptions=color]
		set output "Figures/ErrorEx2J4.tex"
		set title 'Example 3: Errors in $J_4$'
		set key bottom left
		set key opaque
		set logscale y
		set logscale x
		set datafile separator "|"
		set grid ytics lc rgb "#bbbbbb" lw 1 lt 0
		set grid xtics lc rgb "#bbbbbb" lw 1 lt 0
		set xlabel '$\mathrm{DoFs}$'
		set format '
		plot \
		'< sqlite3 Compdata/Navier-Stokes/NSEadaptiv.db "SELECT DISTINCT DOFS_primal, abs(relativeError3) from data "' u 1:2 w  lp lw 3 title ' \footnotesize $J_4$ adaptive', \
		'< sqlite3 Compdata/Navier-Stokes/NSEuniform.db "SELECT DISTINCT DOFS_primal, abs(relativeError3) from data_global "' u 1:2 w  lp  lw 3 title ' \footnotesize $J_4$ uniform', \
		10/x   lw  10 title ' \footnotesize $O(\mathrm{DoFs}^{-1})$', \
		0.3/(x**0.5)   lw  10 title ' \footnotesize $O(\mathrm{DoFs}^{-\frac{1}{2}})$'
		#					 '< sqlite3 Data/Multigoalp4/Higher_Order/dataHigherOrderJE.db "SELECT DISTINCT DOFs, abs(Exact_Error) from data "' u 1:2 w  lp lw 3 title ' \footnotesize Error in $J_\mathfrak{E}$', \
	\end{gnuplot}
	\fi
	{	\begin{minipage}{0.47\linewidth}
			\scalebox{0.60}{\input{Figures/ErrorEx2J32.tex}}
			\caption{Example 3: Relative errors for adaptive and uniform refinement for $J_3$. \label{fig: Ex3: Error J3}}
		\end{minipage}\hfill
		\begin{minipage}{0.47\linewidth}
			\scalebox{0.60}{\input{Figures/ErrorEx2J42.tex}}
			\caption{Example 3: Relative errors for adaptive and uniform refinement for $J_4$. \label{fig: Ex3: Error: J4}}
		\end{minipage}	
	}	
	
\end{figure}

\subsection{Parabolic p-Laplace Initial-Boundary Value Problems}\label{Subsec: Example4}
For our final application, we consider a non-linear evolutionary PDE, in particular, 
the regularized parabolic $p$-Laplace PDE.
Let $\Omega \subset \mathbb{R}^d$ again denote our spatial domain, and let $T > 0$ be the final time horizon.
To illustrate the performance of the space-time multi-goal oriented finite element method, we perform numerical experiments for $d=2$ and $d=3$. Here we consider problems with spatial singularities, but that are smooth in time. Depending on the goal functionals, we expect refinement not only in the domains of interest, but also towards the spatial singularities. In particular, we consider the classical L-shaped domain for the 2D+1 ($d=2$) case, 
and the Fichera corner domain for the 3D+1 ($d=3$) case. 

\subsubsection{Strong and Weak Problem Statements}
The problem reads: Find $u$ such that
\begin{gather}\label{eq:ex4:pde}
	\partial_t u - \operatorname{div}_x( (|\nabla_x u|^2 + \epsilon^2)^{(p-2)/2} \nabla_x u) = f\text{ in } Q = \Omega \times (0,T),\\
	{u = 0 \text{ on } \Sigma_0=\Omega\times\{0\} \quad\text{and}\quad u = 0\text{ on } 
	\Sigma=\partial \Omega\times(0,T).}
	\label{eq:ex4:pde:ibc}
\end{gather}
Note that this form of the regularized $p$-Laplacian is motivated by the Carreau model. In the numerical experiment, we choose $p=4$ and $\epsilon = 10^{-2}$. 
We can formulate the above problem again in the abstract framework introduced in Section~\ref{sec_notation_abstract_setting}. Indeed, with the choices $ V = L_p(0,T; \mathring{W}^{1}_{p}(\Omega))$ and 
$U = \{ v \in V: \partial_t v \in V^*,  v = 0\ \text{on}\ \Sigma_0\} $,
we 
arrive at the operator equation: Find $u \in U$ such that
\begin{equation}
\label{eq:ex4:OperatorEquation}
	\mathcal{A}(u) = 0\quad \mbox{in}\; V^*,
\end{equation}
where 
\begin{equation*}
	\langle \mathcal{A}(u), v \rangle = \langle\partial_t u, v\rangle + \langle (|\nabla_x u|^2 + \epsilon^2)^{(p-2)/2} \nabla_x u, \nabla_x v \rangle  -  \langle f, v\rangle \quad \mbox{for all}\, v\in V,
\end{equation*}
$ L_p(0,T; X) $ denotes the Bochner space of $L_p$-functions that map from the interval $(0,T)$ to $X$, and $\mathring{W}^{1}_{p}(\Omega) = \{v \in L_p(\Omega): 
\nabla v  \in (L_p(\Omega))^d, v = 0 \ \text{on}\ \partial \Omega \}$.
The existence and uniqueness of the solution of the
operator equation \eqref{eq:ex4:OperatorEquation} follow from standard monotonicity arguments;
see, e.g.,\ \cite{Zeidler:1990a,Roubicek:2013a}.

\subsubsection{Space-Time Finite Element Discretization and Non-Linear Solver}
Evolutionary problems like \eqref{eq:ex4:pde}--\eqref{eq:ex4:pde:ibc} are usually solved by the method of lines, i.e.\ we perform semi-discretization separately for the spatial domain $\Omega$ and the time interval $(0,T)$. An alternative ansatz is an all-at-once discretization, i.e.\ we directly discretize the space-time cylinder $Q\subset \mathbb{R}^{d+1}$. In this work, we will focus on the latter approach. We will briefly describe the key points, for more details on space-time methods see, e.g.,~ 
\cite{LaStein19}. 
Starting with a space-time Galerkin ansatz with finite dimensional subspaces  
$ U_h \subset U $ and $ V_h = U_h \subset V $,
we arrive at the discretized problem: Find $u_h \in U_h $ such that
\begin{equation*}
	\langle \mathcal{A}(u_h), v_h \rangle = 0 \quad \text{for all}\ v_h \in V_h.
\end{equation*}
The above discrete problem has a unique solution; see \cite{endtmayer2023goal} and the references therein. Finally, in order to solve the non-linear systems of equations, we again apply Newton's method, where in each iteration, we have to solve the linear problem: Find $ w_h \in U_h $ such that
\begin{equation*}
	\langle \mathcal{A}'(u_h)w_h, v_h \rangle = -\langle \mathcal{A}(u_h), v_h \rangle \quad \text{for all}\ v_h \in V_h.
\end{equation*}
The above equation is uniquely solvable. Hence, Newton's method is well defined; see \cite{endtmayer2023goal} and the references therein.

We apply Algorithm~\ref{Alg: adapt. multigoal Algorithm}, i.e.\ the enriched primal and adjoint problems are solved by Newton's method and preconditioned GMRES, respectively.

We implemented the space-time finite element method using the finite element library MFEM \cite{mfem:2021}. We use a $P_1$ conforming ansatz space for the 2D+1 ($d=2$) and 3D+1 ($d=3$) case, i.e.\ we use a simplicial decomposition $ \mathcal{T}_h $ of the $(d+1)$-dimensional space-time cylinder $Q \subset \mathbb{R}^{d+1}$.

\subsubsection{Goal Functionals and Combined Goal Functional}
We consider two goal functionals:
\begin{itemize}
	\item Let $\Omega_I := (-15/16,-11/16)\times (11/16,15/16)$ and 		
		  $\Omega_I := (11/16,15/16)^3$ for $d=2$ and $d=3$, respectively, and
	\begin{equation*}
		J_1(u) := \int_{0.5}^{0.75}\!\int_{\Omega_I}\! |\nabla u(x,t)|^4 \;\mathrm{d}x\mathrm{d}t,
	\end{equation*}
	i.e.\ we are interested in the energy in some subdomain far away from the corner singularity, and
	\item the ``average'' of the solution at final time $T$, i.e.
	\begin{equation*}
		J_2(u) = \int_\Omega\! u(x,T) \;\mathrm{d}x.
	\end{equation*}
\end{itemize}
As an error weighting function, we choose the function presented in \eqref{eq:error weighting function:example 1}. 
Thus, our combined functional has the form
\begin{equation*}
	J_{\mathfrak{E}}(v) = \frac{|J_1(u_h^{(2)}) - J_1(v)|}{J_1(u_h)} + \frac{|J_2(u_h^{(2)}) - J_2(v)|}{J_2(u_h)}.
\end{equation*}
\begin{figure}[htb]
	\centering%
	\begin{tikzpicture}[scale=2]
		\draw (0,0) -- (1,0) -- (1,1) -- (-1,1) -- (-1,-1) -- (0,-1) -- (0,0);
		\fill[red,opacity=.5] (-15/16,15/16) -- (-15/16,11/16) -- (-11/16,11/16) -- (-11/16,15/16);
		\draw[->,thin,red] (-1.15, -1.15) -- (-0.65,-1.15) node[right,scale=0.5] {$x_1$};
		\draw[->,thin,blue] (-1.15, -1.15) -- (-1.15,-0.65) node[right,scale=0.5] {$x_2$};
	\end{tikzpicture}
	\hspace*{2em}
	\tdplotsetmaincoords{75}{30}
	\begin{tikzpicture}[tdplot_main_coords,scale=1.75]
		\begin{scope}[canvas is xz plane at y=11/16, fill=red!95!white]
			\fill (11/16, 11/16) -- (11/16,15/16) -- (15/16,15/16) -- (15/16,11/16) -- cycle;
		\end{scope}
		\begin{scope}[canvas is yz plane at x=15/16, fill=red!75!gray]
			\fill (11/16, 11/16) -- (11/16,15/16) -- (15/16,15/16) -- (15/16,11/16) -- cycle;
		\end{scope}
		\begin{scope}[canvas is xy plane at z=15/16, fill=red!50!white]
			\fill (11/16, 11/16) -- (11/16,15/16) -- (15/16,15/16) -- (15/16,11/16) -- cycle;
		\end{scope}

		\coordinate (0) at (0,-1,-1) [scale=.25] {(0) };
		\coordinate (1) at (1,-1,-1) [scale=.25] {(1) };
		\coordinate (2) at (-1,0,-1) [scale=.25] {(2) };
		\coordinate (3) at (0,0,-1) [scale=.25] {(3) };
		\coordinate (4) at (1,0,-1) [scale=.25] {(4) };
		\coordinate (5) at (-1,1,-1) [scale=.25] {(5) };
		\coordinate (6) at (0,1,-1) [scale=.25] {(6) };
		\coordinate (7) at (1,1,-1) [scale=.25] {(7) };
		\coordinate (8) at (-1,-1,0) [scale=.25] {(8) };
		\coordinate (9) at (0,-1,0) [scale=.25] {(9) };
		\coordinate (10) at (1,-1,0) [scale=.25] {(10)};
		\coordinate (11) at (-1,0,0) [scale=.25] {(11)};
		\coordinate (12) at (0,0,0) [scale=.25] {(12)};
		\coordinate (14) at (-1,1,0) [scale=.25] {(14)};
		\coordinate (16) at (1,1,0) [scale=.25] {(16)};
		\coordinate (17) at (-1,-1,1) [scale=.25] {(17)};
		\coordinate (18) at (0,-1,1) [scale=.25] {(18)};
		\coordinate (19) at (1,-1,1) [scale=.25] {(19)};
		\coordinate (20) at (-1,0,1) [scale=.25] {(20)};
		\coordinate (22) at (1,0,1) [scale=.25] {(22)};
		\coordinate (23) at (-1,1,1) [scale=.25] {(23)};
		\coordinate (24) at (0,1,1) [scale=.25] {(24)};
		\coordinate (25) at (1,1,1) [scale=.25] {(25)};

		\draw (17) -- (18) -- (19) -- (22) -- (25) -- (24) -- (23) -- (20) -- (17);
		\draw (19) -- (10) -- (1) -- (4) -- (7) -- (16) -- (25);
		\draw[densely dotted] (23) -- (14) -- (5) -- (6) -- (7);
		\draw[] (17) -- (8) -- (9) -- (0) -- (1);
		\draw[densely dotted] (9) -- (12);
		\draw[densely dotted] (5) -- (2);
		\draw[densely dotted] (8) -- (11) -- (12) -- (3) -- (0);
		\draw[densely dotted] (11) -- (2) -- (3);

		\draw[->,thin,red] (-1,-1,-1) -- (-0.5,-1,-1) node[right,scale=0.5] {$x_1$};
		\draw[->,thin,blue] (-1,-1,-1) -- (-1,-0.5,-1) node[right,scale=0.5] {$x_2$};
		\draw[->,thin,green] (-1,-1,-1) -- (-1,-1,-0.5) node[left,scale=0.5] {$x_3$};
	\end{tikzpicture}
	\caption{Example 4: Spatial integration domains $ \Omega_I$ (red) for the energy functional $J_1$, for $d=2$ (left) and $d=3$ (right)}\label{fig:ex3:domains}
\end{figure}
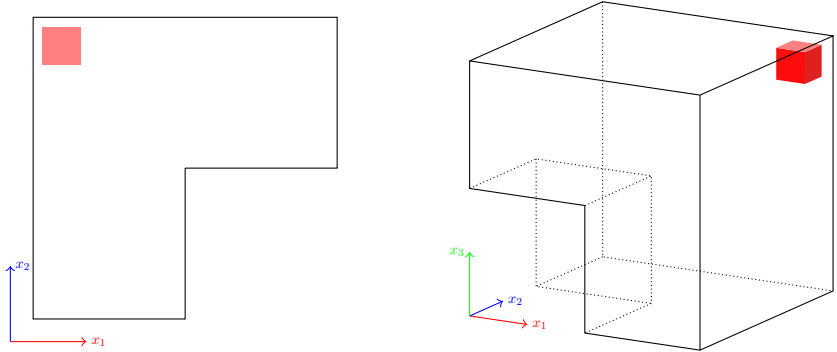

\subsubsection{L-shaped: Discussion and Interpretation of Our Findings}
First we consider the L-shaped domain 
$\Omega = (-1,1)^2 \setminus \{ [0,1)\times(-1,0]\}$
as depicted in the left of Figure~\ref{fig:ex3:domains}. Here, we know the exact solution of the corresponding elliptic problem\footnote{See \url{https://math.nist.gov/amr-benchmark/index.html}, 
and select ``L-shaped Domain Homogeneous Boundary Conditions''.} and we manufacture a time-dependent solution for the parabolic 
$p$-Laplace initial-boundary value problem,
which is given by
\begin{equation*}
	u(x_1,x_2,t) = \frac{3}{2} (1-x_1)^2(1-x_2)^2 r^{2/3} \sin(\frac{2}{3}\theta)\,\sin(t), 
\end{equation*}
where $ (r,\theta)$ denote the polar representation of the spatial coordinates $(x_1,x_2)$. The right-hand side is computed accordingly. 

We first consider the efficiency indices as presented in the left of Figure~\ref{fig:ex3:2d+1:efficiency and errors}. Here, we observe that after some initial oscillations, the combined efficiency index $I_{\mathrm{eff}} $ is very close to $1$. These initial oscillations most likely occur as the integration domain $Q_I = \Omega_I \times (0.5,0.75)$ is not exactly captured in the initial mesh; see Figure~\ref{fig:ex3:2d+1:meshes} (top right). Next, we consider the convergence of the error for the functionals. In the right of Figure~\ref{fig:ex3:2d+1:efficiency and errors}, we observe that the error in the combined functional $ J_{\mathfrak{E}} $ and the error in the ``average'' $J_2$ converge with almost $\mathcal{O}(\mathrm{DoFs}^{-2/3})$, i.e.\ quadratic. While the error for the ``energy'' integral $ J_1$ is more oscillatory, we observe an overall almost quadratic convergence rate of around $\mathcal{O}(\mathrm{DoFs}^{-2/3})$. In Figure~\ref{fig:ex3:2d+1:error J1 and J2}, we present the error of the individual functionals, and compare the adaptive refinements with uniform refinements. We again observe the almost quadratic convergence of the adaptive refinements, while uniform refinements on the one hand need more DoFs to reach a specific error, and on the other hand have a worse convergence rate in the pre-asymptotic range.

\begin{figure}[!htb]
	\centering%
	\includegraphics[width=.95\linewidth]{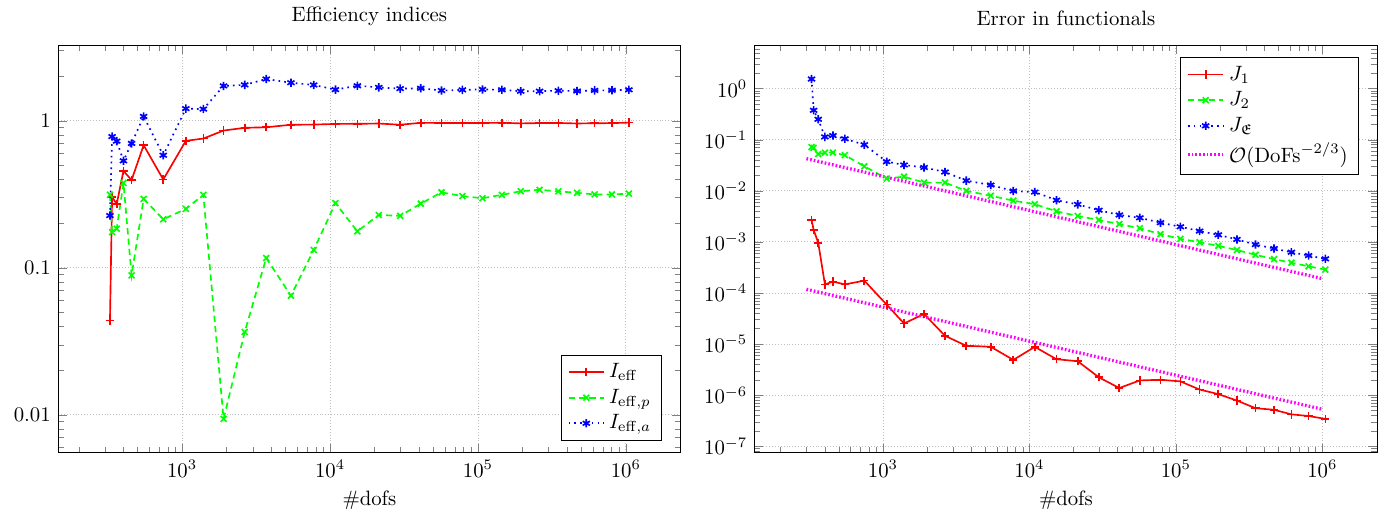}
	\caption{Example~4: Efficiency indices and error convergence.}\label{fig:ex3:2d+1:efficiency and errors}
\end{figure}

\begin{figure}[!htb]
	\centering%
	\includegraphics[width=.95\linewidth]{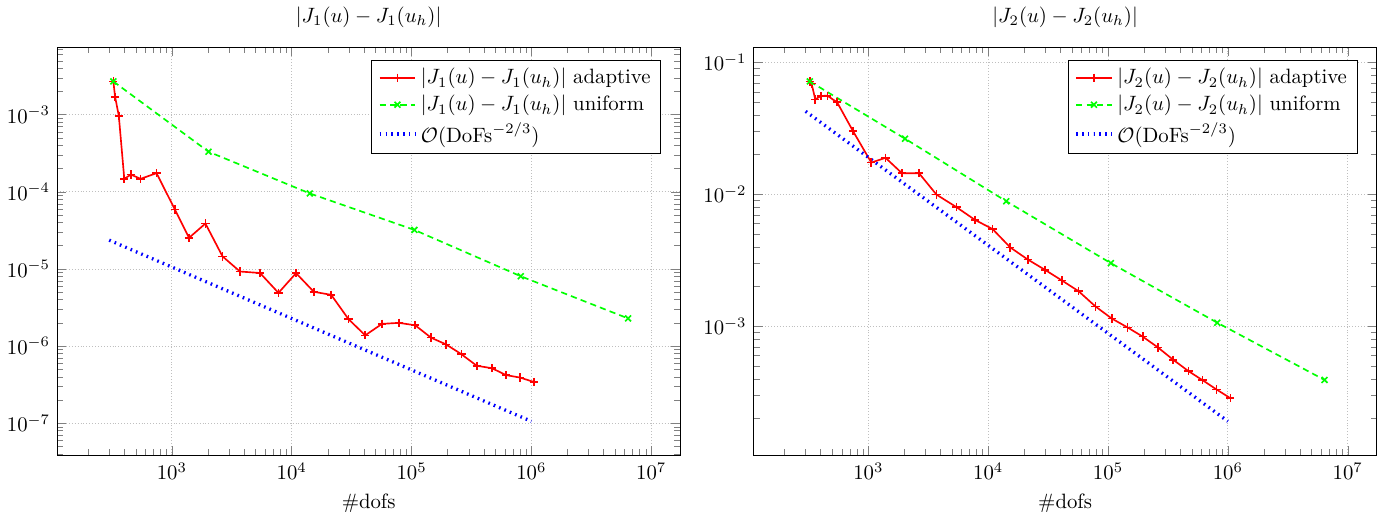}
	\caption{Example~4: Error convergence for the functionals and comparison with uniform refinements.}\label{fig:ex3:2d+1:error J1 and J2}
\end{figure}

Next, we consider the meshes obtained from the adaptive algorithm. In Figure~\ref{fig:ex3:2d+1:meshes}, we present the space-time mesh in the first two rows, and three cuts with the $(x_1,x_2)$-plane in the bottom row. In the upper two rows, we additionally present intersections of the space-time cylinder with the $(x_1,t)$-plane at $x_2 = 13/16$ and with the $(x_2,t)$-plane at $ x_1 = -13/16$ in the right column. We indicate these cuts also in the left columns by thick red lines. In the upper row, we show the initial meshes, and in the middle row the meshes obtained after 18 adaptive refinements using Algorithm~\ref{Alg: adapt. multigoal Algorithm}. As expected, we observe that the top of the space-time cylinder is refined, with special focus on the corner singularity. Moreover, we observe refinements toward the integration domain $Q_I$ of $J_2$, which is indicated with red shading. Since $\Omega_I$ is quite far away from the singularity, the pollution effect is quite mild. This can also be observed in the bottom row of Figure~\ref{fig:ex3:2d+1:meshes}, as the refinements towards the re-entrant corner are far less in the first two columns compared with the third column. From left to right, the cuts are located at $t=0.5$, $ t = 0.625$, and $t = 1$, respectively.

\begin{figure}[!htb]
	\centering%
	\includegraphics[width=.495\linewidth]{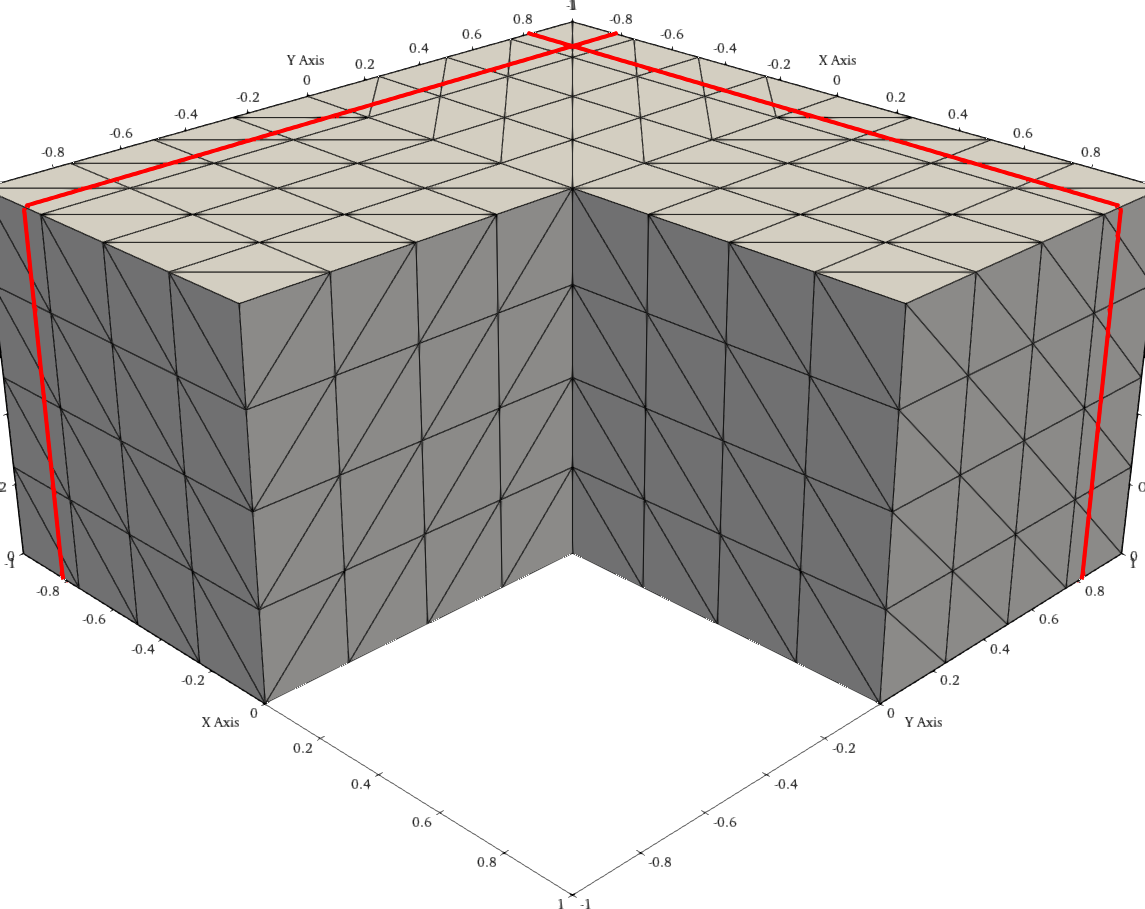}%
	\hfill%
	\includegraphics[width=.495\linewidth]{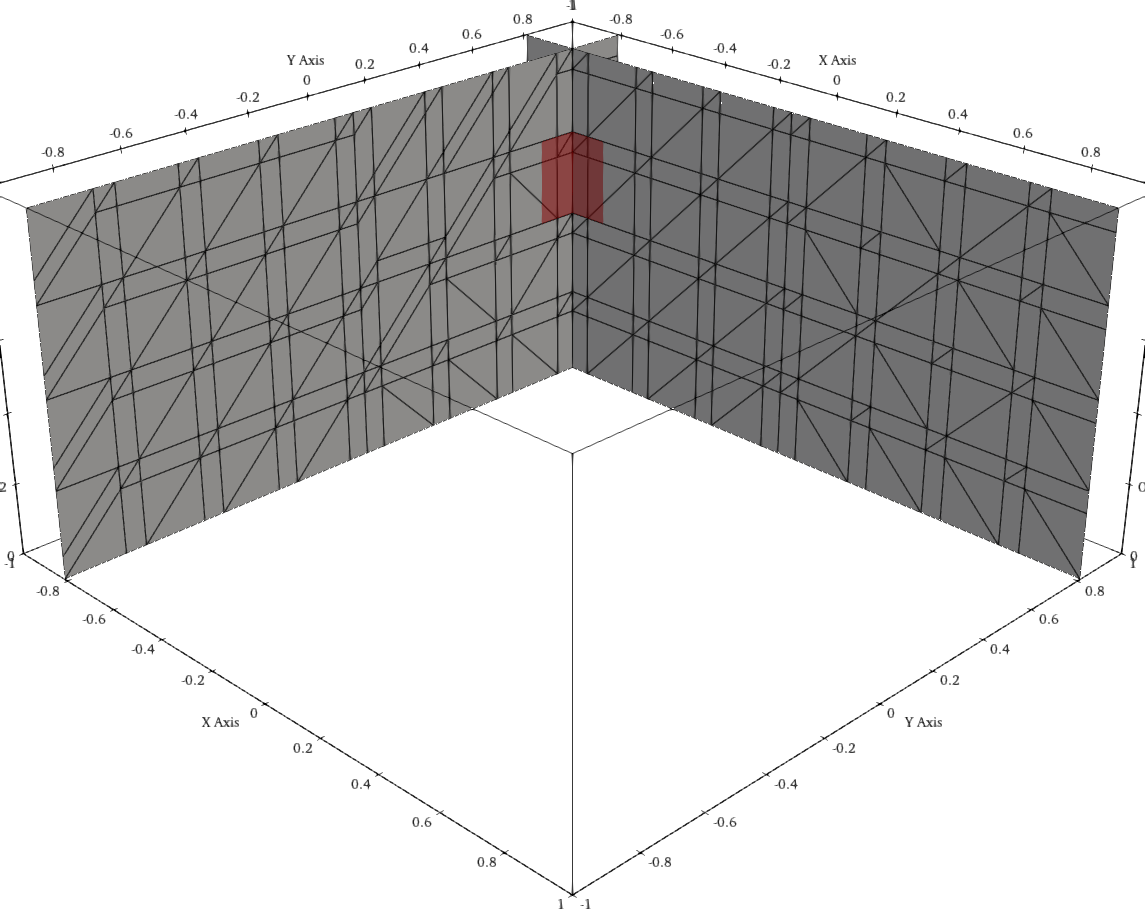}%
	\hfill%
	\includegraphics[width=.495\linewidth]{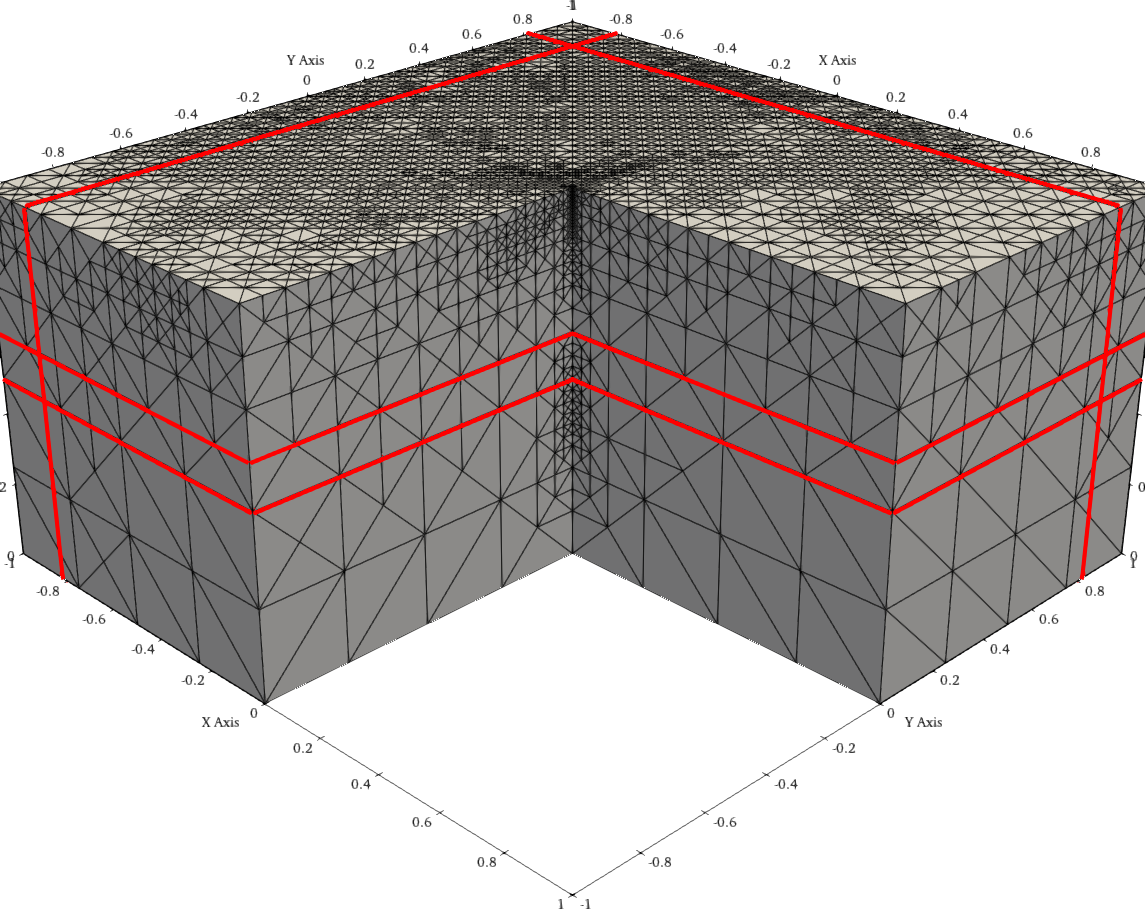}%
	\hfill%
	\includegraphics[width=.495\linewidth]{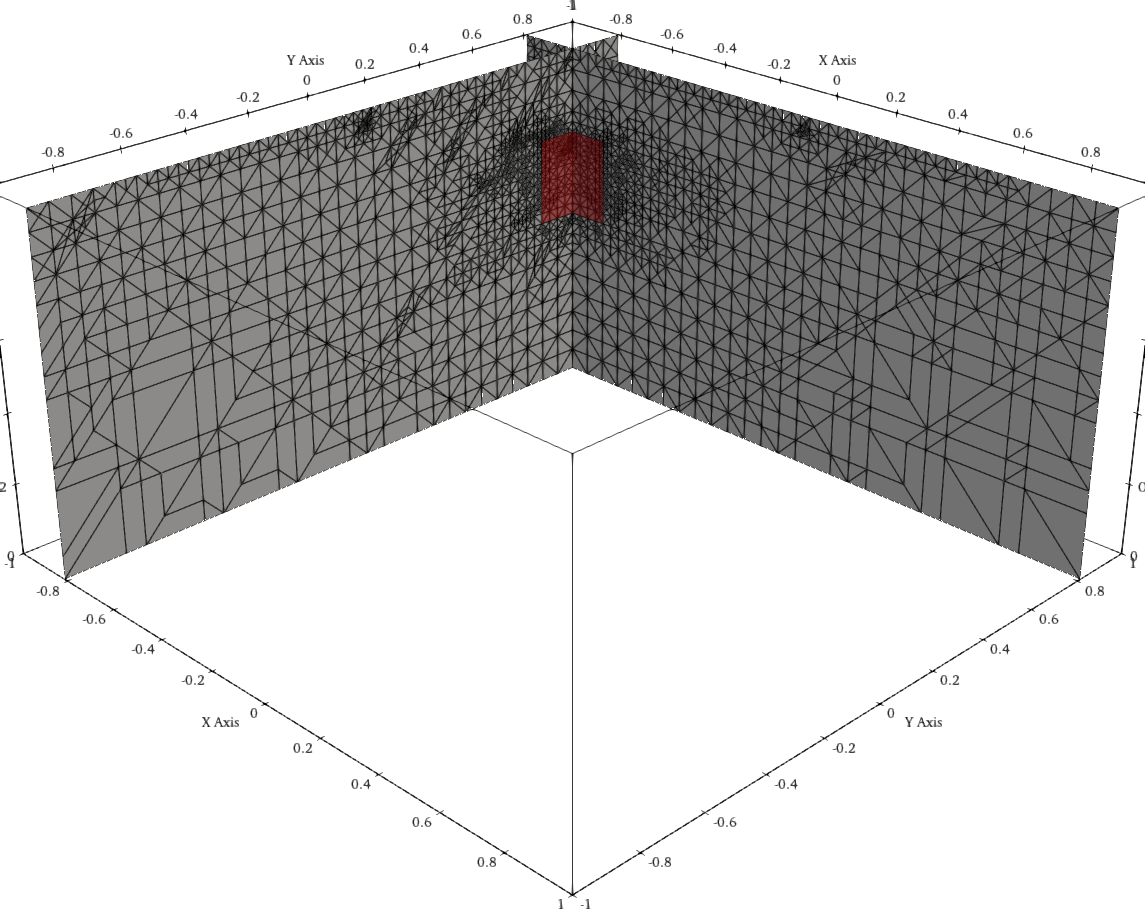}%
	\hfill%
	\includegraphics[width=.3295\linewidth]{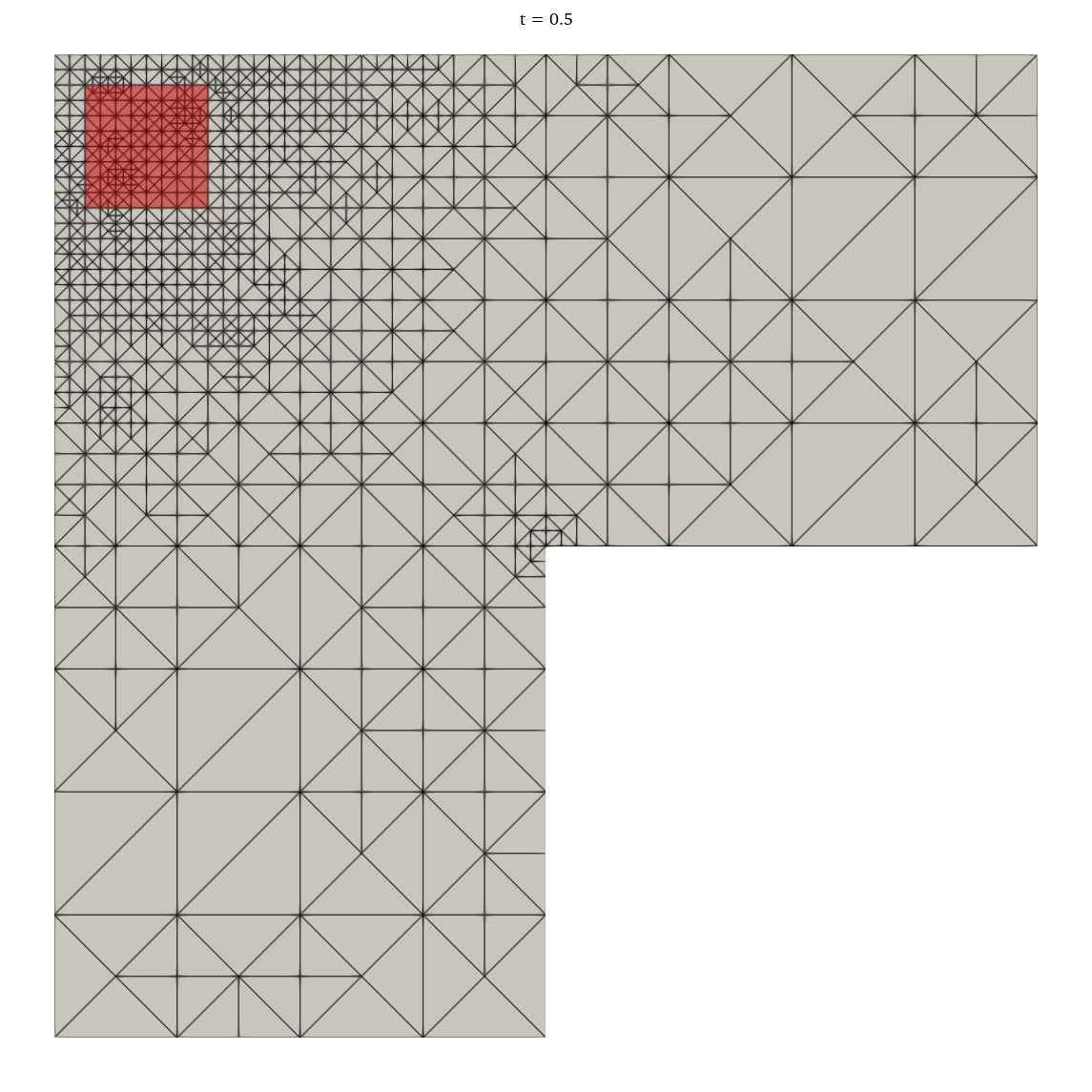}%
	\hfill%
	\includegraphics[width=.3295\linewidth]{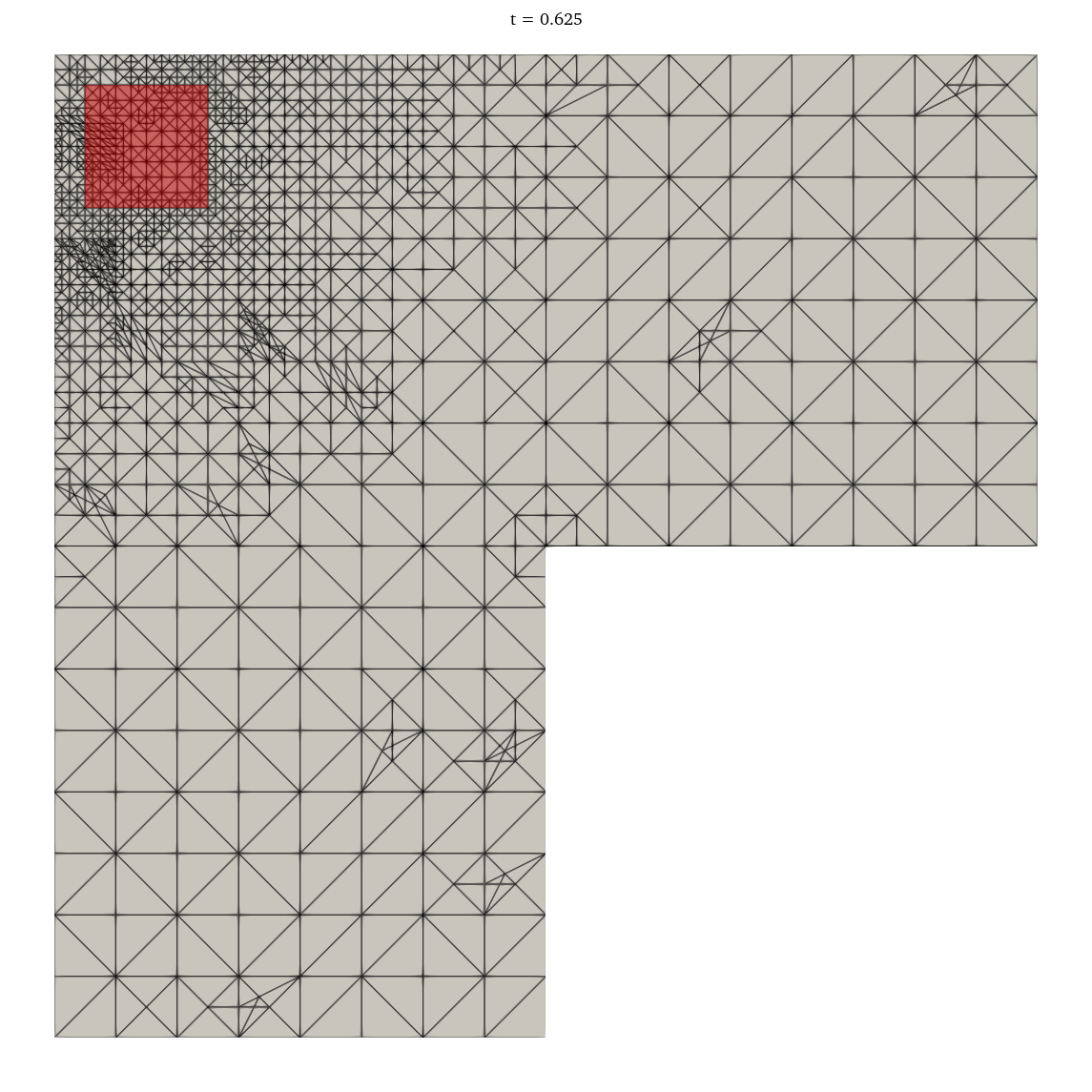}%
	\hfill%
	\includegraphics[width=.3295\linewidth]{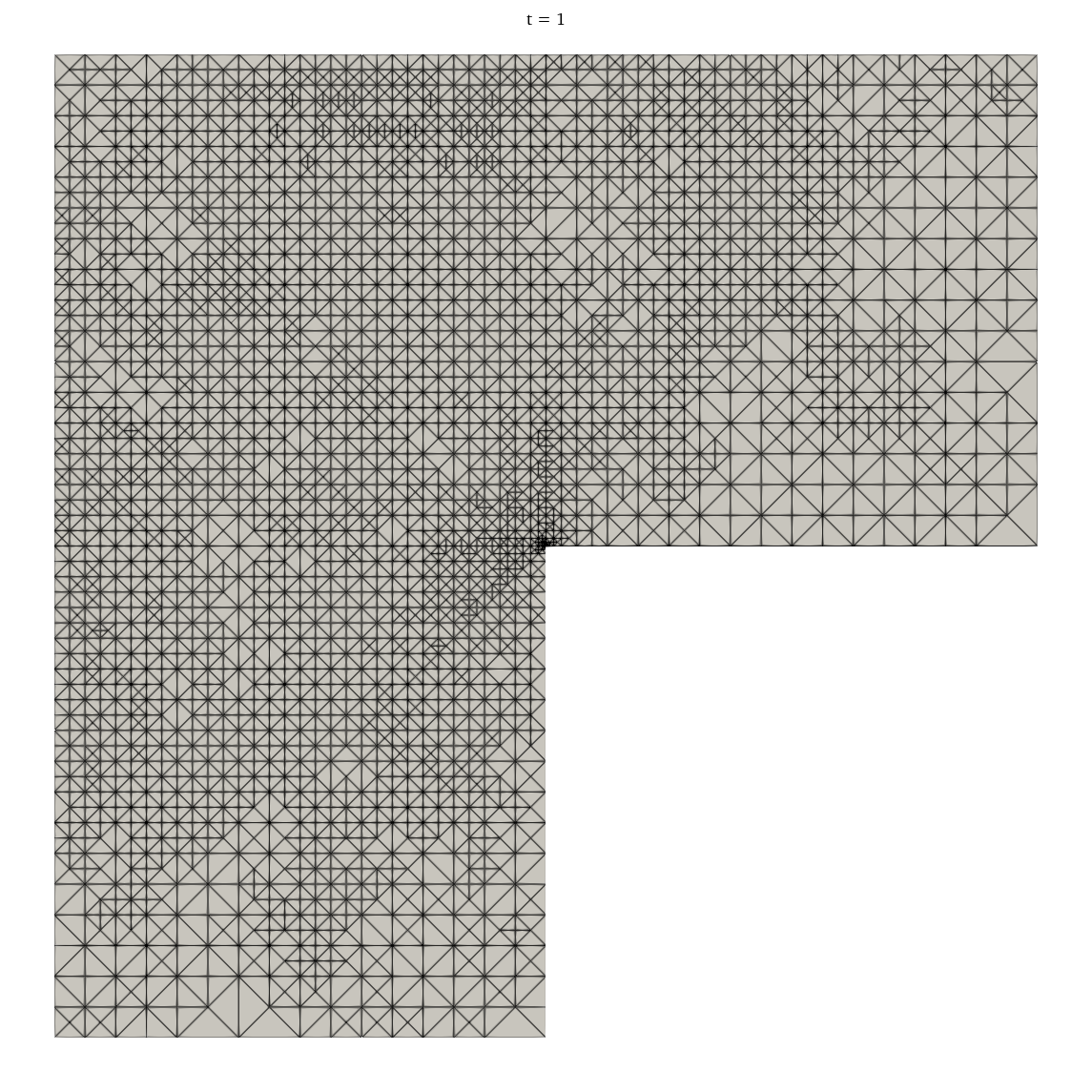}%
	\caption{Example~4: Space-time meshes in the first two rows; $(x_1,x_2)$-planes at $t=0.5$, $t=0.625$ and $t=1$ in the bottom row. The upper row shows the initial mesh, while the remaining meshes are obtained after 18 adaptive refinements using Algorithm~\ref{Alg: adapt. multigoal Algorithm}.}\label{fig:ex3:2d+1:meshes}
\end{figure}

\subsubsection{Fichera Corner: Discussion and Interpretation of Our Findings}
For the Fichera corner $\Omega = (-1,1)^3 \setminus(-1,0]^3$ (see Figure~\ref{fig:ex3:domains} right), in addition to the corner singularity at the origin, there can also be edge singularities along the re-entrant edges. In particular, we choose the right hand side~\footnote{See \url{https://math.nist.gov/amr-benchmark/index.html} and select ``Fichera Corner with Vertex and Edge Singularities'', or see \cite{ApelNicaise1998a}.}
\begin{equation*}
	f(x,t) = \frac{\sin(t)}{\sqrt{x_1^2 + x_2^2 + x_3^3}}.
\end{equation*}
In this case, to the best of our knowledge, there is no explicit form of the solution $u$. Nevertheless, we will again apply our adaptive multi-goal Algorithm~\ref{Alg: adapt. multigoal Algorithm}, with the same functionals as for the L-shape domain. For the first functional, we now use the three-dimensional $ \Omega_I = (11/16, 15/16)^3 $, see the right of Figure~\ref{fig:ex3:domains} for a visualization. Instead of presenting the convergence behavior of the error, we plot the convergence behavior of the error indicator $ \eta_h^{(2)}$. As we have observed in the previous examples, after some refinements $\eta_h^{(2)} $ provides a good estimate for the actual error. 

In Figure~\ref{fig:ex3:3d+1:parts of the error estimator}, we present the parts of the error estimator we use to drive the adaptive mesh refinement. We observe that the primal and 
adjoint parts of the error estimator both decay with a rate of $ \mathcal{O}(\mathrm{DoFs}^{-2/4}) $, i.e. quadratic convergence. In Figure~\ref{fig:ex3:3d+1:functional values J1 and J2}, we compare the convergence history of the functional values obtained by uniform refinements and adaptive refinements. As a reference value for the ``exact'' functional values, we plot the value of the functionals evaluated at the enriched solution $ u_h^{(2)} $ on the finest adaptive mesh.

\newpage
\mbox{}

\begin{figure}[htb]
	\centering%
	\includegraphics[width=.495\linewidth]{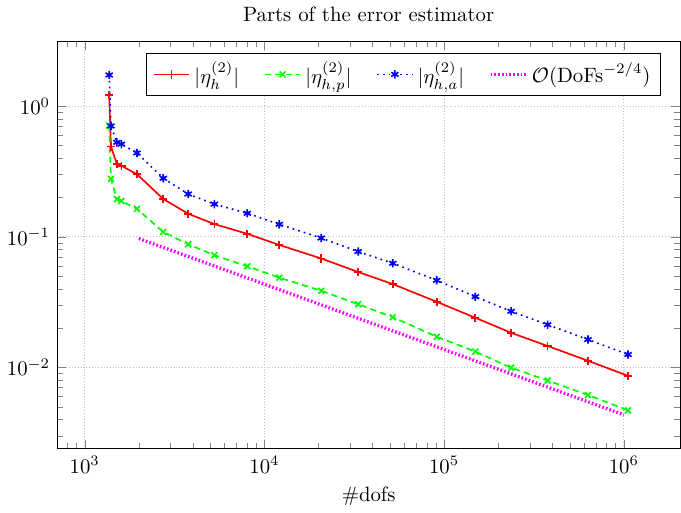}
	\caption{Example 4: Convergence history for the parts of the error estimator.}\label{fig:ex3:3d+1:parts of the error estimator}
\end{figure}

\begin{figure}[htb]
	\centering%
	\includegraphics[width=.95\linewidth]{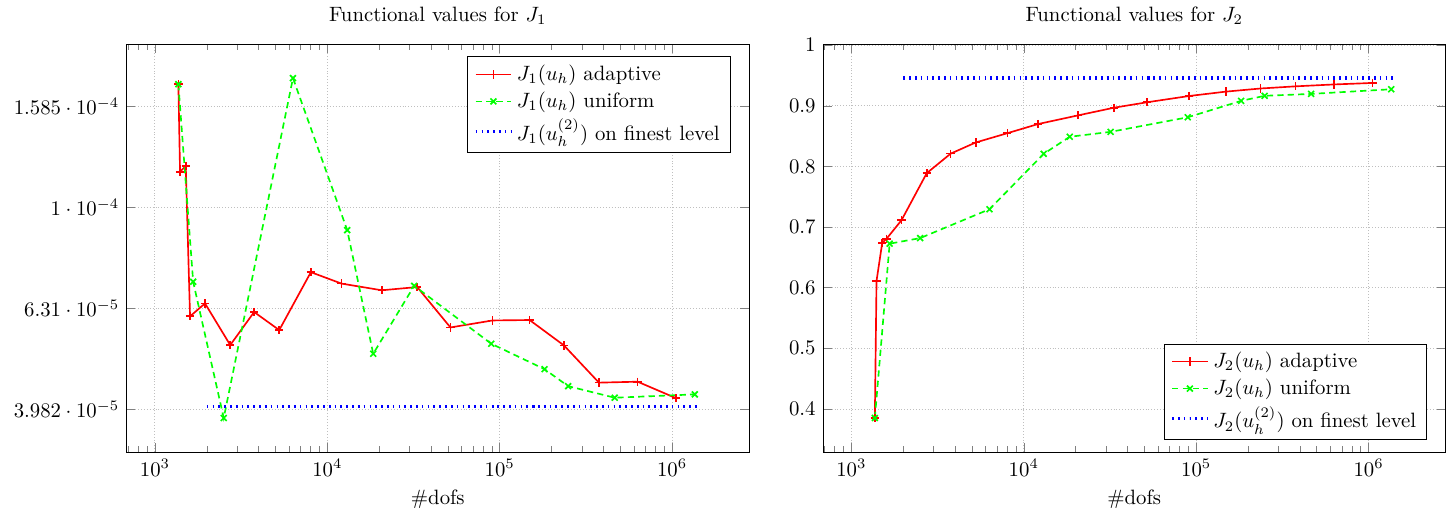}
	\caption{Example 4: Convergence history of the functional values.}\label{fig:ex3:3d+1:functional values J1 and J2}
\end{figure}

Finally, we present intersections of the four dimensional space-time mesh with the $(x_1,x_2,x_3)$-hyper-plane in Figure~\ref{fig:ex3:3d+1:meshes}. From top to bottom, the cuts were performed at $t=0$, $t=0.5$ and $t=1$, respectively. 
In the left column, we present the initial mesh, and in the right column the mesh after $18$ adaptive refinements. We observe that the mesh refinements toward the edge and corner singularities are much more prominent at final time $t=1$ as, e.g.,\ at time $t=0.5$. This behavior is similar to our earlier observations for the 2D+1 case.

\newpage
\mbox{}

\begin{figure}[!htb]
	\centering%
	\includegraphics[width=.4\linewidth]{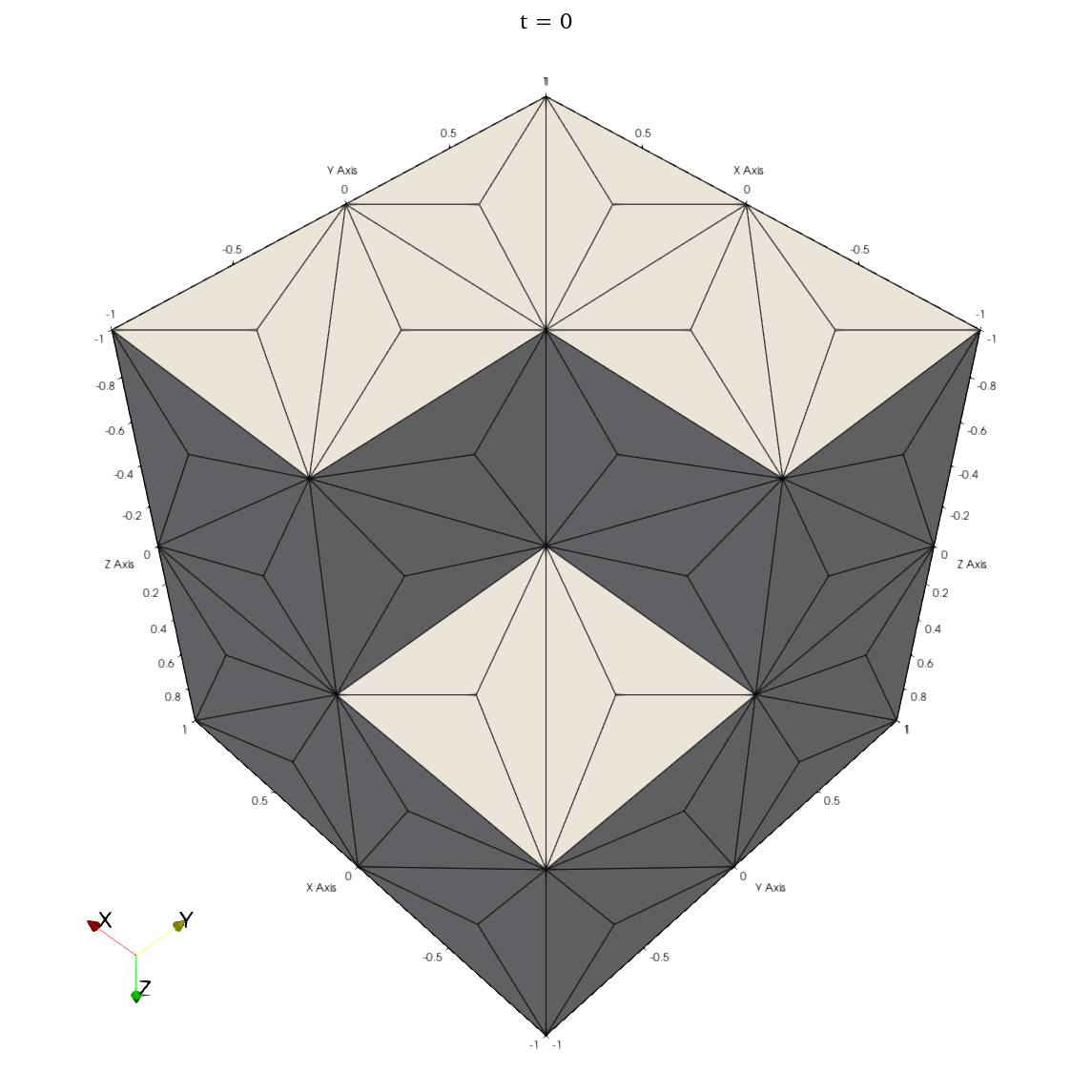}%
	\hspace*{.05\linewidth}%
	\includegraphics[width=.4\linewidth]{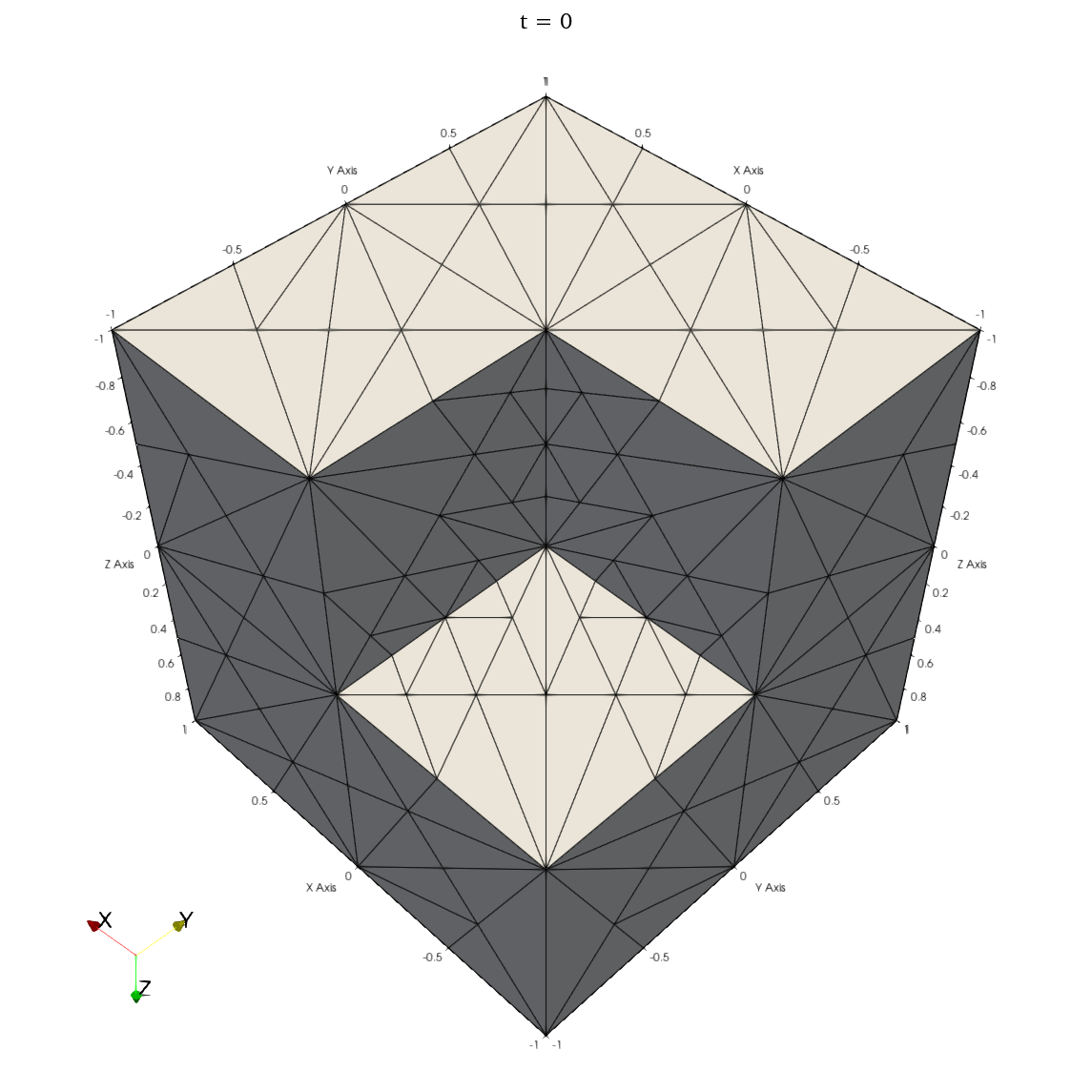}%
	\hfill%
	\includegraphics[width=.4\linewidth]{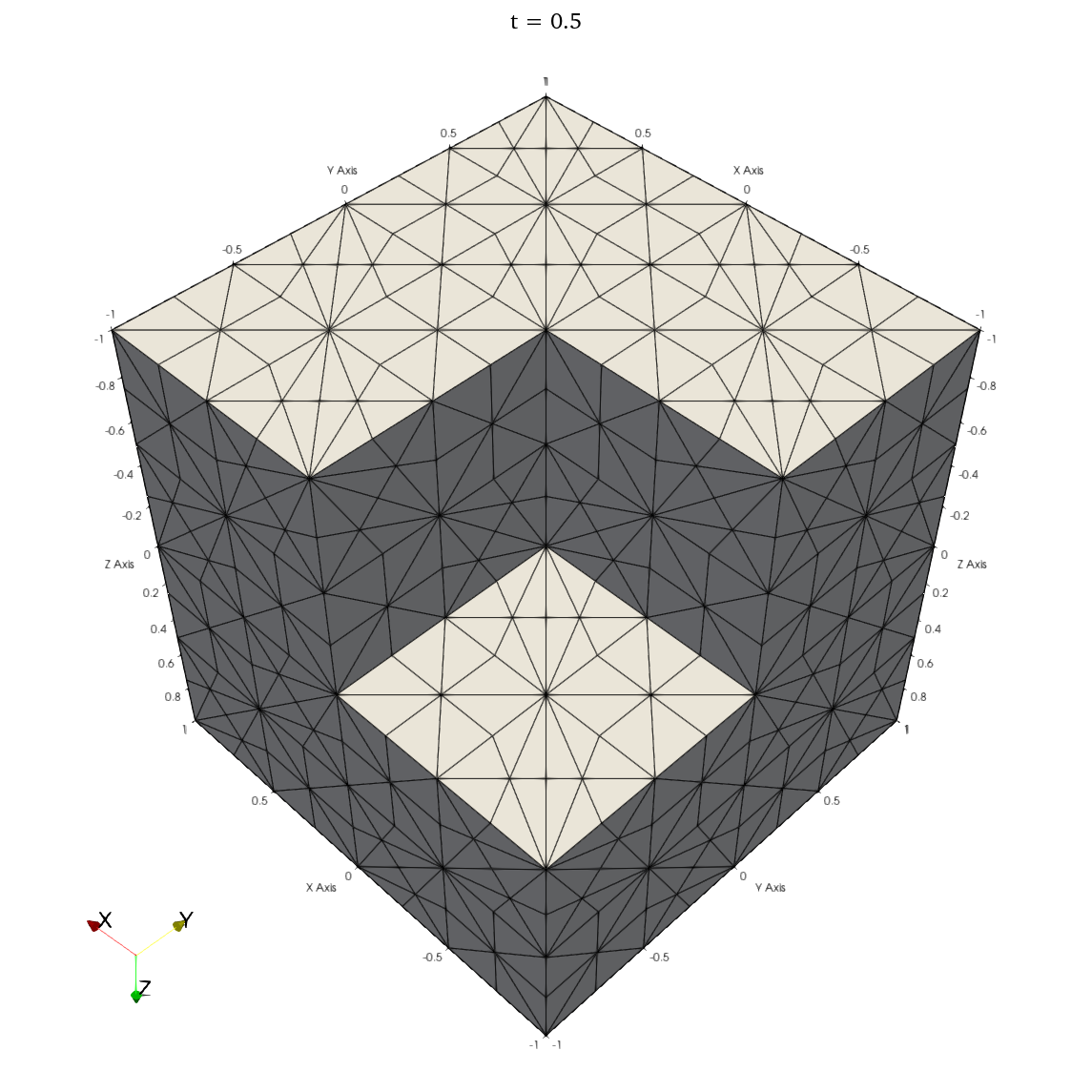}%
	\hspace*{.05\linewidth}%
	\includegraphics[width=.4\linewidth]{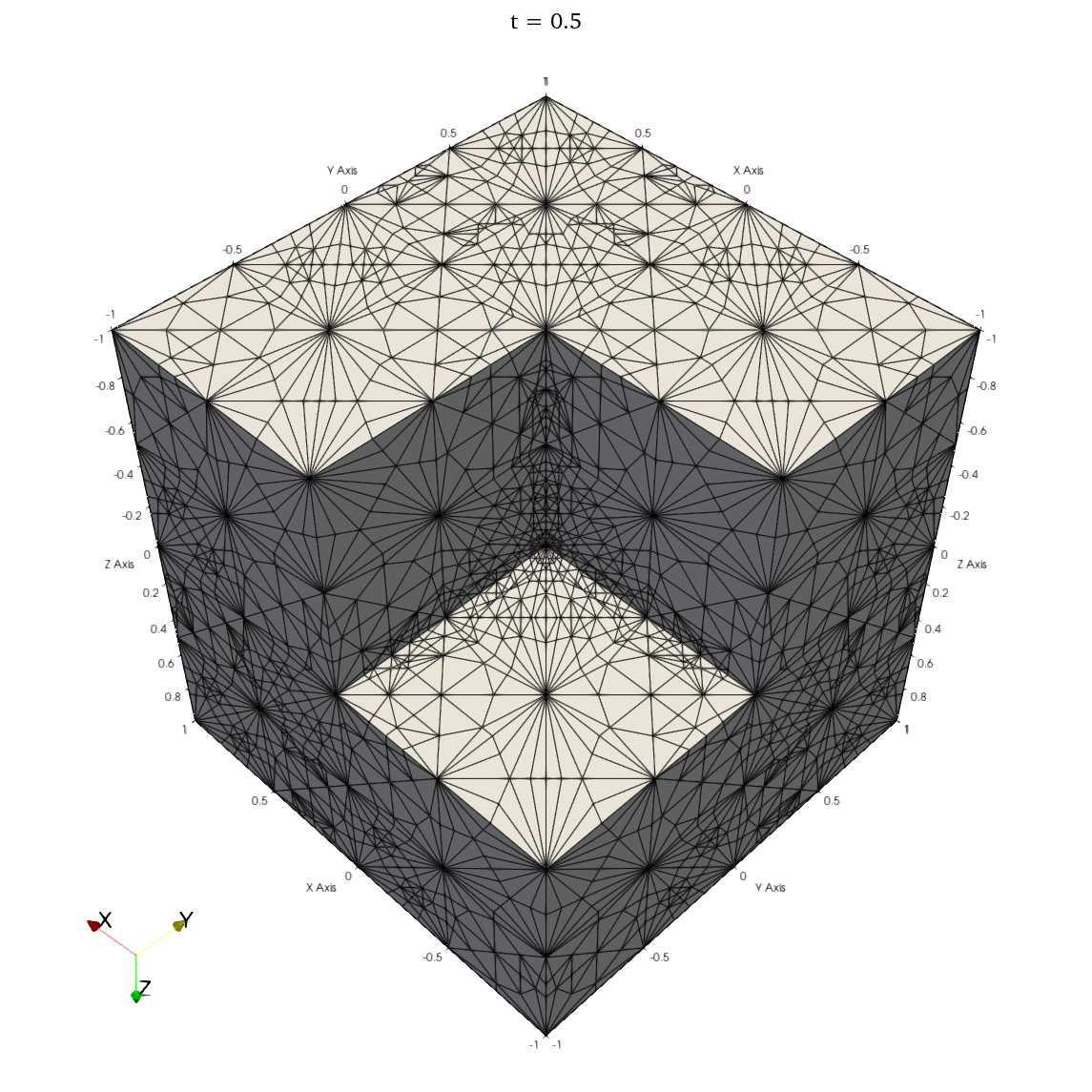}%
	\hfill%
	\includegraphics[width=.4\linewidth]{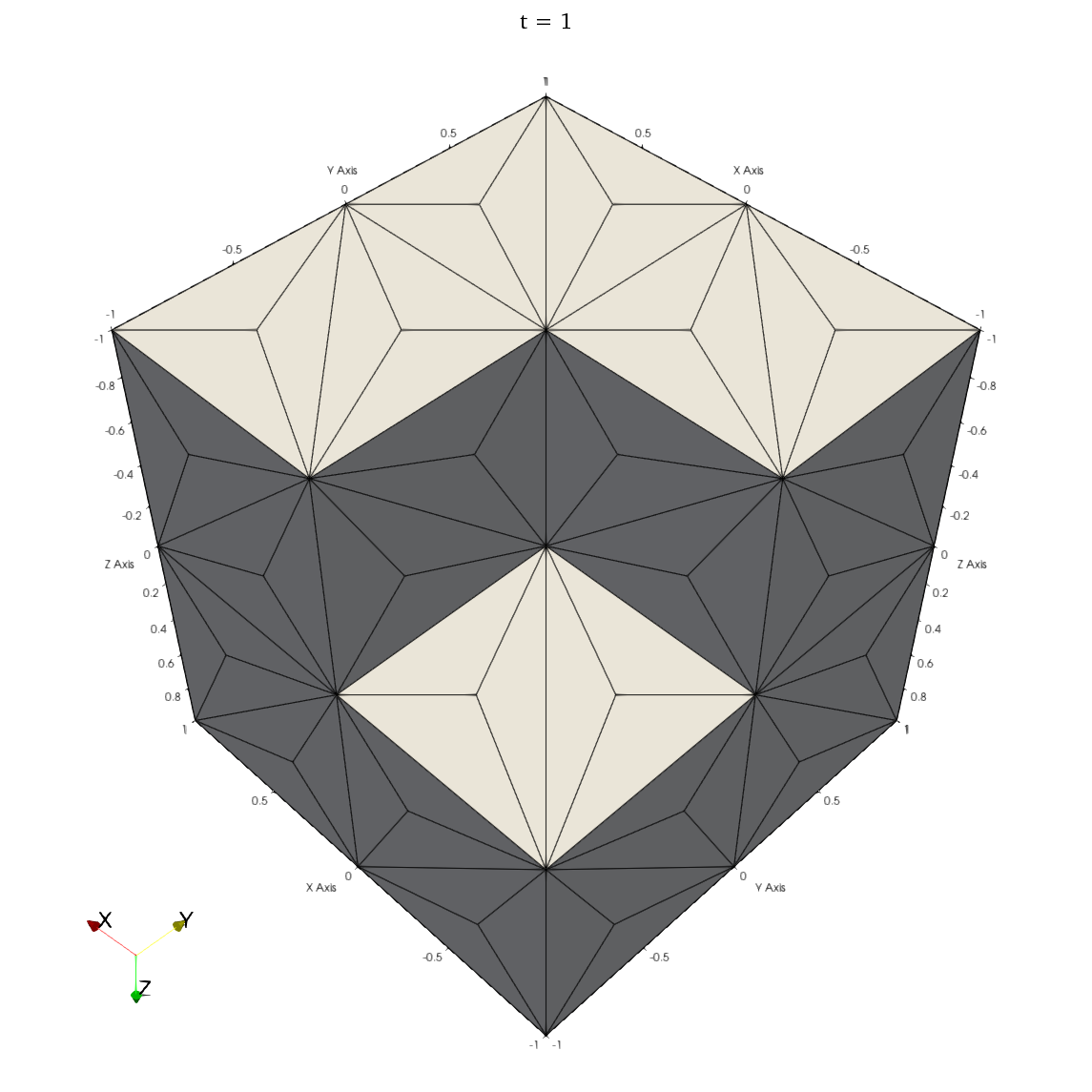}%
	\hspace*{.05\linewidth}%
	\includegraphics[width=.4\linewidth]{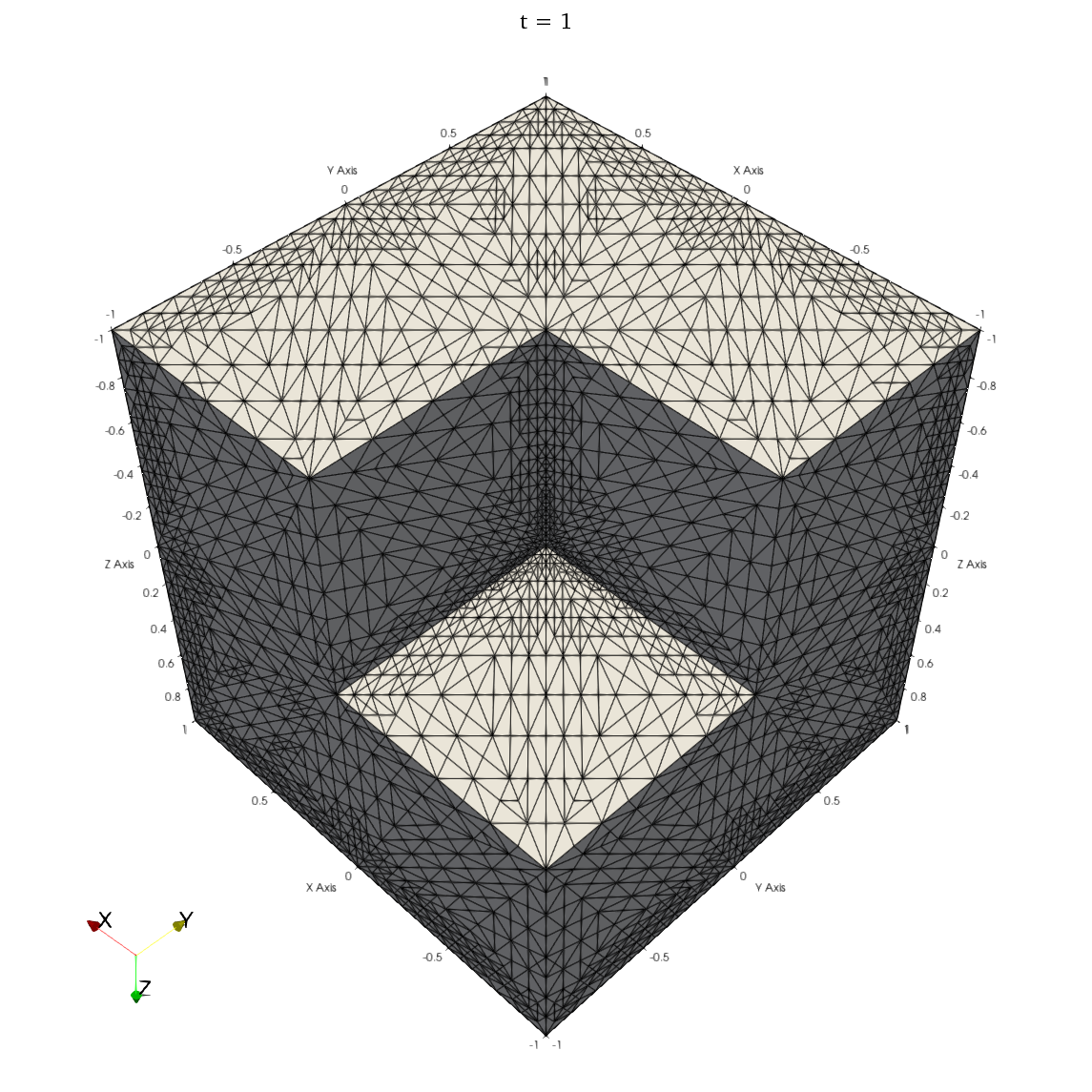}%
	\caption{Example 4: Intersections of the space-time mesh with the $(x_1,x_2,x_3)$-hyper-plane; at $t=0$ (top row), at $t=0.5$ (middle row), and at $t=1$ (bottom row). The left column shows the initial mesh, the right column the mesh after 18 adaptive refinements.}\label{fig:ex3:3d+1:meshes}
\end{figure}

%
\newpage
\section{Conclusions and Outlook}
\label{Sec:Conclusions}
In this work, we reviewed single- and multigoal-oriented error control 
and adaptivity. Our notation was kept in general terms such that 
stationary and non-stationary (space-time) situations are covered. 
First, we explained in detail single goal-oriented error estimation
with the help of the dual-weighted residual method. The error estimators 
cover both discretization and non-linear iteration errors. 
Therein, some new theoretical results were presented as well. Prior 
efficiency and reliability results only require one saturation assumption, rather 
a strengthened condition. Next, we considered non-standard discretizations such 
as stabilization terms and non-consistencies (e.g., classical finite difference based 
time-stepping schemes) or non-conformal methods (e.g., finite differences or neural 
network approximations) of numerical schemes in the frame of 
goal-oriented error estimation.
Third, we concentrated on multigoal-oriented error estimation in which 
we have put a lot of efforts in the last eight years due to advances and increasing interest of solving multiphysics partial differential equations
and coupled variational inequality systems.
Besides theoretical results, we provided several adaptive algorithms
for single and multiple goal functional evaluations.
In order to substantiate our developments, four numerical 
examples were designed and computationally analyzed. In the first 
example, namely Poisson's problem, 
the implementation is fully open-source and follows the classical 
structure of deal.II tutorial steps. Therefore, this example has a flavor 
of educational purpose. In the last example, recent work 
on the space-time PU-DWR method is further extended to multiple-goal functionals 
and singularities in the solution.
Ongoing and future work 
is concerned with the extension to cover more terms in the error estimators 
that besides discretization and non-linear iteration errors, as well
linear iteration errors, model errors, and model order reduction techniques
are covered. In conjunction with high-performance parallel computing these yield
important components to continue to solve efficiently 
and with desired accuracies multiphysics problems with physics-based discretizations and fast physics-based numerical solvers.

\section*{Acknowledgments}
BE and TW have been supported by the Cluster of Excellence PhoenixD (EXC 2122, Project ID 390833453).
Furthermore, BE greatly acknowledges his current Humboldt Postdoctoral 
Fellowship funding. Furthermore, we would like to thank 
the Johann Radon Institute for Computational and  Applied Mathematics
(RICAM) for providing 
computing resource support of the high-performance computing cluster Radon1
\footnote{\url{https://www.oeaw.ac.at/ricam/hpc}}.
%
%
\bibliographystyle{abbrv}
\bibliography{EndtmayerLangerRichterWickSchafelner}
%
\end{document}